\newcommand{\bbN}{\mathbb{N}}
\newcommand{\bbR}{\mathbb{R}}
\newcommand{\bbC}{\mathbb{C}}
\newcommand{\bbCP}{\mathbb{C}\mathrm{P}}
\newcommand{\bbZ}{\mathbb{Z}}
\newcommand{\imi}{\mathbbm{i}}
\newcommand{\matSL}{\mathrm{SL}}
\newcommand{\matSO}{\mathrm{SO}}
\newcommand{\matsl}{\mathrm{sl}}
\newcommand{\matSU}{\mathrm{SU}}
\newcommand{\disk}{\mathcal{D}}
\newcommand{\Pone}{\mathrm{P}^1}
\newcommand{\Sone}{\mathrm{S}^1}
\newcommand{\Stwo}{\mathrm{S}^2}
\newcommand{\Sthree}{\mathrm{S}^3}
\newcommand{\spaceS}{\mathrm{S}^3}
\newcommand{\spaceH}{\mathrm{H}^3}
\newcommand{\spaceADS}{\mathrm{AdS}_3}
\newcommand{\spaceDS}{\mathrm{dS}_3}
\newcommand{\loopgroup}{\Lambda^\ast}
\newcommand{\id}{\mathbbm{1}}
\newcommand{\deriv}{\mathrm{d}}
\newcommand{\half}{\tfrac{1}{2}}
\newcommand{\ol}{\overline}
\newcommand{\suchthat}{\mid}
\newcommand{\transpose}[1]{{#1}^\mathrm{t}}
\newcommand{\abs}[1]{|#1|}
\newcommand{\DOT}[2]{\langle#1,\,#2\rangle}
\DeclareMathOperator{\diag}{diag}
\DeclareMathOperator{\tr}{tr}
\DeclareMathOperator*{\res}{res}
\DeclareMathOperator*{\qres}{qres}
\DeclareMathOperator{\sign}{sign}
\newcommand{\Div}{\mathrm{div}_{\Sone}}
\newcommand{\zbar}{\bar{z}}
\newcommand{\sym}{evaluation\xspace}
\newcommand{\DPW}{GWR\xspace}
\newtheorem{theorem}{Theorem}[section]
\newtheorem{lemma}[theorem]{Lemma}
\newtheorem{proposition}[theorem]{Proposition}
\newtheorem{conjecture}[theorem]{Conjecture}
\theoremstyle{definition}
\newtheorem{remark}[theorem]{Remark}
\newtheorem{definition}[theorem]{Definition}
\newtheorem{example}[theorem]{Example}
\numberwithin{equation}{section}
\DeclareMathOperator{\dress}{\#}
\normalfont\fontsize{12}{17}\sffamily\bfseries}
\normalfont\fontsize{10}{14}\sffamily\bfseries}
\normalfont\fontsize{10}{14}\sffamily}
\title{Minimal $n$-Noids in hyperbolic and anti-de~Sitter 3-space}
\author{Alexander I. Bobenko}
\address{Institut f\"ur Mathematik, TU Berlin,
Str. des 17. Juni 136, 10623 Berlin, Germany}
\email{bobenko@math.tu-berlin.de}
\author{Sebastian Heller}
\address{Fachbereich Mathematik,
Universit\"at Hamburg, 20146 Hamburg, Germany} 
\email{seb.heller@gmail.com}
\author{Nick Schmitt}
\address{Institut f\"ur Mathematik, TU Berlin,
Str. des 17. Juni 136, 10623 Berlin, Germany}
\email{schmitt@math.tu-berlin.de}
\date{\today}
\begin{document}

\begin{abstract}
We construct minimal surfaces in hyperbolic  and
anti-de~Sitter 3-space with the topology of
a $n$-punctured sphere by loop group factorization methods. The end behavior of the surfaces is based on the asymptotics  of Delaunay-type surfaces, i.e., rotational symmetric minimal cylinders. The minimal surfaces in $\spaceH$ extend to Willmore surfaces in the conformal 3-sphere $S^3=\spaceH\cup\Stwo\cup\spaceH$.
\end{abstract}

\maketitle

\section*{Introduction}
\label{sec:intro}

\begin{figure}[b]
  \centering
  \includegraphics[width=0.475\textwidth]{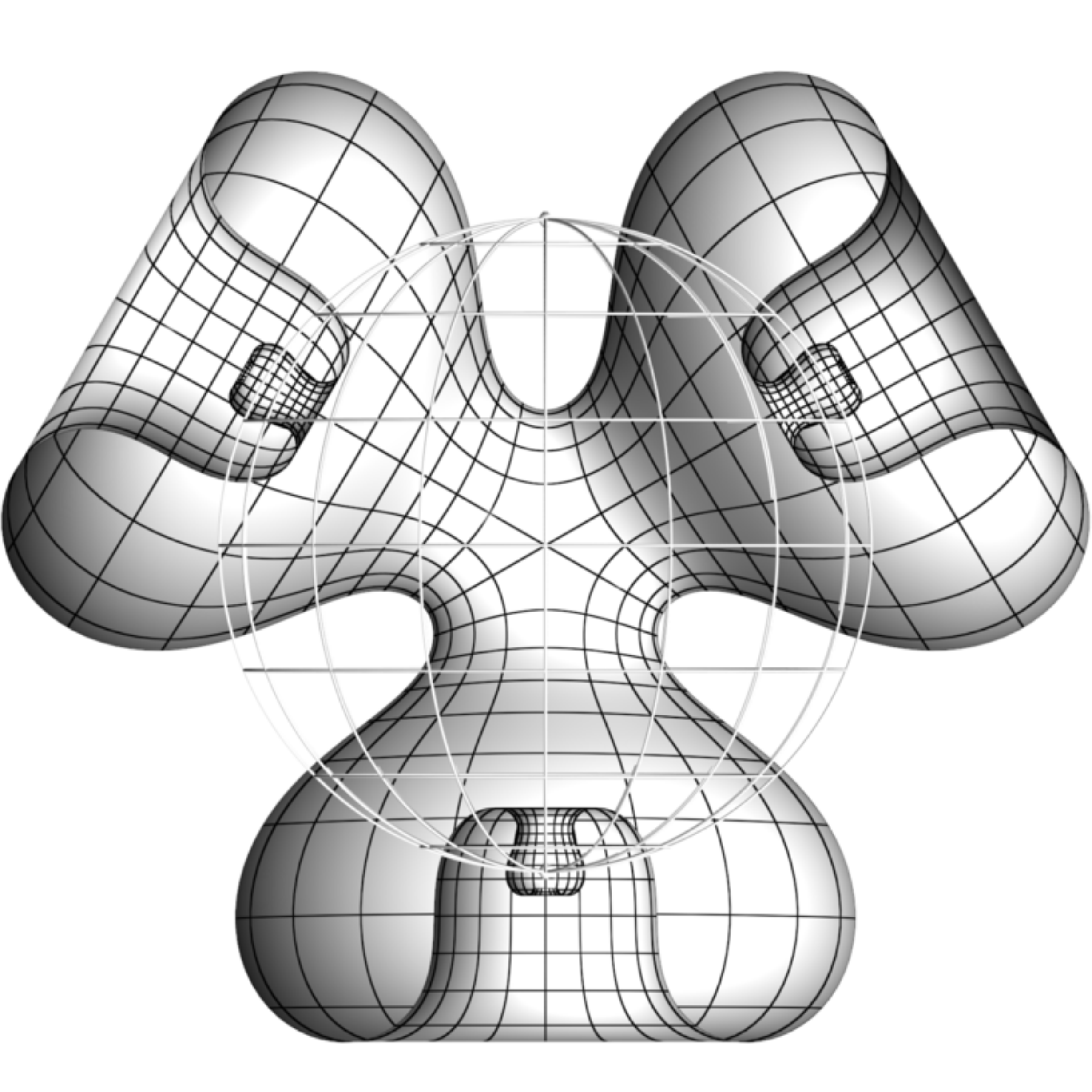}
  \includegraphics[width=0.475\textwidth]{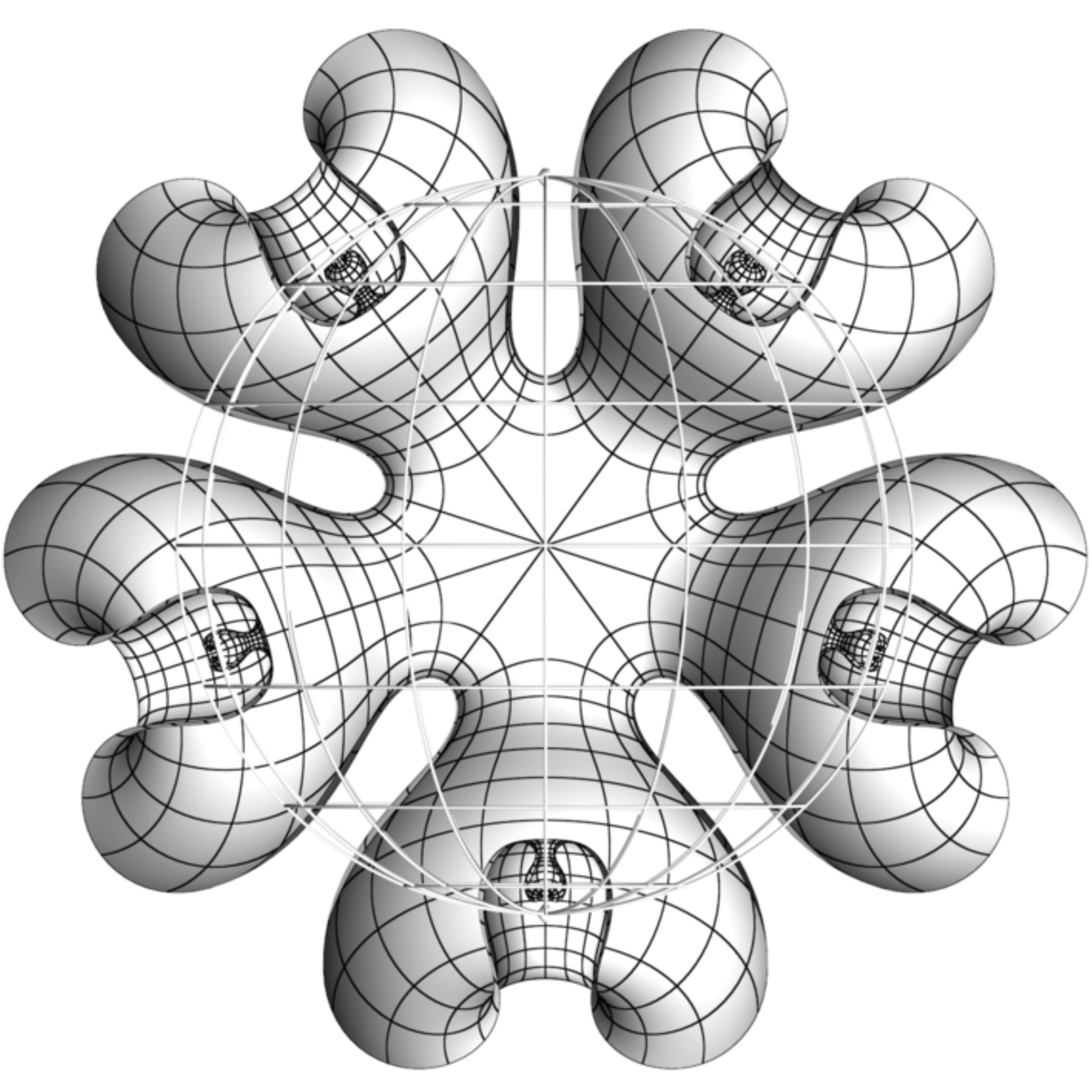}
  \caption{\small
    Cutaway views of equilateral minimal noids in $\spaceH\cup\Stwo\cup\spaceH$
    with end counts and cyclic symmetry orders $3$ and $5$.
    In all figures, the surface is stereographically projected to $\bbR^3$,
    the wireframe designates the ideal boundary,
    and the lines on the surface are curvature lines.}
  \label{fig:noid-h3}
\end{figure}

The AdS/CFT correspondence predicts that the physics of the
gravitational theory of anti-de~Sitter (AdS) spacetime is
equivalent to the physics of conformal field theory (CFT) on the
boundary of that spacetime~\cite{Maldacena_1998}. A particularly
important instance is the computation of the Wilson loop expectation
value, which by work of Maldacena is given by the (regularized) area
of a spacelike minimal surface in AdS spacetime with the loop as
boundary~\cite{Drukker_Gross_Ooguri_1999,Alday_Maldacena_2007,Alday_Maldacena_2009,Rey_Yee_2001}.
The aim of this paper is to provide new examples of minimal surfaces
in anti-de~Sitter spaces of non-trivial topological type and with
several boundary components.

It is well known that the equations for minimal surfaces in
anti-de~Sitter space are related to Hitchin self-duality
equations~\cite{Hitchin_1987,Alday_Maldacena_2009}.  In particular,
minimal surfaces in $\spaceADS$ are given by solutions corresponding
to points in the Hitchin component for rank 2, and minimal surfaces in
a totally geodesic $\spaceH\subset \text{AdS}_4$ are given by rank 2
solutions of the self-duality equations with nilpotent Higgs field.
As those, they fall into the class of integrable PDEs, and there are
powerful tools for computing large classes of
examples~\cite{Sakai_Satoh_2010,Ogata_2017,Dorfmeister_Inoguchi_Kobayashi_2014}. On
the other hand, it is hard to construct surfaces with non-trivial
finitely generated topology which are complete, i.e., the surface can
be continued to the boundary at infinity of the anti-de~Sitter space,
and the intersection is a finite number of topological circles. It is
worth remarking that surfaces given by global solutions of the
self-duality equations are not of that much interest in the AdS/CFT
correspondence as the extrinsic monodromy is always non-trivial and
the {\em intersection} with the boundary at infinity gets very
complicated.  By finite gap integration, cylindrical solutions have
been constructed; see for
example~\cite{Bakas_Pastras_2016}. In~\cite{Fonda_Giomi_Salvio_Tonni_2015}
a detailed numerical study for surfaces bounded by a finite number of
special curves (including circles, (super)ellipses and boundaries of
spherocylinders) is carried out, and the holographic entanglement
entropy and the holographic mutual information for those entangling
curves have been numerically computed.  

For the construction of meaningful
examples via loop group factorization methods, two main problems need
to be solved: The first is the proof of existence of potentials
depending on a loop parameter on a surface with non-trivial monodromy
which satisfy a certain reality condition (see remark~\ref{rem:closing}).
This reality condition is given explicitly only by
solving ODEs along non-trivial curves. We solve this problem for a
large class of examples. The second problem is concerned with the
construction of the minimal surfaces from potentials satisfying the
reality conditions: this problem is based on the fact that the
generalized Iwasawa factorization is not global, i.e., there exit
loops which do not admit an Iwasawa factorization
(remark~\ref{rem:iwasawa}).
It is hard to determine the curves on the surface
along which the factorization breaks down, and to characterize the
behavior of the minimal surface along those curves in general. On the
other hand, under a mild assumption on the degeneracy of the Iwasawa
decomposition, the minimal surface intersects the boundary at infinity
transversally; see for example~\cite[section 5]{Heller_Heller_2018}
for technical details and figures~\ref{fig:delaunay-h3} and
\ref{fig:noid-h3} for visualizations.  It is this failure of the
global Iwasawa decomposition which allows us to produce minimal
surfaces in anti-de~Sitter spaces with non-trivial (finitely generated)
topology and predicted topological intersections with the boundary at
infinity, at least numerically. We plan to investigate the remaining
theoretical questions concerning the intersections at infinity in
forthcoming work.

The structure of the paper is as follows: In section~\ref{sec:CMCDPW}
we briefly describe a unified loop group approach for constant mean
curvature surfaces in the symmetric spaces $\spaceS$, $\spaceADS$,
$\spaceH\cup\spaceH$ and $\spaceDS$, including the case of minimal
surfaces. In section~\ref{sec:H3} we first recall the conformal
surface geometry in the lightcone model, with special emphasis on
minimal surfaces in hyperbolic space.  We discuss some simple examples
of minimal surfaces in $\spaceH$: the hyperbolic disk as the
counterpart of the round sphere, and minimal Delaunay cylinders. We
then define $n$-noids and open $n$-noids, motivated by the behavior of
Delaunay cylinders at their ends. We prove the existence of open
$n$-noids, and conjecture that those surfaces actually give rise to
$n$-noids in the strict sense. In section~\ref{sec:ads3}, we explain
how to modify the techniques of section~\ref{sec:H3} in order to
obtain minimal surfaces in $\spaceADS$.  In section~\ref{sec:g3noid}
we study surfaces with three ends more explicitly
via the generalized Weierstrass representation (\DPW); these
investigations have enabled us to perform computer experiments.
Among others we construct a family of equilateral trinoids in $\spaceH$
with mean curvature $\abs{H}< 1$.
The paper is supplemented by figures visualizing global properties of
minimal surfaces in $\spaceH$, $\spaceDS$ and $\spaceADS$.
\section*{Acknowledgements} The first author is partially supported by the DFG Collaborative Research Center TRR 109 {\em Discretization in Geometry and Dynamics}.
The second author is supported by RTG 1670 {\em Mathematics inspired by string theory and quantum field theory}
 funded by the DFG. The third author is supported by the DFG Collaborative Research Center TRR 109 
{\em Discretization in Geometry and Dynamics}.
\section{The loop group method for CMC surfaces in symmetric spaces}
\label{sec:CMCDPW}

\subsection{Unitary frames}
\label{sec:unitary-frame}

  To construct constant mean curvature (CMC) surfaces in the symmetric spaces
  $\spaceS$, $\spaceADS$, $\spaceH\cup\spaceH$ and $\spaceDS$
  we use the following matrix models
  in $\matSL_2(\bbC)$:
\begin{equation}
  \label{eq:spaceform}
  \begin{array}{l|l|l}
    \text{space} & \text{matrix model} & \hphantom{-}\DOT{x}{y}\\
    \hline
    \spaceS &
    \matSU_2 &
    \hphantom{-}\half\tr x\hat{y}
    \\
    \spaceADS &
    \matSU_{11} &
    -\half \tr x\hat{y}
    \\
    \spaceH\cup\spaceH &
    \{X \in \matSL_2(\bbC) \suchthat {\transpose{\ol{X}}} = X\} &
    -\half\tr x\hat{y}
    \\
    \spaceDS &
    \{X \in \matSL_2(\bbC) \suchthat
        {\transpose{\ol{X}}} = e_0 X e_0 ^{-1}\} &
        \hphantom{-}\half\tr x\hat{y}
  \end{array}
\end{equation}
where
$e_0 = \diag(\imi,\,-\imi)$.
The third column of the table specifies
the inner product on $\mathrm{M}_{2\times 2}(\bbC)$
extending the metric on the symmetric space,
with sign chosen so that the signature of the
tangent space is $(\pm,\,+\,+)$.
Here $\hat y=\bigl[\begin{smallmatrix}d & -b\\-c & a\end{smallmatrix}\bigr]$
for $y=\bigl[\begin{smallmatrix}a & b\\c & d\end{smallmatrix}\bigr]$.


To use integrable systems methods we introduce the loop group
$\Lambda\matSL_2(\bbC)$ of real analytic maps $\Sone\to\matSL_2(\bbC)$,
and the subgroup $\Lambda_+\matSL_2(\bbC)$ of
loops which extend holomorphically to the interior of the unit disk.
The four involutions of $\Lambda\matSL_2(\bbC)$
\begin{equation}
  \label{eq:real-form}
  \begin{array}{l|l}
    \spaceS & X^\ast(\lambda) = {\transpose{\ol{ X(1/\ol{\lambda}) }}}^{-1}\\
    \spaceADS & X^\ast(\lambda) = e_0 {\transpose{\ol{ X(1/\ol{\lambda}) }}}^{-1} e_0^{-1}\\
    \spaceH & X^\ast(\lambda) = {\transpose{\ol{ X(-1/\ol{\lambda}) }}}^{-1}\\
    \spaceDS  & X^\ast(\lambda) = e_0 {\transpose{\ol{ X(-1/\ol{\lambda}) }}}^{-1}e_0 ^{-1}
  \end{array}
\end{equation}
determine four real forms $\{X\in\Lambda\matSL_2(\bbC) \suchthat
X^\ast = X\}\subset\Lambda\matSL_2(\bbC)$.  By an abuse of
terminology, we call elements of such a subgroup \emph{unitary}, or
emphasizing the involution e.g. $\spaceH$-unitary.  We will also
denote by $^\ast$ the corresponding involutions of the Lie algebra
$\Lambda\matsl_2(\bbC)$.

Given a choice of one of the real forms induced by~\eqref{eq:real-form}, a
\emph{unitary connection} is a $\Lambda\matsl_2(\bbC)$-valued $1$-form
$\eta$ on a Riemann surface $\Sigma$ with the following properties:
\begin{itemize}
\item
  $\eta$ is flat for all $\lambda$.
\item
  $\eta = \eta^\ast$.
\item
  $\eta$ has a simple pole at $\lambda=0$,
  $\eta_{-1} :=\res_{\lambda=0}\eta$
  has no $(0,\,1)$ part, $\det\eta_{-1} = 0$, and
  $\DOT{\eta_{-1}}{\eta_{-1}^\ast} \ne 0$.
\end{itemize}
A \emph{unitary frame} $F$ is a unitary solution to the ODE $\deriv F = F\eta$.

The \emph{\sym formula}  maps a unitary frame to the symmetric space:
\begin{equation}
  \label{eq:sym}
  f = \left.F\right|_{\lambda_0}\left.F^{-1}\right|_{\lambda_1}
\end{equation}
where $\lambda_0,\,\lambda_1\in\bbC^\ast$
are the \emph{\sym points} as follows:
\begin{equation}
  \label{eq:sym-point}
  \begin{array}{l|l|l}
    \text{space} & \text{\sym points} & \text{mean curvature $H$}\\
    \hline
    \spaceS\text{ and } \spaceADS& \lambda_0,\,\lambda_1 \in\Sone \text{ distinct}
    & \frac{\imi(\lambda_1+\lambda_0)}{\lambda_1 - \lambda_0}\\
    \spaceH \text{ and }\spaceDS
    & \lambda_0,\,\lambda_1\in\bbC^\ast \text{ with }\lambda_0\ol{\lambda_1} = -1
    & \frac{\lambda_1+\lambda_0}{\lambda_1 - \lambda_0}
  \end{array}
\end{equation}
The third column of the table lists
the mean curvature $H$
of the induced immersion $f$ up to sign,
derived in theorem~\ref{thm:unitary-frame}.
Note that $H$ satisfies
$\abs{H}<1$ for the symmetric spaces $\spaceH$ and $\spaceDS$. Formula \eqref{eq:sym} was first derived in \cite{Bobenko_1991} for CMC surfaces in $\spaceH$ and $\spaceS.$

\begin{theorem}
  \label{thm:unitary-frame}
  Let $\eta$ be a unitary connection on a Riemann surface $\Sigma$ with
  respect to one of the four real forms~\eqref{eq:real-form}.
  Let $F$ be a unitary frame satisfying $\deriv F = F \eta$.
  Then the \sym formula~\eqref{eq:sym} evaluated
  at \sym points~\eqref{eq:sym-point}
  yields a spacelike conformal CMC immersion $f$ into the
  corresponding symmetric space
  with metric
  \begin{equation}
    \label{eq:metric}
    v^2\deriv z\otimes\deriv\zbar,\quad
    v^2 = 2(\lambda_0^{-1}-\lambda_1^{-1})(\lambda_0-\lambda_1)
    \DOT{\alpha}{\alpha^\ast}
  \end{equation}
  and constant mean curvature as in~\eqref{eq:sym-point}.
\end{theorem}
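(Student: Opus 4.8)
The plan is to carry out the standard loop-group/DPW-style computation showing that the Sym formula produces a conformal CMC immersion, adapted simultaneously to all four real forms. I would begin by writing $\eta = \sum_{k\ge -1}\eta_k\lambda^k$ with $\eta_{-1} = \alpha\,\deriv z$ a nilpotent $(1,0)$-form (using $\det\eta_{-1}=0$ and that $\eta_{-1}$ has no $(0,1)$ part), and decompose the frame's logarithmic derivative as $F^{-1}\deriv F = \eta$. The flatness condition for all $\lambda$, together with the pole structure, forces the familiar normalization: after a $\lambda$-independent gauge one can assume $\eta = \lambda^{-1}\alpha\,\deriv z + (\text{zeroth order term}) + \lambda(\text{conjugate term})$, where the reality condition $\eta = \eta^\ast$ ties the coefficient of $\lambda$ to that of $\lambda^{-1}$ via the relevant involution in~\eqref{eq:real-form}. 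This is where the case distinction between the involutions $\lambda\mapsto 1/\ol\lambda$ (for $\spaceS$, $\spaceADS$) and $\lambda\mapsto -1/\ol\lambda$ (for $\spaceH$, $\spaceDS$) enters, and it is precisely matched by the two choices of Sym points in~\eqref{eq:sym-point}: $\lambda_0,\lambda_1\in\Sone$ versus $\lambda_0\ol{\lambda_1}=-1$.

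Next I would show $f = F|_{\lambda_0}F^{-1}|_{\lambda_1}$ lands in the correct matrix model. For $\spaceS$ and $\spaceADS$ this uses that $F|_{\lambda_i}$ is unitary (resp. $\matSU_{11}$) when $\lambda_i\in\Sone$, so the product of a unitary matrix with the inverse of another is again in the group, after checking the $e_0$-conjugation compatibility in the AdS case. For $\spaceH$ and $\spaceDS$ one instead uses the relation $F^\ast(\lambda) = F(-1/\ol\lambda)^{-t}$ (bar) together with $\lambda_0\ol{\lambda_1}=-1$ to verify ${\transpose{\ol f}} = f$ (resp. the $e_0$-twisted version), which is exactly the defining condition of the model in~\eqref{eq:spaceform}; this is the point where the Bobenko-type formula~\cite{Bobenko_1991} gets its mileage. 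Then I would compute $\deriv f = F|_{\lambda_0}(\eta|_{\lambda_0} - \mathrm{Ad}(F^{-1}|_{\lambda_0}F|_{\lambda_1})\eta|_{\lambda_1})F^{-1}|_{\lambda_1}$ and read off that, because $\eta$ has only the three Fourier modes $-1,0,1$, the difference $\eta|_{\lambda_0}-\eta|_{\lambda_1}$ involves only $\alpha$ and $\alpha^\ast$; conformality and the metric formula~\eqref{eq:metric} then follow from $\DOT{\deriv f}{\deriv f}$ using $\det\alpha = 0$, $\DOT{\alpha}{\alpha^\ast}\ne 0$, and the explicit inner products in the third column of~\eqref{eq:spaceform}. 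The prefactor $2(\lambda_0^{-1}-\lambda_1^{-1})(\lambda_0-\lambda_1)$ is exactly what comes out of expanding $\lambda_0^{-1}-\lambda_1^{-1}$ against $\lambda_0-\lambda_1$ in the two diagonal terms of $\DOT{\deriv f}{\deriv f}$.

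Finally, the mean curvature: differentiating $f$ a second time and projecting onto the normal direction, I would express the second fundamental form in terms of the Hopf differential $\DOT{\alpha}{\cdot}$ and the $\lambda$-dependence of the Sym points, obtaining that $H$ is the constant given in~\eqref{eq:sym-point} — $\imi(\lambda_1+\lambda_0)/(\lambda_1-\lambda_0)$ on $\Sone$, and $(\lambda_1+\lambda_0)/(\lambda_1-\lambda_0)$ under $\lambda_0\ol{\lambda_1}=-1$ — and in the latter case noting $\abs H<1$ automatically. Spacelikeness follows because the prefactor in~\eqref{eq:metric} has the sign making the induced metric positive definite on the $(0,+,+)$- or $(0,+,+)$-signature model, consistent with the sign convention fixed in the third column of~\eqref{eq:spaceform}. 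The main obstacle I anticipate is bookkeeping: keeping the four involutions, the two families of Sym points, and the sign conventions on $\DOT{\cdot}{\cdot}$ straight simultaneously, so that each reality condition $\eta=\eta^\ast$ pairs correctly with its Sym-point constraint and yields a \emph{real} (as opposed to complex) immersion; the actual differential-geometric content is the routine Sym-Bobenko computation once that dictionary is set up.
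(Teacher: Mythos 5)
Your proposal is correct and takes essentially the same route as the paper's proof: expand $\eta$ in the modes $\lambda^{-1},\lambda^{0},\lambda$, compute the first derivatives of $f=F|_{\lambda_0}F^{-1}|_{\lambda_1}$ to obtain conformality and the metric~\eqref{eq:metric} from $\DOT{\alpha}{\alpha^\ast}$, then get $H$ by pairing the second derivative with the normal, with the involution/Sym-point dictionary covering the four cases (the paper writes out only $\spaceADS$ and notes the others differ by signs). One small slip: $\deriv f$ carries no adjoint term, since $\deriv f = F|_{\lambda_0}\bigl(\eta|_{\lambda_0}-\eta|_{\lambda_1}\bigr)F^{-1}|_{\lambda_1}$, and it is exactly this that makes the zeroth mode cancel and yields the paper's expressions for $f_z$ and $f_{\zbar}$.
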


\begin{remark}\label{rem:assofami}
The unitary connection $\eta$ is usually referred to as the associated
family of flat connections of the surface $f$.
\end{remark}

\begin{proof}
  We prove the theorem for $\spaceADS$;
  the proof for the other symmetric spaces is similar
  with sign changes.
  The unitary connection decomposes as
  \begin{equation}
    \label{eq:unitary-connection}
    \eta = (\alpha\lambda^{-1} + \beta)\deriv z +
    (\beta^\ast + \alpha^\ast\lambda)\deriv\zbar.
  \end{equation}
  The flatness of $\eta$ is equivalent to
  \begin{equation}
    [\alpha,\,\beta^\ast] = \alpha_{\zbar}
    ,\quad
    [\alpha,\,\alpha^\ast] + [\beta,\,\beta^\ast] = \beta_{\zbar} - {(\beta^\ast)}_z
    ,\quad
    [\beta,\,\alpha^\ast] = {(\alpha^\ast)}_z.
  \end{equation}

  Let $F_0 = \left.F\right|_{\lambda_0}$
  and $F_1 = \left.F\right|_{\lambda_1}$.
  To compute the metric
  \begin{equation}
  f_z = (\lambda_0^{-1}-\lambda_1^{-1})F_0 \alpha F_1^{-1},\quad
  f_{\zbar} = (\lambda_0-\lambda_1)F_0 \alpha^\ast F_1^{-1}.
  \end{equation}
  Since $\DOT{f_z}{f_z} = \DOT{f_{\zbar}}{f_{\zbar}} = 0$ the metric is
  $v^2 \deriv z\otimes \deriv\zbar$ with
  \begin{equation}
    v^2 = 2\DOT{f_z}{f_{\zbar}} = 2(\lambda_0^{-1} - \lambda_1^{-1})
    (\lambda_0 - \lambda_1)\DOT{\alpha}{\alpha^\ast}.
  \end{equation}
  Since $\lambda_0\ne\lambda_1$ and $\DOT{\alpha}{\alpha^\ast}\ne 0$,
  the metric is nonzero, so the \sym formula induces
  a conformal immersion.

  The normal is $N = F_0\gamma F_1^{-1}$ where
  \begin{equation}
    \gamma = -\frac{\imi}{2}\frac{[\alpha,\,\alpha^\ast]}{\DOT{\alpha}{\alpha^\ast}}.
  \end{equation}
  Using flatness,
  \begin{equation}
    f_{z\zbar} = (\lambda_0^{-1}-\lambda_1^{-1})
    F_0 (\lambda_0 \alpha^\ast\alpha -\lambda_1\alpha\alpha^\ast) F_1^{-1}.
  \end{equation}
  Using that $\DOT{\alpha\alpha^\ast + \alpha^\ast\alpha}{\gamma} = 0$,
  the mean curvature of $f$ is
  \begin{equation}
    H = 2 v^{-2}\DOT{f_{z\zbar}}{N} =
    -\half v^2(\lambda_0^{-1}-\lambda_1^{-1})(\lambda_0 + \lambda_1)
    \DOT{[\alpha,\,\alpha^\ast]}{\gamma} =
    \imi \frac{\lambda_1 + \lambda_0}{\lambda_1 - \lambda_0}.
    \qedhere
  \end{equation}
\end{proof}

\begin{remark}
  With other choices of \sym formulas and \sym points,
  CMC surfaces can also be constructed from unitary connections
  in the symmetric spaces related by the Lawson correspondence:
  \begin{align}
    \text{corresponding to $\spaceS$}&:\
    \quad\text{$\spaceH$ ($\abs{H}>1$) and $\bbR^3$}\\
    \text{corresponding to $\spaceADS$}&:
    \quad\text{$\spaceDS$ ($\abs{H}>1$) and $\bbR^{2,1}$}.
  \end{align}
\end{remark}

\begin{remark}\label{rem:sine}
  In the case the Hopf differential is $\deriv z^2$,
  after a coordinate change the flatness of the unitary connection
  is Gauss equation on the metric $v^2 = e^{2u}$:
  \begin{align}
    \label{eq:gauss-equation}
    \spaceS &:\ \Delta u + 2\sinh 2u = 0
    &
    \spaceH\ (\abs{H}<1)&:\ \Delta u - 2\cosh 2u = 0\\
    \spaceADS &:\ \Delta u - 2\sinh 2u = 0
    &
    \spaceDS\ (\abs{H}<1)&:\ \Delta u + 2\cosh 2u = 0.
  \end{align}
\end{remark}

\subsection{Holomorphic frames}
\label{sec:gwr}
To construct CMC immersions by theorem~\ref{thm:unitary-frame}, one is
required to produce a flat unitary connection.  The flatness is the
Gauss equation, a partial differential equation on the metric.  This
section introduces the generalized Weierstrass representation
(\DPW)~\cite{Dorfmeister_Pedit_Wu_1998}. In this construction, the PDE
is replaced by an ordinary differential equation together with a
Iwasawa loop group factorization.  For a more comprehensive treatment
of real forms of loop groups see~\cite{Kobayashi_2011}.  The case
$\bbR^{2,1}$ was considered in~\cite{Brander_Rossman_Schmitt_2010}.

A \emph{\DPW potential} $\xi$ is a $\Lambda\matsl_2(\bbC)$-valued
$(1,\,0)$-form on a Riemann surface $\Sigma$ satisfying the following
condition: $\xi$ has a simple pole at $\lambda=0$, and
$\xi_{-1}:=\res_{\lambda=0}\xi$ satisfies $\det\xi_{-1} = 0$ and is
nowhere zero.  A \emph{\DPW frame} $\Phi$ for $\xi$ is a holomorphic
map from the domain to $\Lambda\matsl_2(\bbC)$ satisfying
$\deriv\Phi = \Phi\xi$.  Choosing a real form, an
\emph{Iwasawa factorization} of $\Phi$ is
\begin{equation}
  \label{eq:iwasawa}
  \Phi = F B,\quad
  F^\ast = F\text{ and }B\in\Lambda_+\matSL_2(\bbC).
\end{equation}

The Iwasawa factorization can be computed via the Birkhoff
factorization as follows.  When in the big cell,
${\Phi^\ast}^{-1}\Phi$ has a Birkhoff factorization
\begin{equation}
  \label{eq:birkhoff}
        {\Phi^\ast}^{-1}\Phi = {X_+^\ast}^{-1} X_+,\quad
        X_+\in\Lambda_+\matSL_2(\bbC).
\end{equation}
Then the desired Iwasawa factorization of $\Phi$ is
\begin{equation}
  \Phi = F B,\quad
  F:=\Phi X_+^{-1},\quad
  B:= X_+
\end{equation}
because $F^\ast = F$.

\begin{theorem}
  \label{thm:gwr}
  Let $\xi$ be a \DPW potential,
  and $\Phi$ a corresponding \DPW frame.
  If $\Phi$ has an Iwasawa factorization,
  then $F$ is a unitary frame.
  Hence by theorem~\ref{thm:unitary-frame} $F$
  induces a conformal CMC immersion.
\end{theorem}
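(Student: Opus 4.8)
The plan is to verify that $\eta:=F^{-1}\deriv F$ is a unitary connection in the sense defined above; then $F$ is by definition a unitary frame, and theorem~\ref{thm:unitary-frame} produces the asserted conformal CMC immersion. The identity $\deriv F=F\eta$ is tautological and $F^\ast=F$ holds by the construction~\eqref{eq:iwasawa}, so the content lies entirely in checking the three bullet points on $\eta$. Writing $F=\Phi B^{-1}$ and using $\deriv\Phi=\Phi\xi$ gives the formula I would work from,
\[
  \eta \;=\; F^{-1}\deriv F \;=\; B\,\xi\,B^{-1}\,-\,(\deriv B)\,B^{-1},
\]
which exhibits the $\lambda$-behaviour of $\eta$ through the loop $B\in\Lambda_+\matSL_2(\bbC)$, even though $F$ itself need not extend across $\lambda=0$. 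I would also record at the outset that $\Phi=FB$ with $F^\ast=F$ forces ${\Phi^\ast}^{-1}\Phi={B^\ast}^{-1}B$, so the hypothesis ``$\Phi$ admits an Iwasawa factorization'' means precisely that ${\Phi^\ast}^{-1}\Phi$ lies in the big cell, with $X_+=B$ in~\eqref{eq:birkhoff}; this is what is used in the delicate step.

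Flatness of $\eta$ for all $\lambda$ is automatic, $\eta$ being a Maurer--Cartan form: $\deriv\eta+\eta\wedge\eta=0$. For the reality $\eta=\eta^\ast$ I would differentiate $F^\ast=F$: each involution in~\eqref{eq:real-form} is a group automorphism of $\Lambda\matSL_2(\bbC)$ commuting with $\deriv$, so from $F^\ast=F$ its logarithmic derivative $\eta=F^{-1}\deriv F$ is pointwise fixed by the induced involution on $\Lambda\matsl_2(\bbC)$-valued $1$-forms, and the interchange of $(1,\,0)$- and $(0,\,1)$-parts already visible in~\eqref{eq:unitary-connection} is exactly the effect of the entrywise complex conjugation in~\eqref{eq:real-form} on the $\deriv z,\,\deriv\zbar$ components. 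For the pole condition, $B$ is holomorphic and invertible at $\lambda=0$, so $(\deriv B)B^{-1}$ is regular there and $\eta$ inherits from $\xi$ at most a simple pole at $\lambda=0$ with residue
\[
  \eta_{-1}\;=\;\res_{\lambda=0}\eta\;=\;B(0)\,\xi_{-1}\,B(0)^{-1}.
\]
Since conjugation preserves the determinant, $\det\eta_{-1}=\det\xi_{-1}=0$; since $B(0)$ is invertible and $\xi_{-1}$ is nowhere zero, $\eta_{-1}$ is nowhere zero; and since $\xi$ is of type $(1,\,0)$ while $B(0)$ carries no form part, $\eta_{-1}$ has no $(0,\,1)$-part.

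The one clause that requires genuine thought is $\DOT{\eta_{-1}}{\eta_{-1}^\ast}\neq0$. For the $\spaceS$- and $\spaceH$-forms it is free: using $\hat x=-x$ for traceless $x$ together with the formula for the leading coefficient one obtains $\DOT{\eta_{-1}}{\eta_{-1}^\ast}=\half\tr\bigl(\eta_{-1}\transpose{\ol{\eta_{-1}}}\bigr)>0$ whenever $\eta_{-1}\neq0$ --- a definite quantity, whose sign reappears in the metric~\eqref{eq:metric}. For the indefinite real forms $\matSU_{11}$ and the $\spaceDS$-form, however, one computes $\DOT{x}{x^\ast}=\half\bigl(|b|-|c|\bigr)^2$ on a nilpotent $x=\bigl[\begin{smallmatrix}a&b\\c&-a\end{smallmatrix}\bigr]$, which does vanish for suitable $x$, so here the hypothesis on $\Phi$ must genuinely enter: I expect that when ${\Phi^\ast}^{-1}\Phi$ lies in the big cell the residue $B(0)\xi_{-1}B(0)^{-1}$ can never be $^\ast$-isotropic, i.e.\ that the vanishing locus of $\DOT{\eta_{-1}}{\eta_{-1}^\ast}$ is contained in the locus where the Iwasawa factorization breaks down. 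This is the step I anticipate requiring real work; it is the leading-coefficient shadow of the non-globality of the generalized Iwasawa factorization emphasized in the introduction. Granting it, $\eta$ is a unitary connection, hence $F$ is a unitary frame and theorem~\ref{thm:unitary-frame} yields the conclusion.
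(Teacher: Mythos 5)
Your route is the paper's own: the printed proof computes $\eta=\xi.(B^{-1})$ --- the same formula $B\xi B^{-1}-(\deriv B)B^{-1}$ you start from --- and then reads off the simple pole in $\lambda$ from $B\in\Lambda_+\matSL_2(\bbC)$, the absence of a $(0,1)$ part of the residue from $\xi$, and $\eta^\ast=\eta$ from $F^\ast=F$, with flatness taken as automatic. On the residue you are in fact more careful than the paper, which never mentions $\det\eta_{-1}=0$ or the nonvanishing of $\eta_{-1}=B(0)\,\xi_{-1}\,B(0)^{-1}$; your conjugation-by-$B(0)$ argument supplies both.

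The one caveat concerns the clause you flag, $\DOT{\eta_{-1}}{\eta_{-1}^\ast}\neq 0$. The paper's proof is silent on it as well, so you have not fallen short of the printed argument; but the resolution you anticipate --- that lying in the big cell should force the residue not to be $^\ast$-isotropic --- cannot work for the indefinite real forms. The paper's own minimal Delaunay cylinders in $\spaceADS$ have conformal factor $v^2$ vanishing to second order once per period at points where the surface does not meet the ideal boundary, i.e.\ at points where the Iwasawa factorization does exist, and this produces the cone points and swallowtail singularities of figures~\ref{fig:delaunay-ads3} and~\ref{fig:swallowtail-ads3}; by \eqref{eq:metric} that vanishing is exactly the vanishing of $\DOT{\alpha}{\alpha^\ast}$, i.e.\ of the quantity $\DOT{\eta_{-1}}{\eta_{-1}^\ast}$ you are trying to control. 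So for $\spaceADS$ and $\spaceDS$ the statement should be read as producing a spacelike conformal CMC immersion away from the real-analytic locus where $\DOT{\eta_{-1}}{\eta_{-1}^\ast}$ vanishes (or under the additional hypothesis that it does not vanish); it is not a consequence of the existence of the Iwasawa factorization, and no amount of ``real work'' along the line you sketch will make it one. For $\spaceS$ and $\spaceH$ your positive-definiteness observation does settle the clause, and there the proposal is complete.
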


\begin{proof}
  Since $\deriv \Phi = \Phi\xi$, $\deriv F = F\eta$, and $\Phi = F B$, then
  \begin{equation}
    \eta = \xi . (B^{-1})
  \end{equation}
  where the dot denotes the gauge action
  $\xi.g := g^{-1}\xi g + g^{-1}\deriv g$.
  Since $B\in\Lambda\matSL_2(\bbC)$ and $\xi$ has a simple pole in $\lambda$,
  then $\eta$ has a simple pole in $\lambda$.
  Since $\xi$ has no $(0,\,1)$ part, then $\eta_{-1} := \res_{\lambda=0}\eta$
  has no $(0,\,1)$ part.
  Since $F^\ast = F$, then $\eta^\ast=\eta$, hence $\eta$ is a unitary connection.
\end{proof}

\begin{remark}
  \label{rem:hopf}
  The Hopf differential of CMC immersion induced by a \DPW potential $\xi$
  is of the form $c(\lambda)Q\deriv z^2$ where $c$ is $z$-independent and
  $Q$ is the leading term (coefficient of $\lambda^{-1})$ of $\det \xi$.
\end{remark}

\begin{remark}
  \label{rem:iwasawa}
  In the case of the spaceform $\spaceS$, the \DPW frame always has an
  Iwasawa factorization.  For the other three real forms, the \DPW
  frame may fail to have an Iwasawa factorization, generally on some
  real analytic subset of the domain.  On this set the surface is
  singular, in many cases going to the ideal boundary.
\end{remark}

If the domain is not simply connected, $\Phi$ has monodromy and the
induced CMC immersion on the universal cover does not generally
\emph{close}, that is, descend to an immersion of the domain.  A
sufficient condition for closing is the following:
\begin{remark}
  \label{rem:closing}
  The induced CMC immersion closes if the monodromy $M$ of $\Phi$ is
  unitary (intrinsic closing), and at the \sym points $M(\lambda_0) =
  M(\lambda_1) \in\{\pm 1\}$ (extrinsic closing).
\end{remark}

\section{Minimal $n$-noids in hyperbolic 3-space}
\label{sec:H3}

We study minimal surfaces in hyperbolic 3-space $\spaceH$ which
intersect the boundary at infinity perpendicularly. A convenient setup
from a geometric point of view is conformal surface geometry in
the lightcone model of the 3-sphere.

\subsection{The lightcone model for $\spaceH$}
The lightcone approach to conformal surface geometry is classical;
for details we refer to
\cite{Burstall_Pedit_Pinkall_2002,Quintino_2009} and the references
therein.  We consider Minkowski space $V=\bbR^{4,1}$ with its
standard inner product $(\cdot,\,\cdot)$ inducing the quadratic form
\begin{equation}
  q(x_0, \dots, x_4) = -x_0^2 + x_1^2 + \dots + x_4^2.
\end{equation}
The lightcone
\begin{equation}
  \mathcal L=\{\bbR x\in PV\mid x\neq0,\; q(x)=0\}
\end{equation}
is diffeomorphic to the 3-sphere $\Sthree$ via
\begin{equation}
  (x_1, \dots ,x_4)\in \spaceS\ \mapsto\ \bbR(1, x_1, \dots ,x_4)\in\mathcal L.
\end{equation}
Thus $\mathcal L$ inherits a conformal structure from the round metric
on $\spaceS$. It is well known that the group $\matSO(4,1)$ acts on
$\mathcal L$ by conformal transformations. In fact,
\begin{equation}
  \matSO_+(4,1)=\{g\in \matSO(4,1)\mid (g(e_0),e_0)>0\}
\end{equation}
is the group of (orientation preserving) conformal diffeomorphisms of
$\spaceS$ (equipped with the round conformal structure).

\subsubsection{Hyperbolic 3-space}
\label{hyp3space}
Taking the  spacelike vector $v_\infty=e_4$
we obtain two copies of hyperbolic 3-space
as
\begin{equation}
  \mathcal L\setminus (\mathcal L\cap P (e_4^\perp)) =
  \{(x_0,x_1,x_2,x_3,-1)\mid -x_0^2+x_1^2+x_2^2+x_3^2=-1\}.
\end{equation}
This space is naturally equipped with the metric of constant curvature $-1$.
The subgroup $\matSO_+(3,1)\subset \matSO_+(4,1)$ (defined by fixing the
vector $e_4$) realizes the isometry group of hyperbolic 3-space.

\subsubsection{Surfaces in the lightcone model}
We consider conformal immersions
\begin{equation}
  f\colon \Sigma\to \Sthree
\end{equation}
from a Riemann surfaces $\Sigma$ into the conformal 3-sphere. The map
$f$ is equivalent (in conformal geometry) to the line bundle of
light-like vectors
\begin{equation}
  \sigma^\ast\mathcal L\to\Sigma
\end{equation}
for $\sigma=(1,f)$. The fact that $f$ is an immersion means that for
any (local) lift $\tilde \sigma=g \sigma$ and (local) pointwise
independent vector fields $X,Y$ on $\Sigma$
\begin{equation}
  \text{span}(\tilde\sigma,\,X\cdot\tilde\sigma,\,Y\cdot\tilde\sigma)
\end{equation}
is a (real) 3-dimensional bundle (where $\cdot$ denotes
the derivative). Conformality of $f$ means that for a local holomorphic
chart $z$ on $\Sigma$ we have
\begin{equation}
  (\tilde\sigma_z,\tilde\sigma_z)=0=(\tilde\sigma_{\bar z},\tilde\sigma_{\bar z})
\end{equation}
where $g_z:=\frac{\partial}{\partial z}\cdot g$ and $g_{\bar
  z}:=\frac{\partial}{\partial \bar z }\cdot g$ for any
(vector-valued) function $g$.

A fundamental object in conformal surface theory is the mean curvature
sphere congruence. The mean curvature sphere is defined locally by
\begin{equation}
  \mathcal S=\text{span}
  (\tilde\sigma,\,\tilde\sigma_z,\,\sigma_{\bar z},\,\tilde\sigma_{z,\bar z}).
\end{equation}

\begin{proposition}
  The surface $f$ considered as a surface in hyperbolic 3-space
  defined by $v_\infty=e_4$ is of constant mean curvature $H$ if and
  only if
  \begin{equation}
    H=(e_4^\perp,\,e_4^\perp)
  \end{equation}
  is constant, where $(e_4^\perp)_p$ is the projection of $e_4$ to
  $\mathcal S_p$. Consequently, $f$ is minimal if and only if $e_4$ is
  contained (as a constant section) in $\mathcal S$.
\end{proposition}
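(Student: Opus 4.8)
The plan is to compute the mean curvature sphere congruence $\mathcal S$ explicitly in a lift of $f$ adapted to the chosen copy of $\spaceH$, and then to read off the formula for the mean curvature and the characterisation of minimality from the resulting structure equation. First I would fix the lift: let $\sigma$ be the unique lift of $f$ with $(\sigma,\,e_4)=-1$, so that $\sigma$ takes values in the hyperboloid model of $\spaceH$ displayed in Section~\ref{hyp3space} and $f$, regarded as a map into $\spaceH$, is this $\sigma$. Differentiating $(\sigma,\,e_4)=-1$ gives $(\sigma_z,\,e_4)=(\sigma_{\zbar},\,e_4)=(\sigma_{z\zbar},\,e_4)=0$, and since the span defining $\mathcal S$ is invariant under rescaling the lift I may use this $\sigma$ throughout. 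Writing $N$ for the unit normal of $f$ as a surface in $\spaceH$ (so $(N,\,N)=1$, with $N$ orthogonal to $\sigma$, $\sigma_z$, $\sigma_{\zbar}$ and $e_4$), conformality forces $\sigma_{z\zbar}$ to have no component tangent to $f$, and combining the Gauss equation of $f$ in $\spaceH$ with the (umbilic) second fundamental form of $\spaceH$ in $\bbR^{3,1}$ gives
\[
  \sigma_{z\zbar}=\tfrac{v^2}{2}\bigl(\sigma+e_4+HN\bigr),
\]
where $\sigma+e_4$ is the position vector of the hyperboloid (its normal in $\bbR^{3,1}$), $HN$ records the second fundamental form of $f$ in $\spaceH$, and $v^2$ is the conformal factor of the metric induced on $f$. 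This identity is the technical heart of the argument.

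From it one reads off $\mathcal S_p=\mathrm{span}\bigl(\sigma,\,\sigma_z,\,\sigma_{\zbar},\,e_4+HN\bigr)$, a four-dimensional subspace of signature $(3,1)$, and pairing $N+H\sigma$ against $\sigma$, $\sigma_z$, $\sigma_{\zbar}$ and $e_4+HN$ shows that its orthogonal complement is the spacelike line $\mathcal S_p^\perp=\bbR(N+H\sigma)$, with $(N+H\sigma,\,N+H\sigma)=1$. Writing $e_4=e_4^\perp+c\,(N+H\sigma)$ with $e_4^\perp\in\mathcal S_p$ the orthogonal projection, the coefficient is $c=(e_4,\,N+H\sigma)=-H$; hence how $e_4$ sits relative to $\mathcal S$ is governed by $H$ alone, which gives the stated formula for $H$ in terms of $(e_4^\perp,\,e_4^\perp)$ and — $\Sigma$ being connected and $H$ continuous — shows $(e_4^\perp,\,e_4^\perp)$ is constant exactly when $H$ is. For the last assertion, $f$ is minimal iff $H\equiv0$, which by $c=-H$ means $e_4\perp\mathcal S_p^\perp$ for every $p$, i.e.\ $e_4\in\mathcal S_p$ for every $p$ ($\mathcal S_p$ being nondegenerate), i.e.\ the constant section $e_4$ lies in $\mathcal S$.

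The step I expect to be the main obstacle is the structure equation above: one has to translate "mean curvature of $f$ in the $\spaceH$ cut out by $v_\infty=e_4$" carefully into the lightcone model, tracking the affine normalisation of the hyperboloid and the ambient curvature term of $\spaceH\subset\bbR^{3,1}$; once it is in hand, the rest is linear algebra in $\bbR^{4,1}$. A more conceptual alternative would be to invoke the characterising property of the mean curvature sphere — that $\mathcal S_p$ carries the same mean curvature vector as $f$ at $p$ — and then compute the (constant) mean curvature of an arbitrary round sphere in $\spaceH$ from its polar vector, running through the classification into totally geodesic planes, equidistant surfaces, horospheres and geodesic spheres; this trades the structure-equation computation for a one-parameter one, at the cost of presupposing that characterisation of $\mathcal S$.
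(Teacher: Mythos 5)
The paper does not actually prove this proposition -- it is quoted as a standard fact of conformal surface geometry in the lightcone model (cf.\ the references to Burstall--Pedit--Pinkall and Quintino at the start of the section) -- so there is no in-paper argument to compare with. Your direct computation is a correct and essentially complete proof of the substance of the statement. The normalised lift $(\sigma,\,e_4)=-1$ is the right one, the structure equation $\sigma_{z\bar z}=\tfrac{v^2}{2}(\sigma+e_4+HN)$ is correct (it is the Gauss formula for $f\subset\spaceH$ combined with the totally umbilic inclusion $\spaceH\subset\bbR^{3,1}$, whose normal is the position vector $\sigma+e_4$), and the linear algebra that follows checks out: $N+H\sigma$ is a unit spacelike vector spanning $\mathcal S_p^\perp$, $(e_4,\,N+H\sigma)=-H$, and from this both equivalences (CMC iff the projection has constant length, minimal iff $e_4\in\mathcal S_p$ for all $p$, using nondegeneracy of $\mathcal S_p$) follow as you say.

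One point you should not gloss over: your computation does \emph{not} give the displayed formula $H=(e_4^\perp,\,e_4^\perp)$ literally. What you prove is $e_4=e_4^\perp-H\,(N+H\sigma)$, hence $(e_4^\perp,\,e_4^\perp)=(e_4,\,e_4)-H^2=1-H^2$, equivalently $H=\pm(e_4,\,Z)$ for the unit polar vector $Z=N+H\sigma$ of the mean curvature sphere. Writing that this ``gives the stated formula'' is an overstatement; state the exact relation you derive and note that it still yields both assertions of the proposition: $1-H^2$ is constant iff $H$ is (connectedness plus continuity, as you argue), and $H\equiv 0$ iff $(e_4^\perp,\,e_4^\perp)=(e_4,\,e_4)$ iff $e_4\in\mathcal S_p$ everywhere. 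Indeed your computation shows the proposition's displayed equation can only be meant as a convention-dependent shorthand for this relation, so being precise here strengthens rather than weakens your write-up. Two cosmetic remarks: what you invoke is the Gauss \emph{formula} rather than the Gauss equation, and it is worth one line to note that $\sigma,\,\sigma_z,\,\sigma_{\bar z},\,e_4+HN$ are independent (pair against $\sigma$ and the tangent directions), so that $\mathcal S_p$ really is four-dimensional of signature $(3,1)$ and $\mathcal S_p^\perp$ is exactly the line you exhibit.
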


In the following, we are interested in conformally parametrized
surfaces $f\colon\Sigma\to\spaceS$ into the conformal 3-sphere such
that its intersection with
\begin{equation}
  \spaceH\cup\spaceH \ =
  \ \spaceS\setminus \mathrm{S}^2 \ =
  \ \mathcal L\setminus (\mathcal L\cap P (e_4^\perp))
\end{equation}
is a minimal surface. In order to apply the \DPW approach we need an
explicit isometry
\begin{equation}
  \Psi\colon \mathcal L\setminus (\mathcal L\cap P (e_4^\perp))\to
  \{X \in \matSL_2(\bbC) \suchthat {\transpose{\ol{X}}} = X\}.
\end{equation}
This is provided by
\begin{equation}
  \label{eq:lightconematrix}
  \Psi([x_0,x_1,x_2,x_3,x_4])\mapsto \frac{1}{x_4}
  \begin{bmatrix}x_0+x_1& x_2+i x_3\\x_2-i x_3& x_0-x_1 \end{bmatrix}.
\end{equation}
Note that the two copies of $\Psi(\mathcal L\setminus (\mathcal L\cap
P (e_4^\perp)))$ are given by the sets of positive definite and
negative definite symmetric $\matSL_2$-matrices.

\subsection{Basic examples}
We first illustrate the \DPW approach for some basic surfaces.  Recall
that for $\spaceH$, the real involution on $\Lambda\matSL_2(\bbC)$ is
given by
\begin{equation}
  X^\ast(\lambda)=e_0\overline{X(-1/\bar\lambda)}^{-1}e_0^{-1},
\end{equation}
and unitary connections are flat connections of the form
\begin{equation}
  d+\eta(\lambda)=d+\eta_0+\lambda^{-1}\eta_{-1}+\lambda\eta_1
\end{equation}
with $\transpose{\ol{\eta_0}}=-\eta_0$ and
$\transpose{\ol{\eta_{-1}}}=\eta_1$ where $\eta_{-1}$ is a nowhere
vanishing $(1,0)$-form with values in the nilpotents.

\subsubsection{The sphere in $\spaceH$}
The simplest example of a \DPW potential is given by
\begin{equation}
  \xi(\lambda)=\begin{bmatrix}0 & \lambda^{-1} \\
  0 & 0\end{bmatrix}dz
\end{equation}
on the complex plane, with \DPW frame
\begin{equation}
  \Phi(\lambda)=\begin{bmatrix}1 & \lambda^{-1} z \\
  0 & 1\end{bmatrix}.
\end{equation}
The $\spaceH$ Iwasawa factorization is given by
\begin{equation}
  \Phi(\lambda)=F(\lambda)B(\lambda)=\frac{1}{\sqrt{1-z\bar z}}
  \begin{bmatrix}1 & \lambda^{-1} z \\
    \lambda \bar z & 1\end{bmatrix}\frac{1}{\sqrt{1-z\bar z}}
    \begin{bmatrix}1 & 0 \\
      -\lambda \bar z & 1-z\bar z\end{bmatrix}
\end{equation}
and taking $\lambda_0=1$ and $\lambda_{1}=-1$ we obtain
\begin{equation}
  \label{eq:H3sphere}
  f=F(1)F(-1)^{-1}=\frac{1}{1-z\bar z}
  \begin{bmatrix} 1+ z\bar z& 2z \\ 2\bar z &1+ z\bar z\end{bmatrix}.
\end{equation}
Restricting to the unit disk $D\subset\bbC$, this is just a
conformally parametrized totally geodesic hyperbolic disk inside
hyperbolic 3-space, with induced metric
\begin{equation}
  \frac{1}{1-z\bar z}\deriv z\otimes \deriv\bar z.
\end{equation}
Note that this example is rather special as we are able to write down
both the \DPW frame and its factorization in terms of elementary
functions.
\begin{remark}
  The surface $f$ has the same \DPW potential as the round minimal
  2-sphere in $\spaceS$, and serves as the simplest example of a
  minimal surface in $\spaceH$. On the other hand, as a map to
  $\spaceH$, $f$ is not well-defined on the whole plane $\bbC$ or
  projective line, but crosses the ideal boundary at $\infty$
  \begin{equation}
    \mathrm{S}^2_\infty=\mathcal L\cap P (e_4^\perp)
  \end{equation}
  along the unit circle $\Sone\subset\bbC$ where the Iwasawa
  decomposition breaks down. By~\eqref{eq:H3sphere} and~\eqref{eq:lightconematrix}
  (or by geometric reasoning) $f$ can be
  extended as a conformal surface into the conformal 3-sphere
  $\mathcal L$, a phenomena which turns out to be typical in the
  examples below.  In the case at hand, we obtain a conformally
  parametrized totally umbilic sphere
  \begin{equation}
    z\in\bbCP^1\mapsto
    [1+z\bar z,\, 0,\, z+\bar z,\, \imi (\bar z-z),\, 1-z\bar z]
    \in\mathcal L.
  \end{equation}
\end{remark}

\subsubsection{Delaunay cylinders in $\spaceH$}

\begin{figure}[b]
  \centering
  \begin{subfigure}[t]{0.49\textwidth}
    \centering
    \includegraphics[width=\textwidth]{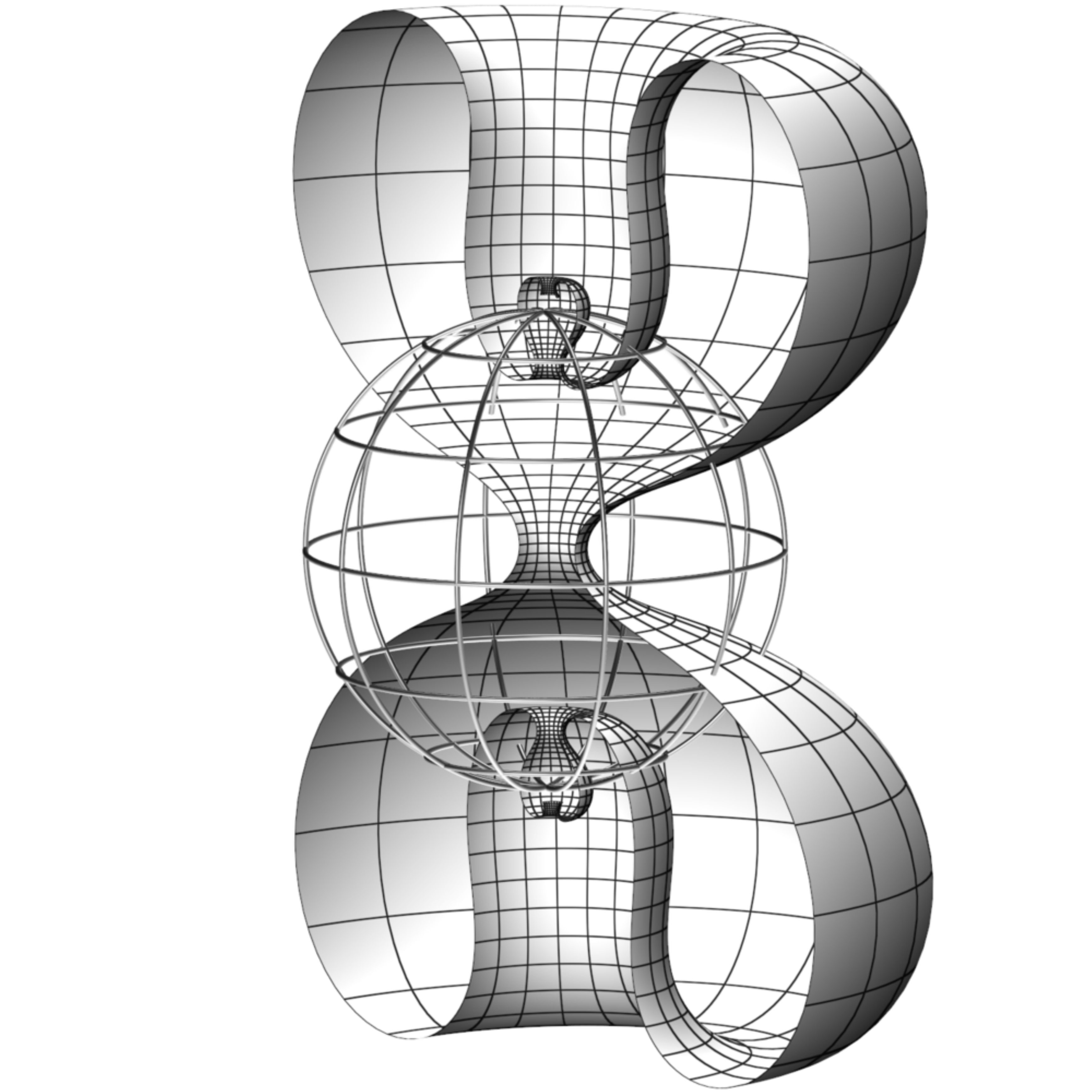}
    \caption{
      \small
      Delaunay surface in
      $\spaceH\cup\Stwo\cup\spaceH.$
      Each of the two ends of this
      surface of revolution oscillates between the two copies of
      $\spaceH$, crossing the ideal boundary infinitely often.
    }
    \label{fig:delaunay-h3}
  \end{subfigure}
  \begin{subfigure}[t]{0.49\textwidth}
    \centering
    \includegraphics[width=\textwidth]{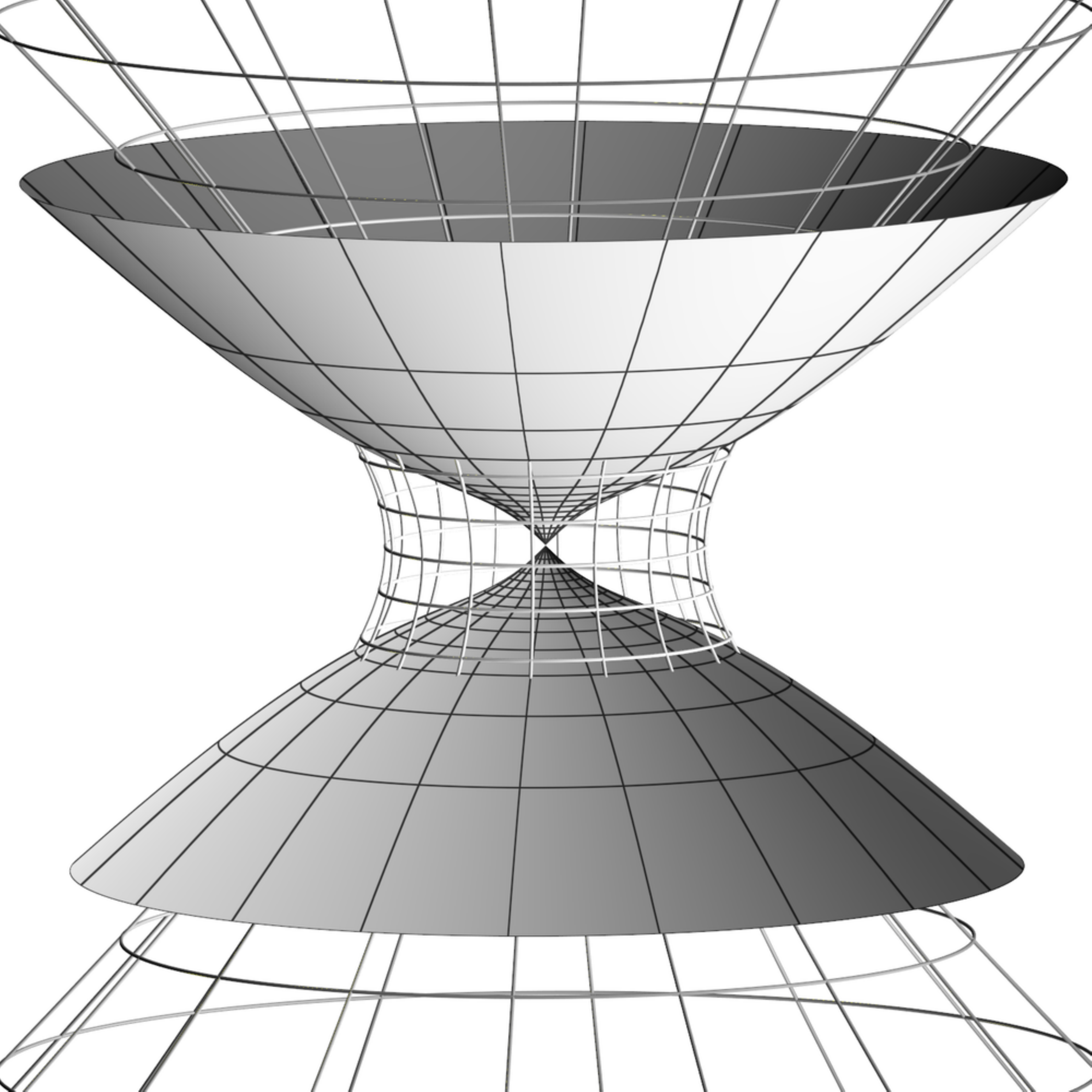}
    \caption{\small
      Minimal Delaunay surface in $\spaceADS.$
      At the cone point (center of image), the profile curve
      of this surface of revolution crosses the revolution axis and the
      surface fails to be immersed.
    }
    \label{fig:delaunay-ads3}
  \end{subfigure}
  \caption{}
  \label{fig:delaunay}
\end{figure}

A less trivial class of surfaces is given by Delaunay surfaces in
$\spaceH$.  Minimal Delaunay cylinders in $\spaceH$
(figure~\ref{fig:delaunay-h3}) were first described
in~\cite{Babich_Bobenko_1993} in terms of their elliptic spectral
data.  They come in a real 1-dimensional family of geometrically
distinct surfaces.  On $\bbC/(2\pi i\bbZ)$, the Hopf
differential of a Delaunay cylinder is a constant multiple of
$(\deriv w)^2$, and the conformal factor is a solution of the
cosh-Gordon equation (remark~\ref{rem:sine}).
The conformal factor can be given
explicitly in terms of the Weierstrass $\wp$-function on a rectangular
elliptic curve; see for example
\cite{Babich_Bobenko_1993,Bakas_Pastras_2016}. The surface is
rotational symmetric, and the conformal factor only depends on one
(real) variable and is periodic --- but blows up once each period where
the surface intersects the ideal boundary at $\infty$
(figure~\ref{fig:delaunay-h3}).

We consider the
Delaunay cylinders parametrized on the two-punctured sphere $\bbC^\ast$,
whose Hopf differential is a constant multiple of
$(\deriv z)^2/z^2$.
To construct this 1-dimensional family on the domain $\bbC^\ast$ via
the \DPW approach take \sym points
\begin{equation}
  \lambda_0=-\lambda_1=1
\end{equation} determined by the mean curvature $H=0$ in the
last column of table~\eqref{eq:sym-point}.  The \DPW potential on the
domain $\bbC^\ast$ is $\imi A\deriv z/z$ where $A$ is a
$z$-independent loop with the following properties (remark~\ref{rem:closing}):
\begin{itemize}
\item
  $q\in\bbR^\ast$ is a branch-point of the spectral curve: $\det A(q) = 0$;
\item
  intrinsic closing condition: $A = A^\ast$; see the third row
  of~\eqref{eq:real-form};
\item
  extrinsic closing condition: eigenvalues of $A(\lambda_0)$ are $\pm\imi/2$.
\end{itemize}
It follows that the eigenvalues of the frame monodromy around the puncture $z=0$
are $\exp(\pm 2\pi \nu)$ where \begin{equation}
  \nu= \tfrac{\imi}{2}\sqrt{
    \tfrac{(\lambda-q)(-\lambda^{-1} - q)}
          {q^2-1}}.
\end{equation}
%
More explicitly, for  $\spaceH$ we may take $A$ to be
\begin{align}
  \quad
  \tfrac{1}{2\sqrt{q^2-1}}
  \begin{bmatrix}0 & \lambda^{-1} + q\\ \lambda - q & 0\end{bmatrix}
\end{align}
constrained by the condition
that the term under the square root is positive, i.e., $\abs{q}>1$.

The \DPW frame $\Phi$ is based at $z=1$, i.e., $\Phi(1) = \id$. Hence
$\Phi = \exp(\imi A \log z)$.

\begin{theorem}
  The \DPW construction applied to the above data gives Delaunay
  cylinders.
\end{theorem}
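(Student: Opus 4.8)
The plan is to check, in turn, that the induced immersion closes over $\bbC^\ast$, that it is equivariant under a one-parameter group of rotations of $\spaceH$ and hence is a minimal surface of revolution, and that its Hopf differential and conformal factor are exactly those characterizing the minimal Delaunay cylinders.

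First I would compute the monodromy. Since $\Phi=\exp(\imi A\log z)$ with $A$ independent of $z$, the monodromy around $z=0$ is $M=\exp(-2\pi A)$, which commutes with $\Phi$; from $A=A^\ast$ (third row of~\eqref{eq:real-form}) one gets $M^\ast=\exp(-2\pi A^\ast)=M$, so $M$ is $\spaceH$-unitary and intrinsic closing holds. The extrinsic closing condition says that $A(\lambda_0)=A(1)$ has eigenvalues $\pm\imi/2$, so $M(1)=\exp(-2\pi A(1))$ has both eigenvalues equal to $-1$ and, being diagonalizable, equals $-\id$; the involution $\lambda\mapsto-1/\ol\lambda$ interchanges $\lambda_0=1$ and $\lambda_1=-1$, and the explicit form of $A$ gives $\det A(-1)=\tfrac14$ as well, so likewise $M(-1)=-\id$. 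Hence $M(\lambda_0)=M(\lambda_1)=-\id$ and remark~\ref{rem:closing} yields an immersion of $\bbC^\ast$; since $\lambda_0=-\lambda_1=1$ it has mean curvature $H=0$. Because $\Phi(1)=\id$, the $\spaceH$-Iwasawa factorization exists near $|z|=1$, so the immersion is genuine there, degenerating on circles $|z|=\mathrm{const}$ where the surface meets the ideal boundary, as in figure~\ref{fig:delaunay-h3}.

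Next I would exploit the rotational symmetry. The substitution $z\mapsto e^{\imi\theta}z$ sends $\log z\mapsto\log z+\imi\theta$, so $\Phi(e^{\imi\theta}z)=C(\theta)\Phi(z)$ with $C(\theta):=\exp(-\theta A)$, which is again $\spaceH$-unitary because $A=A^\ast$. If $\Phi=FB$ is the Iwasawa factorization, then $C(\theta)\Phi=(C(\theta)F)B$ with $C(\theta)F$ unitary and the $\Lambda_+\matSL_2(\bbC)$-factor $B$ literally unchanged, so $F(e^{\imi\theta}z)=C(\theta)F(z)$. Feeding this into the \sym formula~\eqref{eq:sym} gives $f(e^{\imi\theta}z)=C(\theta)(\lambda_0)\,f(z)\,C(\theta)(\lambda_1)^{-1}$, and evaluating the relation $C(\theta)^\ast=C(\theta)$ at the interchanged points $\lambda_0,\lambda_1$ shows $C(\theta)(\lambda_1)^{-1}=\transpose{\ol{C(\theta)(\lambda_0)}}$. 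Hence $f(e^{\imi\theta}z)=g_\theta\cdot f(z)$, where $g_\theta:=C(\theta)(\lambda_0)=\exp(-\theta A(1))$ acts as an isometry of the matrix model of $\spaceH$; since $\theta\mapsto g_\theta$ is a one-parameter subgroup and $A(1)$ has purely imaginary nonzero eigenvalues, $\{g_\theta\}$ is a one-parameter group of rotations about a fixed geodesic. Therefore $f$ is a minimal surface of revolution in $\spaceH$ on the twice-punctured sphere $\bbC^\ast$.

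Finally I would identify $f$ with the Delaunay cylinder. By remark~\ref{rem:hopf} the Hopf differential is a constant multiple of the $\lambda^{-1}$-coefficient of $\det\xi=-\det A\,(\deriv z)^2/z^2$; since $\det A(q)=0$ fixes the spectral curve and the $\lambda^{-1}$-coefficient of $-\det A$ equals $-q/(4(q^2-1))\ne0$, the Hopf differential is a nonzero constant multiple of $(\deriv z)^2/z^2$. By the equivariance above, the conformal factor $v^2$ depends only on $|z|$, so after the coordinate change of remark~\ref{rem:sine} it is a periodic one-variable solution of the cosh-Gordon equation with singularities at the degeneracy circles. These are precisely the invariants of the minimal Delaunay cylinders of~\cite{Babich_Bobenko_1993}; the associated spectral curve $y^2=\mathrm{const}\cdot\lambda(\lambda-q)(q\lambda+1)$ (with $y=\lambda\nu$) is the rectangular elliptic curve appearing there, and letting $q$ range over $|q|>1$ recovers their one-parameter family. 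The main obstacle, I expect, is the equivariance step, together with the bookkeeping needed to match the parameter $q$ with the classical family; the closing conditions and the Hopf-differential computation are routine.
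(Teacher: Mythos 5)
Your argument is correct and takes essentially the same route as the paper's proof: intrinsic and extrinsic closing follow from $A=A^\ast$ and the eigenvalues $\pm\imi/2$ of $A(\pm1)$, and rotational equivariance follows from $\Phi(e^{\imi\theta}z)=\exp(-\theta A)\Phi(z)$ with unitary left factor passing through the Iwasawa decomposition (your sign $M=\exp(-2\pi A)$ versus the paper's $\exp(2\pi A)$ is only an orientation convention and harmless). Your final step — pinning down the Hopf differential, the one-variable cosh-Gordon conformal factor and the spectral curve to match the Babich--Bobenko family — goes beyond what the paper's proof records (it stops at closing plus the surface-of-revolution property), and is a reasonable, if loosely argued, supplement rather than a different method.
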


\begin{proof}
  The \DPW frame for the potential $\xi$ is $\Phi = \exp(\imi A \log
  z)$.  The monodromy of $\Phi$ around the puncture $z=0$ is $M =
  \exp(2\pi A)$, satisfying the intrinsic closing condition $M^\ast =
  M$ due to the symmetry of $A$, and the extrinsic closing condition
  (on the cylinder) $M(\lambda_0) = M(\lambda_1) = -\id$ due to the
  fact that the eigenvalues of $A$ are $\pm\imi/2$ at each \sym point.
  Hence the surface closes on $\bbC^\ast$.

  To show that the induced surface is a surface of revolution, changing
  coordinates $x+\imi y = \imi \log z$, then $\Phi = \exp( (x+\imi y)A)
  = \exp(x A)\exp(\imi y A)$.  Since $\exp(x A)$ is unitary, then the
  unitary factor in the Iwasawa decomposition of $\Phi$ is $F(x,\,y) =
  \exp(x A) G(y)$, where $G$ is the unitary factor of $\exp(\imi y A)$.
  Hence $F$ is equivariant.  Hence the surface $F_{\lambda_0}F^{-1}_{\lambda_1}$
  is equivariant with equivariant action
  on the profile curve $X = X(y) = G_{\lambda_0}G^{-1}_{\lambda_1}$
  given by
  \begin{equation}
    X \mapsto \exp(x A(\lambda_0)) X \exp(-x A(\lambda_1)).
  \end{equation}
  This action has closed orbits with period
  $x\in[0,\,2\pi]$ because the eigenvalues of $A(\lambda_0)$ and
  $A(\lambda_1)$ are $\pm\imi/2$.
\end{proof}
\begin{remark}
  It is possible to compute the unitary factor $G$ of $\exp(\imi y A)$
  explicitly in terms of elliptic functions. It turns out that $G$ is
  quasiperiodic in $y$ (i.e., it is equivariant, and the period
  depends on $q$), and that the Iwasawa decomposition fails twice
  in each period. On the other hand, using~\eqref{eq:lightconematrix} it is
  possible to extend the Delaunay surface to a conformal immersion of
  the cylinder $\bbC^\ast$ into $\spaceS$; see
  also~\cite[$\S6$]{Babich_Bobenko_1993} and figure~\ref{fig:delaunay-h3}.
\end{remark}
\begin{remark}
  It is worth noting that the intersection of a Delaunay cylinder with
  the boundary at infinity is the disjoint union of circles.  This
  follows from the fact that Delaunay cylinders are equivariant.  If
  we restrict to one component of the Delaunay cylinder inside
  $\spaceH$ we obtain exactly two boundary circles, which define a
  Riemann surface of annulus type. It would be interesting to work out
  in detail the relation between the free parameter $q$ and the
  modulus of the annulus.
\end{remark}

\subsection{$n$-noids}

An \emph{$n$-noid} in $\bbR^3$ is a minimal immersion of a $n$
punctured Riemann sphere, each of whose end monodromies has Delaunay
eigenvalues, that is, the same eigenvalues as those of a Delaunay
cylinder.

In the previous example we have seen that a Delaunay end in $\spaceH$
cannot be defined on a punctured disk when we consider the surface
lying only in $\spaceH$. We therefore have to modify our definition:
\begin{definition}
  \label{def:nnoid}
  An \emph{$n$-noid} in $\spaceH$ is a conformal immersion
  \begin{equation}
    f\colon \bbCP^1\setminus\{p_1, \dots, ,p_n\}\to \mathcal L\cong \spaceS
  \end{equation}
  such that
  \begin{enumerate}
  \item the intersection \begin{equation}
    \text{image}(f)\setminus(\mathcal L\cap P (e_4)^\perp)) =
    \text{image}(f)\setminus S^2=\text{image}(f)\cap(\spaceH\cup\spaceH)
  \end{equation}
    is a (not necessarily connected) minimal surface;
  \item the surface has Delaunay eigenvalues around each end $p_k$.
  \end{enumerate}
\end{definition}
\begin{remark}
  It is necessary to explain the second condition in more detail. In
  general, if the surface passes through the boundary at infinity, the
  associated family of flat connections (remark~\ref{rem:assofami})
  does not exist on the $n$-punctured sphere
  $\bbCP^1\setminus\{p_1, \dots, p_n\}$ and it is therefore not obvious in
  which sense one should test the second condition. For example, one
  should expect that the intersection of
  $\text{image}(f)\cap(\spaceH\cup\spaceH)$ around an end $p_k$ is
  defined on a nested union of disjoint topological annuli, and
  a priori it is unclear why the eigenvalues of the monodromy on all
  annuli are the same.  On the other hand, using condition (1), the
  surface $f$ is a Willmore surface in $\spaceS$ and has an associated
  family of flat $\matSL(4,\bbC)$-connections which reduces (in a
  $\lambda$-dependent way) to the associated family of rank 2
  connections of the minimal surface in $\spaceH$ on the corresponding
  subset.  In this way, it can be shown that the monodromy
  representation up to conjugation is well-defined;
  for details see~\cite{Heller_Heller_Ndiaye_2018}.

  On the other hand, for minimal surfaces constructed from \DPW
  potentials the monodromies of the potential and the associated
  family agree up to conjugation, and the eigenvalue condition can
  therefore be checked directly on the potential.
\end{remark}
\begin{example}
  All minimal Delaunay cylinders are 2-noids. In fact, it follows
  from~\cite{Babich_Bobenko_1993} that these surfaces can be extended
  through the boundary at infinity to give a (Moebius-)periodic
  surface into $\spaceS$ from the two-punctured sphere.
\end{example}
So far we only have numerical evidence of the existence of $n$-noids
in $\spaceH$ (figures~\ref{fig:noid-h3},~\ref{fig:noid-h3-iso}
and~\ref{fig:trinoid-h3-halfspace}). We are therefore forced
to give the following weaker definition.
\begin{definition}
  \label{def:openn}
  An \emph{open $n$-noid} in $\spaceH$ is a conformal minimal immersion of a
  $n$-holed sphere
  \begin{equation}
    f\colon \bbCP^1\setminus (D_1\cup \cdots \cup D_n)\to\spaceH
  \end{equation}
  for non-intersecting topological disks $D_k\subset\bbCP^1$, such
  that the monodromy eigenvalues around each hole $D_k$
  are the monodromy eigenvalues of a
  Delaunay cylinder for some $q\in\bbR$ with $\abs{q}>1$.
\end{definition}
\begin{remark}
  Not every $n$-noid in $\spaceH$ is an open $n$-noid in the above
  sense. For example, it might be the case that two or more {\em ends}
  of the $n$-noid start in each of the two $\spaceH$ copies inside the
  conformal 3-sphere, while the two pieces are joined by a surface
  which is close to a part of a round sphere. In fact, such examples
  can be constructed by the methods below. On the other hand, we will
  construct open $n$-noids in theorem~\ref{thm:main}. We conjecture that
  those surfaces are $n$-noids in the sense of definition~\ref{def:nnoid}
  (figures~\ref{fig:noid-h3},~\ref{fig:noid-h3-iso}
  and~\ref{fig:trinoid-h3-halfspace}).
\end{remark}

\subsubsection{3-noids}


A potential for trinoids is
\begin{equation}
  \label{eq:trinoid}
  \begin{bmatrix}0 & \lambda^{-1}\\ f(\lambda) Q & 0\end{bmatrix}\deriv z,
\end{equation}
where $Q\deriv z^2$ is a holomorphic quadratic differential with three
double poles and real quadratic residues, $\lambda_0,\,\lambda_0^{-1}$
are the \sym points, and for the ambient space $\spaceH$
\begin{align}
  \quad
  f = (\lambda -1)(\lambda + 1).\quad
\end{align}
Since at each of its poles the potential is gauge equivalent to a
perturbation of a Delaunay potential, by the theory of regular
singular points, each monodromy around a puncture has Delaunay
eigenvalues. This potential constructs CMC trinoids if the closing
conditions of remark~\ref{rem:closing} are satisfied,
as shown by the following theorem.
\begin{theorem}
  \label{thm:trinoidH}
  There exists a real 1-parameter family of \DPW potentials on the
  3-punctured sphere satisfying the intrinsic and extrinsic closing
  conditions for minimal surfaces in $\spaceH$.
\end{theorem}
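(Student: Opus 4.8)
The plan is to reduce the closing conditions to a system of equations in the unknown loop data and solve it by a parameter count combined with an implicit-function-theorem argument, following the strategy used for trinoids in $\bbR^3$ and $\Stwo$. First I would fix the conformal type: place the three punctures at $0$, $1$, $\infty$ on $\bbCP^1$, so that the quadratic differential $Q\deriv z^2$ with three double poles and prescribed real quadratic residues $r_1,r_2,r_3$ is determined up to the residue data, with the third residue constrained by the residue theorem. The monodromy of $\Phi$ around each puncture is, up to conjugation, governed by the local eigenvalue exponent $\nu_k$, and the Delaunay eigenvalue condition is exactly the statement that each $\nu_k$ has the Delaunay form coming from some $q_k$ with $\abs{q_k}>1$; since the potential near each pole is gauge equivalent to a perturbation of a Delaunay potential, the theory of regular singular points (as already invoked in the paragraph preceding the theorem) gives this automatically once the residues are real, so condition (2) of the target surfaces is free.

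The substantive content is the closing of the global monodromy. Writing $M_0, M_1, M_\infty$ for the three monodromy generators (with $M_0 M_1 M_\infty = \id$), intrinsic closing requires each $M_k$ to be $\spaceH$-unitary, i.e.\ $M_k^\ast = M_k$ for the involution in the third row of~\eqref{eq:real-form}, and extrinsic closing requires $M_k(\lambda_0) = M_k(\lambda_1) \in \{\pm\id\}$ at the \sym points $\lambda_0 = -\lambda_1 = 1$. I would exploit the symmetry: restricting attention to the equilateral case with a $\bbZ_3$ rotational symmetry cyclically permuting the punctures collapses the three residues to a single real parameter and the three monodromies to a single conjugacy class, so the whole problem becomes a one-parameter family of conditions on one monodromy matrix-valued loop, with the reality/unitarity condition the only nontrivial equation. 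The key step is then to show that for each admissible value of the Delaunay parameter $q$ there is a dressing (or a choice of the free gauge/integration constants in the off-diagonal entry $f(\lambda)Q$) making the monodromy unitary at the two \sym points simultaneously; here one uses that at $\lambda_0=1$ and $\lambda_1=-1$ the factor $f(\lambda)=(\lambda-1)(\lambda+1)$ vanishes, so the potential degenerates and the $\lambda_0,\lambda_1$-monodromies become explicitly computable, forcing $M_k(\pm1) = \pm\id$ essentially for free from the residue normalization. The unitarizability along $\Sone$ is then a real-codimension condition which I would meet by a continuity/degree argument, starting from an explicit unitary solution at a limiting value of $q$ and continuing.

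The main obstacle I expect is precisely the simultaneous unitarization of the monodromy along all of $\Sone$: unlike the pointwise conditions at the \sym points, this is an infinite system, and the standard route — realize the monodromy group as a representation into the $\spaceH$-unitary loop group via an Iwasawa/dressing argument and apply an implicit function theorem around a symmetric starting solution — requires controlling the linearization of the monodromy map in the perturbation parameters, i.e.\ a transversality statement for the period map. I would handle this by identifying the tangent space to the space of potentials (finite-dimensional after fixing conformal type and residues: essentially the coefficients of the off-diagonal entry's $\lambda$-expansion) and showing the derivative of the unitarity defect is onto, using the non-degeneracy of $\DOT{\eta_{-1}}{\eta_{-1}^\ast}$ and the explicit Delaunay asymptotics. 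The equilateral symmetry reduction should make this linear-algebra step tractable, and the real $1$-parameter family in the statement is then the image of the $q$-axis, $\abs{q}>1$, under this construction.
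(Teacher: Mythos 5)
Your proposal has the right scaffolding (fix the conformal type, note that the Delaunay eigenvalue behaviour and the extrinsic condition come for free because $f(\pm 1)=0$ makes the potential nilpotent at the \sym points — this part is correct and matches the paper), but it misses the actual technical content of the theorem. After fixing the shape \eqref{eq:trinoid} of the potential, the free parameters are finitely many real residues, while unitarity of the monodromy is a condition for every $\lambda$ (and for $\spaceH$ the involution couples $\lambda$ with $-1/\ol{\lambda}$, so it is not even a pointwise condition on $\Sone$). Hence the step ``show the derivative of the unitarity defect is onto'' cannot be carried out: a map from a finite-dimensional parameter space into a loop's worth of conditions never has surjective differential. (Traizet's implicit-function argument, used for theorem~\ref{thm:main}, works only because the coefficients are allowed to be $\lambda$-dependent elements of a Banach space $\mathcal B^\epsilon$, which you explicitly exclude.) The paper's proof avoids any IFT: with three punctures the representation is determined up to conjugation by its traces, which are automatically compatible with the involution thanks to the symmetries \eqref{eq:trinoid-potential-symmetry}; the genuine problem, stressed in remark~\ref{rem:kobayashi}, is to produce a unitarizer lying in $\Lambda_+\matSL_2(\bbC)$, since an arbitrary unitarizer need not admit an Iwasawa factorization and then no surface is obtained. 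This is solved by the isosceles reduction, which forces the monodromies into the form \eqref{eq:trinoid-monodromy} and allows an explicit diagonal unitarizer via a scalar Birkhoff factorization (lemma~\ref{lem:trinoid-monodromy}), whose existence is decided by the sign homomorphism on $\loopgroup$ applied to the trace polynomial (lemma~\ref{lem:phi}). Your proposal contains no mechanism playing this role.

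Moreover, your chosen symmetry reduction is the wrong one for $\spaceH$. For equilateral residues the leading coefficient in \eqref{eq:phi-series} is $c=\tfrac{3\pi^4}{64}a^4>0$, so in the perturbative regime $\sign[\varphi]=+1$, whereas the $\spaceH$ real form ($\delta=-1$, $\epsilon=+1$) requires $\sign=\delta\epsilon=-1$. This is exactly why theorem~\ref{thm:trinoid} yields only isosceles non-equilateral trinoid potentials in $\spaceH$ directly (region $\abs{b}>2\abs{a}$), and why the equilateral $\spaceH$ trinoids of theorem~\ref{thm:trinoid-h3} are obtained indirectly, by simple-factor dressing of equilateral $\spaceDS$ trinoids, which swaps the two real forms and needs an $r$-Iwasawa factorization on a smaller circle. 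Consequently the ``explicit unitary solution at a limiting value of $q$'' from which you propose to continue lives in the wrong real form, and your continuity/degree continuation has no admissible starting point. (A small further inaccuracy: the three quadratic residues of $Q\,\deriv z^2$ are independent; there is no residue-theorem constraint on them.)
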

The technical proof of the theorem is given in section~\ref{sec:g3noid} below.

\subsection{Existence of $n$-noid potentials with small necksize}

\begin{figure}[b]
  \centering
    \includegraphics[width=0.49\textwidth]{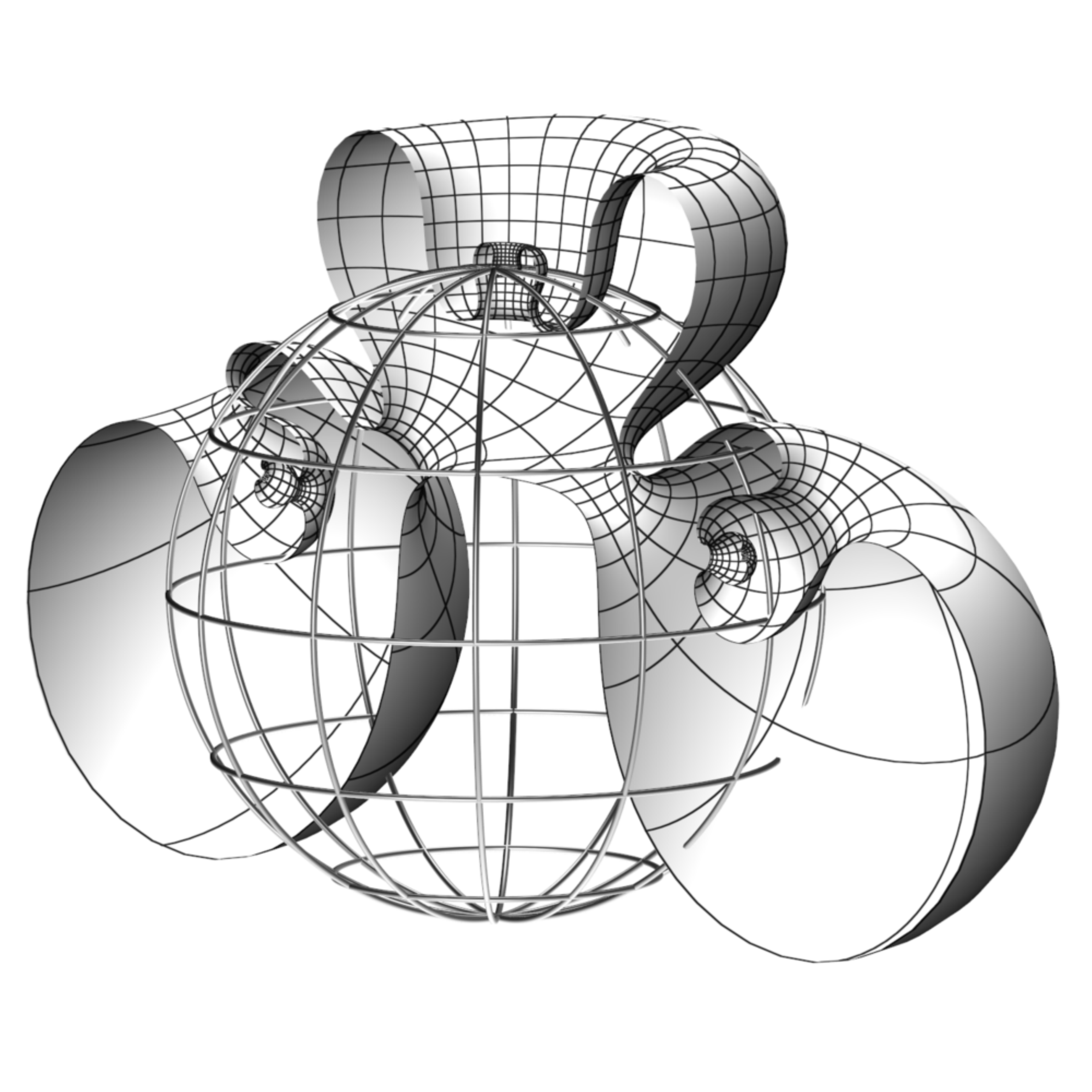}
    \includegraphics[width=0.49\textwidth]{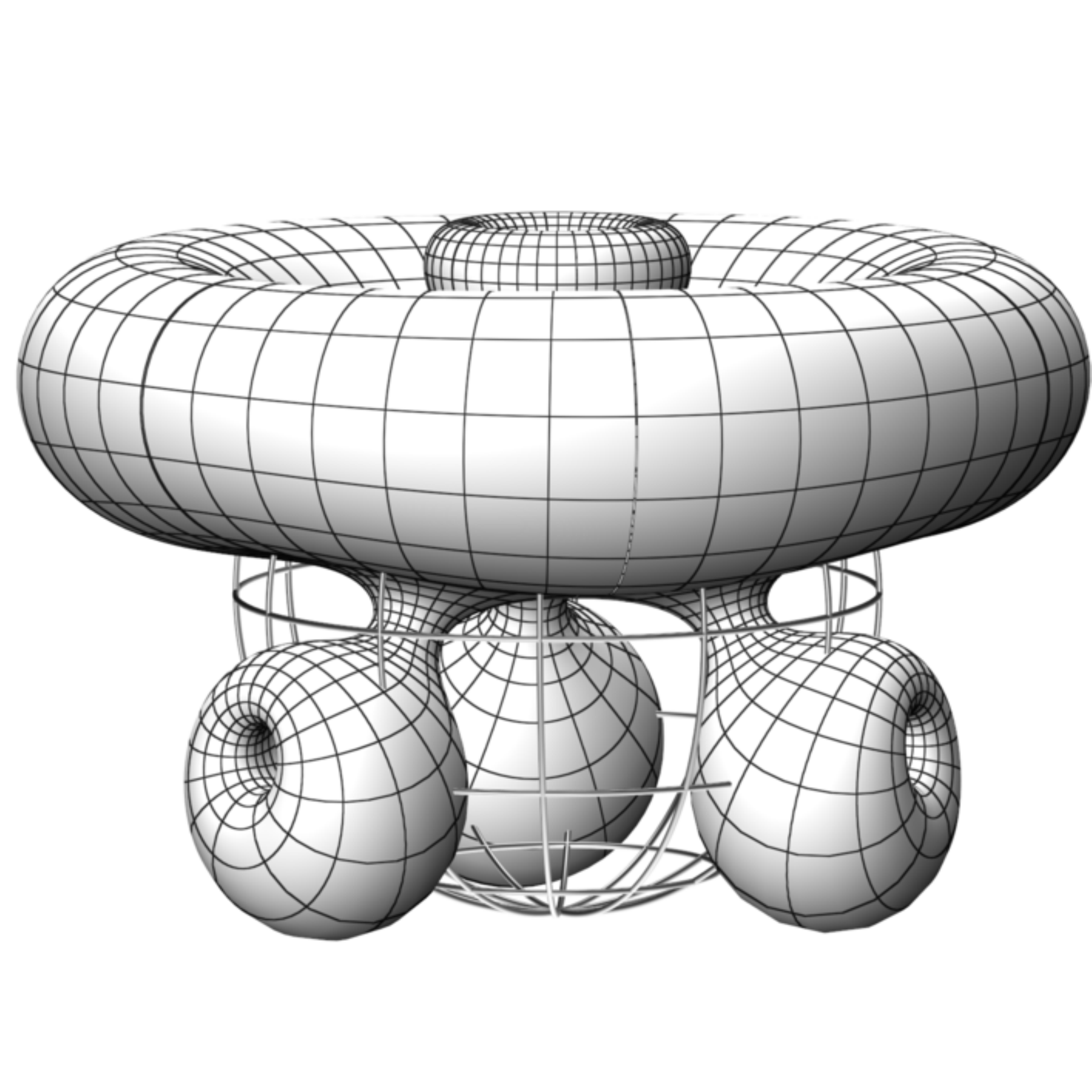}
  \caption{\small
    Trinoid and fournoid in $\spaceH\cup\Stwo\cup\spaceH$
    with respective cyclic symmetries of orders $2$ and $3$
    about the vertical axis. Neither surface is equilateral.}
  \label{fig:noid-h3-iso}
\end{figure}

We adopt the techniques of Traizet~\cite{Traizet_2017} to prove the
existence of open $n$-noids in $\spaceH$.  We conjecture that these
surfaces are $n$-noids in the sense of definition~\ref{def:openn} as well.

In~\cite{Traizet_2017}, Traizet showed the existence of \DPW
potentials for constant mean curvature $n$-noids in $\bbR^3$ by
deforming the \DPW potential of the round sphere. His method of solving
the monodromy problem for the intrinsic and extrinsic closing
conditions can be easily translated to our setup, with basically
identical proofs up to minor changes. Additionally, one can deduce
that the Iwasawa factorization works on a subset homeomorphic to a
$n$-holed sphere, which yields actually examples of open $n$-noids in
$\spaceH$.  A formally similar method of deforming surfaces in
$\spaceS$ and $\spaceH$ has been introduced
in~\cite{Heller_Heller_Schmitt_2018} and~\cite{Heller_Heller_2018}
respectively.

We start with a potential
\begin{equation}
  \label{eq:H3potential}
  \xi_t(z):=
  \begin{bmatrix} 0 &\lambda^{-1} \deriv z\\ t (\lambda^2+1) \omega(z)\end{bmatrix}
\end{equation}
where
\begin{equation}
  \omega=
  \sum_{k=1}^n \bigg(\frac{a_k}{(z-z_k)^2}+\frac{b_k}{z-z_k} \bigg)\deriv z.
\end{equation}
We call $\mathbf{x}=(a_1,\dots,z_n)$ the parameter of the potential.
Note that in~\eqref{eq:H3potential} we take the \sym points to be
$\lambda_0=\imi$ and $\lambda_1=-\imi$ in order to have more natural
reality conditions.  As in~\cite{Traizet_2017}, we need to allow that
the coefficients $a_k,\, b_k,\, z_k$ are holomorphic functions in
$\lambda$, i.e., they are holomorphic on an open neighborhood of the
closed unit disk in the $\lambda$-plane. They need to be adjusted for
small $t>0$ such that the intrinsic closing condition is satisfied: we
want to find for small $t>0$ holomorphic functions $a_k,\, b_k,\,
z_k$, which are close to constant functions (satisfying a constraint
related to some balancing formula) , such that the monodromy based at
$z=0$ of the potential~\eqref{eq:H3potential}
is in the unitary loop group determined
by~\eqref{eq:real-form} corresponding to the symmetric space
$\spaceH$. As we suppose that the functions $z_k$ are close enough to
constants, the potential is well-defined on a $n$-holed sphere for all
$\lambda\in \{\lambda\in\bbC^\ast\mid 0<\abs{\lambda}<1+\epsilon\}$
for some $\epsilon>0$, and the monodromy is computed on this $n$-holed
sphere. We call $\mathbf{x}=(t,a_1, \dots, z_1, \dots)$ the parameter of the
potential, even in the case when $a_1, \dots, z_1,\dots$ depend on $\lambda$.

For $\epsilon>0$, we denote by $\mathcal B^\epsilon$ the Banach space
of holomorphic functions on
\begin{equation}
  \{\lambda\in\bbC\mid \abs{\lambda}<1+\epsilon\},
\end{equation}
equipped with the generalized Wiener norm as in~\cite[$\S$4]{Traizet_2017}.
\begin{lemma}\label{lem:potential}
  For $k=1, \dots, n$ let $\tau_k\in\bbR\setminus\{0\}$ and $p_k\in
  \bbC\setminus(\{0\}\cup \Sone$ such that
  \begin{equation}
    \label{eq:weight}
    0=\sum_{k=1}^n
    \frac{2\tau_k \overline{p_k}}{1-\abs{p_k}^2}=\sum_{k=1}^n\tau_k
    \frac{1+\abs{p_k}^2}{1-\abs{p_k}^2}=\sum_{k=1}^n\frac{2\tau_k p_k}{1-\abs{p_k}^2}.
  \end{equation}
  Then there exists $\epsilon$, $T>0$ and unique smooth maps
  \begin{equation}
    b_k, z_k\colon [0;T[\ \to\ \mathcal B^\epsilon,\quad k = 1, \dots, n
  \end{equation}
  with \begin{equation}
    b_k(0)=\frac{2\tau_k \overline{p_k}}{1-\abs{p_k}^2},\quad z_k(0)=p_k
  \end{equation}
  for $k=1, \dots, n$, and $z_k(0)(0)=p_k$ for $k=1, \dots, n$, and a smooth functions
  $\tau_k\colon[0,T[\to\bbR$ with $\tau_k(0)=\tau_k,$ $k=1, \dots, n$ such that for
      $t\in[0;T[$ the potential~\eqref{eq:H3potential} with parameter
          \begin{equation}
            \mathbf{x}=(t,\tau_1, \dots, \tau_{n},a_n(t),b_1(t), \dots, z_n(t))
          \end{equation}
          has $\spaceH$-unitary monodromy at $z=0$.
\end{lemma}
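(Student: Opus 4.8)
The plan is to set this up as an implicit function theorem argument in the spirit of Traizet, where the unknowns are the holomorphic $\lambda$-dependent coefficients $b_k, z_k$ (and the residual real parameters $\tau_k$) and the equation to be solved is the vanishing of the off-diagonal part of the logarithm of the monodromy at $z=0$. First I would recall that the monodromy $M_k(\lambda)$ of the potential $\xi_t$ around the puncture $z_k$ is, for $t=0$, trivial (the potential is nilpotent upper-triangular, so $\Phi$ is unipotent and the monodromy is $\id$), and that for small $t$ one has an expansion $M_k = \id + t\, M_k^{(1)} + O(t^2)$ where $M_k^{(1)}$ is computed by a single residue integral: it is essentially $2\pi\imi$ times the residue of the linearized parallel transport, which picks out the coefficient $a_k$ (double pole) and $b_k$ (simple pole) of $\omega$ together with the contribution of the nilpotent $\lambda^{-1}\deriv z$ term. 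The total monodromy around $z=0$ is the ordered product of the $M_k$ around a loop enclosing all punctures, so to first order in $t$ it is $\id + t \sum_k M_k^{(1)} + O(t^2)$. The $\spaceH$-unitarity condition $M^\ast = M$, after taking logarithms, becomes a condition on $\log M$ lying in the fixed Lie subalgebra of the involution in~\eqref{eq:real-form}; the component of $\log M$ transverse to this subalgebra is the quantity we must kill.

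Next I would organize the unknowns and the equations so that the linearization at $t=0$ is an isomorphism. Following Traizet, one fixes the diagonal (logarithm-of-eigenvalue) data — these encode the Delaunay eigenvalues and are handled by choosing $a_k$ appropriately as a function of the other data, so the eigenvalue / regular-singular structure is automatic — and treats the off-diagonal entries of the $M_k$ as the map whose zero we seek. The key computation is that at $t=0$, differentiating the transverse-to-unitary component of $\log M$ with respect to $(b_k, z_k)$ yields, after using the residue formulas, a linear map on $\bigoplus_k \mathcal B^\epsilon$ which is block-triangular / invertible precisely when the balancing conditions~\eqref{eq:weight} hold: the three sums in~\eqref{eq:weight} are exactly the three scalar obstructions (corresponding to the three relevant entries of the $\matsl_2$-valued residue, or equivalently the evaluations at the two \sym points $\lambda = \pm\imi$ and the "force" balancing) that must vanish for the constant-function ansatz $b_k(0) = 2\tau_k\overline{p_k}/(1-|p_k|^2)$, $z_k(0)=p_k$ to solve the equation to zeroth order in $t$. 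With~\eqref{eq:weight} in force, the zeroth-order solution exists; then the implicit function theorem on the Banach space $\mathcal B^\epsilon$ — using that the monodromy depends smoothly (indeed analytically) on the parameters, and that the linearized operator is a bounded invertible operator — produces the unique smooth family $b_k(t), z_k(t), \tau_k(t)$ for $t \in [0, T[$, close to the constant initial data. The domain issue (the potential being well-defined on an $n$-holed sphere for $|\lambda| < 1+\epsilon$) is handled exactly as stated in the paragraph preceding the lemma: since the $z_k(t)$ stay close to the distinct constants $p_k \notin \{0\}\cup\Sone$, small disks around them remain disjoint and avoid $0$, so the monodromy at $z=0$ is well-defined on that $n$-holed sphere.

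The main obstacle, and the place where real work is needed, is the explicit computation of the linearization of the (transverse component of the) monodromy logarithm at $t=0$ and the verification that it is a Banach-space isomorphism under hypothesis~\eqref{eq:weight}. Concretely: one must expand the parallel transport $\Phi$ around $z=0$ to first order in $t$, extract the residues at each $z_k$ carefully keeping track of the $\lambda$-dependence (the factor $t(\lambda^2+1)\omega$ against the nilpotent $\lambda^{-1}\deriv z$ produces the $\lambda$-polynomial structure that, evaluated at the \sym points $\pm\imi$, makes the extrinsic closing fall out of the intrinsic one — note $\lambda^2 + 1$ vanishes at $\lambda = \pm\imi$, which is precisely why these \sym points were chosen), and then show that the resulting finite-dimensional-looking obstruction map, which a priori takes values in $\mathcal B^\epsilon$, actually has invertible derivative. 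Here one invokes Traizet's argument essentially verbatim: the derivative is, modulo lower-order terms, multiplication by an explicit invertible matrix built from the $p_k$ and a Cauchy-type kernel, and~\eqref{eq:weight} is exactly the solvability condition that removes its cokernel. Since the paper states the proof is "basically identical" to~\cite{Traizet_2017} "up to minor changes," I would present this computation in the parallel structure to that reference, flag the sign changes coming from the $\spaceH$ real form~\eqref{eq:real-form} (the $-1/\overline\lambda$ versus $1/\overline\lambda$) and from the \sym-point normalization $\lambda_0 = \imi$, and otherwise defer to the cited balancing and implicit-function machinery.
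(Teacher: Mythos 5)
Your proposal follows the same route as the paper: the paper's proof simply defers to Traizet's implicit function theorem argument (Propositions 3 and 4 of~\cite{Traizet_2017}), adapted to the $\spaceH$ real form via the star operator $f^\ast(\lambda)=\overline{f(-1/\bar\lambda)}$ and the hypothesis $\abs{p_k}\neq 1$, with the reality and $\lambda$-independence of the $\tau_k$ extracted from the eigenvalues of the local monodromies — exactly the structure you outline. Your sketch is a correct (and more detailed) reconstruction of that same argument, so there is nothing methodologically different to compare.
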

\begin{proof}
  The proof is analogous to the proof of~\cite[Proposition 3]{Traizet_2017},
  under the extra assumption that $\abs{p_k}\neq1$
  for $k=1, \dots, n$, and using the adapted $\ast$-operator on functions,
  i.e.\ $f^\ast(\lambda)=\overline{f(-1/\bar\lambda)}$. The proof that the functions $\tau_k$ are independent
 of $\lambda$ and real-valued can be done similarly to the proof of~\cite[Proposition 4]{Traizet_2017} by  looking at  the eigenvalues of the local monodromies.
\end{proof}
As a corollary we obtain our main theorem:
\begin{theorem}
  \label{thm:main}
  For every $n>1$ there exist open minimal $n$-noids in $\spaceH$.
\end{theorem}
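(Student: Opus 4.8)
The plan is to supply suitable balancing data to Lemma~\ref{lem:potential}, observe that the extrinsic closing condition is automatic for the potential~\eqref{eq:H3potential}, and then control the Iwasawa factorization by a continuity argument in the spirit of Traizet~\cite{Traizet_2017}. The first point is the solvability of the balancing condition~\eqref{eq:weight} for every $n>1$. Setting $v(p)=\bigl(\tfrac{1+\abs{p}^2}{1-\abs{p}^2},\,\tfrac{2p}{1-\abs{p}^2}\bigr)\in\bbR^{1,2}$, one checks that $v(p)$ lies on the hyperboloid $\{x_0^2-\abs{\vec x}^2=1\}$, on the future sheet if $\abs p<1$ and on the past sheet if $\abs p>1$, so that~\eqref{eq:weight} is precisely the requirement $\sum_{k=1}^n\tau_k v(p_k)=0$ in $\bbR^{1,2}$. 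Such configurations exist for all $n>1$: for $n=2$ take $0<\abs{p_1}<1$, $p_2=1/\ol{p_1}$ and $\tau_1=\tau_2=1$, so that $v(p_2)=-v(p_1)$; for $n\ge3$ take distinct nonzero $p_1,\dots,p_{n-1}$ in the open unit disk, \emph{not} arranged symmetrically about $0$, put $\tau_1=\dots=\tau_{n-1}=1$, let $(c,\vec s):=\sum_{k<n}v(p_k)$ (a future timelike vector with $\vec s\ne0$), and take $\tau_n:=\sqrt{c^2-\abs{\vec s}^2}>0$ together with the unique $p_n$ --- automatically finite, with $\abs{p_n}>1$, and distinct from $p_1,\dots,p_{n-1}$ --- for which $v(p_n)=-(c,\vec s)/\tau_n$.

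Fixing such data, Lemma~\ref{lem:potential} yields, for $t\in[0,T[$, a \DPW potential~\eqref{eq:H3potential} on the punctured sphere $\bbCP^1\setminus\{z_1(t),\dots,z_n(t)\}$, with $z_k(t)$ close to $p_k$, whose monodromy at $z=0$ is $\spaceH$-unitary; this is the intrinsic closing condition. The extrinsic closing condition at the \sym points $\lambda_0=\imi$, $\lambda_1=-\imi$ is automatic: the coefficient $t(\lambda^2+1)\omega$ in~\eqref{eq:H3potential} vanishes identically at $\lambda=\pm\imi$, so there $\xi_t$ reduces to $\bigl[\begin{smallmatrix}0&\lambda^{-1}\\0&0\end{smallmatrix}\bigr]\deriv z$, with single-valued frame, whence the monodromy $M$ satisfies $M(\imi)=M(-\imi)=\id$ for all $t$. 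By Remark~\ref{rem:closing} both closing conditions hold; hence wherever $\Phi_t$ admits an $\spaceH$-Iwasawa factorization, Theorems~\ref{thm:gwr} and~\ref{thm:unitary-frame} turn the \sym formula at $(\lambda_0,\lambda_1)=(\imi,-\imi)$ into a conformal CMC immersion with $H=\tfrac{\lambda_1+\lambda_0}{\lambda_1-\lambda_0}=0$, i.e.\ minimal, which by the closing conditions is single-valued on its domain of definition inside $\bbCP^1\setminus\{z_1(t),\dots,z_n(t)\}$.

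It remains to identify the set on which the Iwasawa factorization holds. At $t=0$ the frame is the round-sphere frame $\bigl[\begin{smallmatrix}1&\lambda^{-1}z\\0&1\end{smallmatrix}\bigr]$, and the explicit computation in the sphere example shows its $\spaceH$-Iwasawa factorization is valid precisely on the unit disk $\{\abs z<1\}$. Since Iwasawa (equivalently Birkhoff) factorizability is an open condition and Lemma~\ref{lem:potential} supplies uniform control of the frame in the Banach space $\mathcal B^\epsilon$, for small $t$ the factorization persists on $\{\abs z\le1-\delta\}$ minus a small disk around each pole $z_1(t),\dots,z_{n-1}(t)$ lying inside the unit disk. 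Near each such pole $\xi_t$ is, after a gauge transformation, a perturbation of a Delaunay potential, so by the theory of regular singular points its monodromy has Delaunay eigenvalues for some $q_k\in\bbR$, with $\abs{q_k}>1$ for $t$ small since the necksizes tend to $0$; moreover the factorization degenerates on circles accumulating at the pole, exactly as for a Delaunay cylinder. Take $D_k$ ($k\le n-1$) to be the disk bounded by the outermost such degeneracy circle, and $D_n:=\{\abs z\ge1-\delta\}$, a topological disk containing the remaining pole $z_n(t)$, the point $\infty$, and the circle along which the underlying round sphere crosses the ideal boundary. Then $\bbCP^1\setminus(D_1\cup\dots\cup D_n)$ is an $n$-holed sphere with disjoint holes, $\Phi_t$ has an $\spaceH$-Iwasawa factorization on it, and $f_t$ restricts to a conformal minimal immersion of it into $\spaceH$. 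The monodromy around $D_n$ has the same eigenvalues as the monodromy around the puncture $z_n(t)$, hence is again Delaunay with $\abs{q_n}>1$; so every hole carries Delaunay eigenvalues and $f_t$ is an open minimal $n$-noid in the sense of Definition~\ref{def:openn}.

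The main obstacle is the last step: proving that the Iwasawa factorization of $\Phi_t$ survives, uniformly for all small $t$, on a full $n$-holed sphere rather than on a mere disk. In Traizet's $\bbR^3$ construction this is vacuous because the relevant Iwasawa decomposition is global; here one must combine the uniform estimates of Lemma~\ref{lem:potential} with the explicit model behaviour near the two kinds of boundary --- the perturbed Delaunay ends, and the perturbed round sphere near $\abs z=1$ --- in order to pin down the degeneracy locus and verify that deleting the $n$ disks above suffices. The other ingredients --- the balancing of the first step, the automatic extrinsic closing, and the bookkeeping of the Delaunay eigenvalues --- are routine given Lemma~\ref{lem:potential} and the regular-singular-point analysis already invoked for the trinoid potential~\eqref{eq:trinoid}.
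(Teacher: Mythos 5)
Your overall route is the paper's own: feed concrete balanced data into Lemma~\ref{lem:potential}, note that the extrinsic closing at $\lambda_0=\imi$, $\lambda_1=-\imi$ is automatic because the coefficient $t(\lambda^2+1)\omega$ in~\eqref{eq:H3potential} vanishes there, and obtain the $\spaceH$-Iwasawa factorization for small $t$ by perturbing the explicitly factorized frame of the hyperbolic disk on the compact region $\{\abs{z}\le 1-\delta\}$ minus $\delta$-disks about $z_1(t),\dots,z_{n-1}(t)$. Your hyperboloid reformulation of the balancing condition~\eqref{eq:weight}, including the $n=2$ case via $p_2=1/\ol{p_1}$ (where the paper instead quotes Delaunay cylinders), is a clean and correct way of making precise what the paper's proof dismisses as easy to see.

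The one genuine flaw is the detour through ``degeneracy circles'': you assert that the Iwasawa factorization degenerates on circles accumulating at each pole ``exactly as for a Delaunay cylinder'' and take the outermost such circle as the boundary of $D_k$. Nothing in Lemma~\ref{lem:potential} or in the regular-singular-point analysis locates the degeneracy locus of a \emph{perturbed} Delaunay end, and the paper deliberately leaves the structure of the intersection with the ideal boundary as a conjecture. Fortunately this detour is also unnecessary: Definition~\ref{def:openn} only requires a conformal minimal immersion of \emph{some} $n$-holed sphere with Delaunay monodromy eigenvalues around each hole, so you may simply take $D_k$ to be the $\delta$-disk around $z_k(t)$ for $k\le n-1$ and $D_n=\{\abs{z}\ge 1-\delta\}$ (containing $z_n(t)$ and $\infty$). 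On the complement, the continuity argument you already wrote down --- openness of the big cell, uniform closeness of $\xi_t$ to $\xi_0$ and hence of $\Phi_t$ to the round-sphere frame on this compact set, with factorizability well defined despite the nontrivial topology because the monodromy is unitary --- gives the immersion via theorem~\ref{thm:gwr}, and the Delaunay eigenvalues around each hole follow from the perturbed-Delaunay form of the potential at the poles, exactly as you state. With this choice the ``main obstacle'' you flag in your final paragraph disappears; it is precisely how the paper concludes. Pinning down the actual degeneracy locus is only needed for the stronger, conjectural claim that these open $n$-noids extend to $n$-noids in the sense of Definition~\ref{def:nnoid}.
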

\begin{proof}
  For $n=2$ we have seen the existence of Delaunay cylinders. For
  $n\neq3$ it is easy to see the existence of $n$ pairwise distinct
  points $p_k\in\bbC^\ast$ with $0<\abs{b_l}<1$ for $l=1, \dots, n-1$ and
  $\abs{p_n}>1$ and $n$ positive real numbers $\tau_k$ satisfying the
  balancing formula~\eqref{eq:weight}.

  To construct an open minimal $n$-noid we consider for $T>t>0$ the
  potential provided by Lemma~\ref{lem:potential}, and the solution
  $\Phi$ of
  \begin{equation}
    d\Phi=\Phi\xi_{t};\quad \Phi(0)=\id.
  \end{equation}
  The Iwasawa decomposition (for the real involution corresponding to
  $\spaceH$) does exist on an open subset of the loop group
  $\Lambda\matSL_2(\bbC)$, containing the constant loop $\id$.
  As for $\delta>0$ small enough and $t$ small enough $\xi_{t}$ is
  arbitrarily close to $\xi_{0}$ on
  \begin{equation}
    \Sigma=\{z\in\bbC\mid \abs{z}<1-\delta; \abs{z-p_k}>\delta
    \text{ for } k=1, \dots, n-1\}
  \end{equation}
  the loop $\Phi(z)$ admits an Iwasawa decomposition on the $n$-holed
  sphere $\Sigma$, and theorem~\ref{thm:gwr} for the \sym points
  $\lambda_0=\imi$, $\lambda_1=-\imi$ provides a minimal surface
  \begin{equation}
    f\colon\Sigma\to\spaceH.
  \end{equation}
  By construction (and for $\delta$ small enough), the monodromies
  around the holes have Delaunay eigenvalues since at each of its
  poles the potential is gauge equivalent to a perturbation of a
  Delaunay potential.
\end{proof}
Figure \ref{fig:16noid-h3} presents an equilateral minimal 16-noid. This is a surfaces with dihedral symmetry. Our numerical experiments show that such surfaces stay embedded for an arbitrary number of ends.
\begin{figure}[b]
  \centering
  \includegraphics[width=0.75\textwidth]{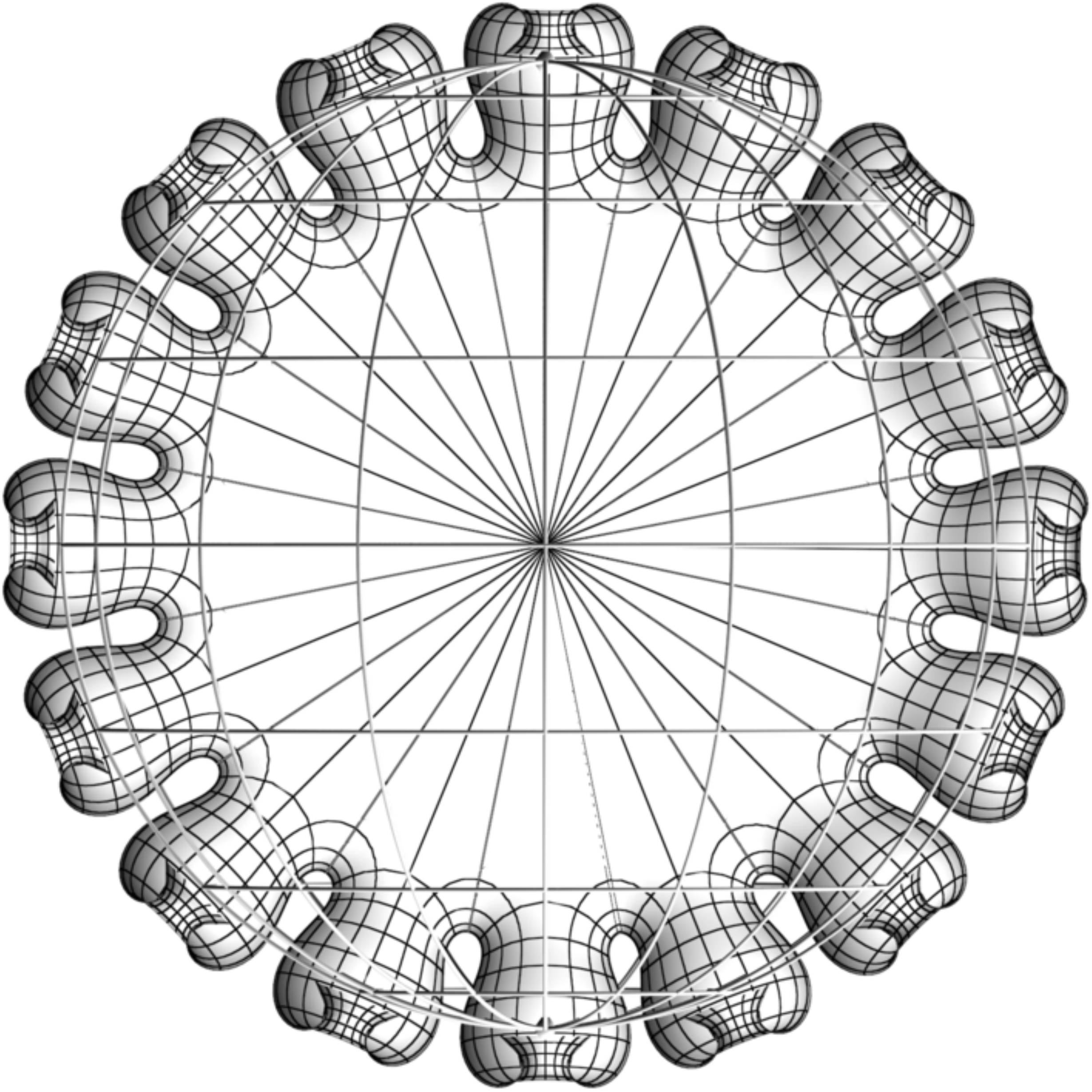}
  \caption{\small
    An equilateral minimal 16-noids in $\spaceH\cup\Stwo\cup\spaceH$
    with cyclic symmetry of order $16$.}
  \label{fig:16noid-h3}
\end{figure}
\begin{conjecture}
There exist embedded minimal $n$-noids in $H^3$ for any $n\in\mathbb N^{\geq2}$.
\end{conjecture}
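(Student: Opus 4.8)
The plan is to promote the open $n$-noids of Theorem~\ref{thm:main} to embedded $n$-noids in the sense of Definition~\ref{def:nnoid} by analyzing the small-necksize limit $t\to 0$ of the potentials~\eqref{eq:H3potential}, in the spirit of Traizet's embeddedness results for CMC $n$-noids in $\bbR^3$~\cite{Traizet_2017}. The starting point is that at $t=0$ the potential is upper triangular, so the \DPW frame is upper triangular and the induced surface is exactly the totally geodesic disk~\eqref{eq:H3sphere} (which extends to a round sphere in $\spaceS$). Hence, for small $t$, the surface $f_t$ produced by Lemma~\ref{lem:potential} and Theorem~\ref{thm:main} should decompose into a central region that is $C^1$-close to a fixed piece of this disk, together with $n$ thin \emph{arms}, each $C^1$-close on an annular neighborhood of $p_k$ to a thin minimal Delaunay end whose geodesic axis and ideal endpoints on $\Stwo_\infty$ are determined by the balancing datum $p_k$.

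The first step is to make this picture quantitative. Using the Banach-space setup of Lemma~\ref{lem:potential} one controls $\xi_t$, hence the \DPW frame $\Phi$, hence --- via the Iwasawa factorization, which is real-analytic on a neighborhood of the constant loop $\id$ --- the immersion $f_t$, uniformly in $C^1$ on compact subsets of the $n$-holed sphere $\Sigma$ away from the punctures and, separately, on each annular end. The second step is to choose the balanced configuration $(p_k,\tau_k)$ of Lemma~\ref{lem:potential} so that the ideal endpoints of the $n$ arms are pairwise distinct; for the dihedrally symmetric families visible in Figures~\ref{fig:noid-h3} and~\ref{fig:16noid-h3} this is automatic, and it is for those families that I would first try to settle the conjecture. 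Embeddedness of $f_t$ for small $t$ then follows by combining three observations: the central region is an embedded perturbation of a fixed embedded disk; each arm, being a $C^1$-small perturbation of a thin minimal Delaunay end of unduloid type --- the positivity of the weights $\tau_k$ used in the proof of Theorem~\ref{thm:main} should guarantee this --- is embedded and, since the ideal endpoints are pairwise separated, disjoint from the other arms once $t$ is small; and the arms are attached to pairwise disjoint disks of the central region. A transversality and covering-space argument along the lines of~\cite[section~5]{Heller_Heller_2018} should then upgrade this, under the mild nondegeneracy hypothesis on the Iwasawa decomposition mentioned after Remark~\ref{rem:iwasawa}, to the statement that $f_t$ meets $\Stwo_\infty$ transversally, so that one obtains an embedded $n$-noid in the strict sense of Definition~\ref{def:nnoid} rather than merely an open one.

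The main obstacle is twofold. First, one must control the surface along the real-analytic locus where the generalized Iwasawa factorization breaks down (Remark~\ref{rem:iwasawa}): this is exactly the circle of questions the authors defer, and it requires showing that each arm genuinely continues through $\Stwo_\infty$ into the second copy of $\spaceH$ as a Delaunay-asymptotic embedded end rather than developing a complicated intersection with the ideal boundary. Second, there is the issue of uniformity in $n$: the bound $T$ on the necksize furnished by Lemma~\ref{lem:potential} is not a priori independent of $n$, so one must rule out degeneration of the balanced configurations as $n\to\infty$, i.e.\ establish a uniform lower bound on the separation of the ideal endpoints. For the dihedral configurations this can plausibly be obtained by explicit estimates on a fundamental domain --- which is why the conjecture looks most accessible for those families first --- and only afterwards, via a compactness or no-neck-collapse argument on the space of balanced configurations, in full generality.
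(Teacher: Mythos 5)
There is a genuine gap here, and the first thing to say is that the paper itself does not prove this statement: it is stated as a conjecture, supported only by the numerical experiments behind figures~\ref{fig:noid-h3} and~\ref{fig:16noid-h3}, and the authors explicitly defer the theoretical analysis of the intersection with the ideal boundary to future work. Your proposal is a sensible program in the spirit of Traizet~\cite{Traizet_2017} and Raujouan~\cite{Raujouan_2018}, but as written it is a plan with the hard steps left open rather than a proof. Concretely: (a) Theorem~\ref{thm:main} only produces \emph{open} $n$-noids on an $n$-holed sphere $\Sigma$ away from the punctures; promoting these to $n$-noids in the sense of Definition~\ref{def:nnoid} requires controlling the surface on the real-analytic locus where the $\spaceH$-Iwasawa factorization leaves the big cell (Remark~\ref{rem:iwasawa}) and showing that each end continues through $\Stwo_\infty$ as a Delaunay-type end. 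Your argument only gives $C^1$-closeness to the limit disk and to Delaunay models on compact subsets of $\Sigma$ and on fixed annuli, which says nothing about the noncompact end regions or about the breakdown curves; this is exactly the circle of questions the authors leave open, and invoking ``a transversality and covering-space argument along the lines of~\cite[section~5]{Heller_Heller_2018} should then upgrade this'' is an appeal to a result that does not yet exist in this setting. (b) Embeddedness of the arms is not a transfer of the $\bbR^3$ unduloid picture: a minimal Delaunay end in $\spaceH$ oscillates between the two copies of $\spaceH$, crossing the ideal boundary infinitely often (figure~\ref{fig:delaunay-h3}), so even the embeddedness of the model ends, and the sense in which an ``embedded $n$-noid'' is meant (image in $\spaceH$ versus in $\spaceH\cup\Stwo\cup\spaceH$), needs an argument that your sketch does not supply.

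Two smaller points. The uniformity-in-$n$ obstacle you raise is spurious for the statement as posed: the conjecture asks, for each fixed $n\geq 2$, for the existence of some embedded $n$-noid, so one may choose $t$ (and the balanced configuration of Lemma~\ref{lem:potential}) depending on $n$; no lower bound on $T$ uniform in $n$ is needed. On the other hand, the positivity of the weights $\tau_k$ does not come for free from the proof of Theorem~\ref{thm:main} in the way you suggest; the balancing condition~\eqref{eq:weight} constrains the configuration $(p_k,\tau_k)$, and for the dihedral configurations you target one must still check that a balanced choice with the signs you want exists and that the resulting necks are of ``unduloidal'' (embedded) rather than nodoidal type --- in the hyperbolic setting this dichotomy itself has to be formulated and proved, not borrowed from $\bbR^3$.
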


\section{Minimal $n$-noids in anti-de~Sitter space $\spaceADS$}
\label{sec:ads3}

\subsection{The lightcone model for $\spaceADS$}
We set
\begin{equation}\spaceADS= \matSU_{11}\end{equation}
equipped with the natural biinvariant Lorentzian metric associated to
the quadratic form given by the trace. As for hyperbolic space,
anti-de~Sitter space can be defined as the complement of the boundary
at infinity
\begin{equation}
  S_\infty=\{\bbR (x_0,\,x_1,\,x_2,\,x_3,\,0)\mid x_0^2+x_1^2-x_2^2-x_3^2=0\}
\end{equation}
of the lightcone
\begin{equation}
  \mathcal L=\{\bbR (x_0,\,x_1,\,x_2,\,x_3,\,x_4)\mid x_0^2+x_1^2-x_2^2-x_3^2-x_4^2=0\}
\end{equation}
via
\begin{equation}
  \bbR(x_0,\,x_1,\,x_2,\,x_3,\,x_4)\in\mathcal L\setminus S_\infty\mapsto
  \frac{1}{x_4}
  \begin{bmatrix}x_0+i x_1& x_2-i x_3\\ x_2+ ix_3& x_0-i x_1\end{bmatrix}
    \in \matSU_{11}.
\end{equation}
For visualization, we make use of the {\em stereographic projection}
\begin{equation}
  [x_0,\,x_1,\,x_2,\,x_3,\,x_4]\in\mathcal L^0\mapsto
  \frac{1}{x_0+x_4} (x_1,\,x_2 ,\,x_3)\in\bbR^{1,2}
\end{equation}
defined on
\begin{equation}
  \mathcal L^0=\mathcal L\setminus
  \{ \bbR(x_0,\,x_1,\,x_2,\,x_3,\,-x_0)\mid x_1^2-x_2^2-x_3^2=0\}.
\end{equation}
This map is a conformal diffeomorphism onto an open subset of
Minkowski space $\bbR^{1,2}$.

\subsection{Basic examples}
We first describe some simple surfaces in terms of the \DPW approach.
\subsubsection{The sphere in $\spaceADS$}
As in the case of $\spaceS$ and $\spaceH$
the easiest example of a \DPW potential is given by
\begin{equation}
  \xi(\lambda)=\begin{bmatrix}0 & \lambda^{-1} \\
  0 & 0\end{bmatrix}dz
\end{equation}
on the complex plane, with \DPW frame
\begin{equation}
  \Phi(\lambda)=\begin{bmatrix}1 & \lambda^{-1} z \\
  0 & 1\end{bmatrix}.
\end{equation}
The factorization for $\spaceADS$ is given by
\begin{equation}
  \Phi(\lambda)=F(\lambda)B(\lambda)=\frac{1}{\sqrt{1-z\bar z}}
  \begin{bmatrix}1 & \lambda^{-1} z \\
    \lambda \bar z & 1\end{bmatrix}\frac{1}{\sqrt{1-z\bar z}}\begin{bmatrix}1 & 0 \\
    -\lambda \bar z & 1-z\bar z\end{bmatrix}
\end{equation}
and taking \sym points $\lambda_0=1$ and $\lambda_{1}=-1$ we obtain
\begin{equation}
  \label{eq:ads3sphere}
  f=F(1)F(-1)^{-1}=\frac{1}{1-z\bar z}
  \begin{bmatrix} 1+ z\bar z& 2z \\ 2\bar z &1+ z\bar z\end{bmatrix}.
\end{equation}
Restricted to the unit disk $D\subset\bbC$ this is just a
conformally parametrized totally geodesic hyperbolic disk inside
$\matSU_{11}=\spaceADS$ with induced metric
\begin{equation}
  \frac{1}{1-z\bar z}\deriv z\otimes \deriv\bar z.
\end{equation}
\begin{remark}
The surface $f$ is not well-defined on the whole plane $\bbC$ or
projective line, but crosses the ideal boundary at infinity $S_\infty$
along the unit circle $\Sone\subset\bbC$. By~\eqref{eq:ads3sphere} $f$
can be continued on the complement of the closed unit disk as a map
into $\matSU_{11}$. Again, this phenomena turns out to be typical in
the examples below
(figures~\ref{fig:delaunay-ads3} and~\ref{fig:trinoid}).
\end{remark}

\subsubsection{Minimal planes}

Besides the trivial example of the hyperbolic disk, there is an
interesting class of minimal surfaces with trivial topology in
$\spaceADS$ which have been investigated in detail in~\cite{Alday_Maldacena_2009},
and are parametrized by null polygonal
boundaries. In the special case of regular polygons the Hopf
differential is given by a homogeneous polynomial on the complex
plane. The metric is rotationally invariant and given by a global
solution of the Painlev\'e III equation.

Analogous to the \DPW description \cite{Bobenko_Its_1995} of \emph{Smyth surfaces} in $\bbR^3$ with rotationally
symmetric metric~\cite{Smyth_1993} these minimal planes can be
constructed via the \DPW potential of the form
\begin{equation}
  \label{eq:smyth}
  \begin{bmatrix}
    0 & \lambda^{-1} \\ cz^n & 0\end{bmatrix}\deriv z,\quad
    n\in\bbN,\quad c\in\bbR
\end{equation}
for appropriate $c \in\bbR\setminus\{0\}$. 
 The surface (with
initial condition at $z=0$ given by $\id$) has discrete ambient
cyclic symmetry of order $n+2$.  The Iwasawa decomposition is global
only for one special value $\hat c$ (depending on $n$); see for
example the detailed investigations in~\cite{Guest_Its_Lin_2018}. In
figure~\ref{fig:smyth-ads3} a surface for $n=1$ with $c$ numerically
close to $\hat c$ is shown. A systematic
investigation of this boundary behavior of solutions of the underlying
Gauss equation, i.e., the generalized sinh-Gordon equation, on
punctured Riemann surfaces with Hopf differentials having higher order
poles is carried out in~\cite{Gupta_2018},
under the name {\em crowned Riemann surfaces}.

\subsubsection{Delaunay cylinders in $\spaceADS$}

\begin{figure}[b]
  \centering
  \begin{subfigure}[t]{0.49\textwidth}
    \centering
    \includegraphics[width=\textwidth]{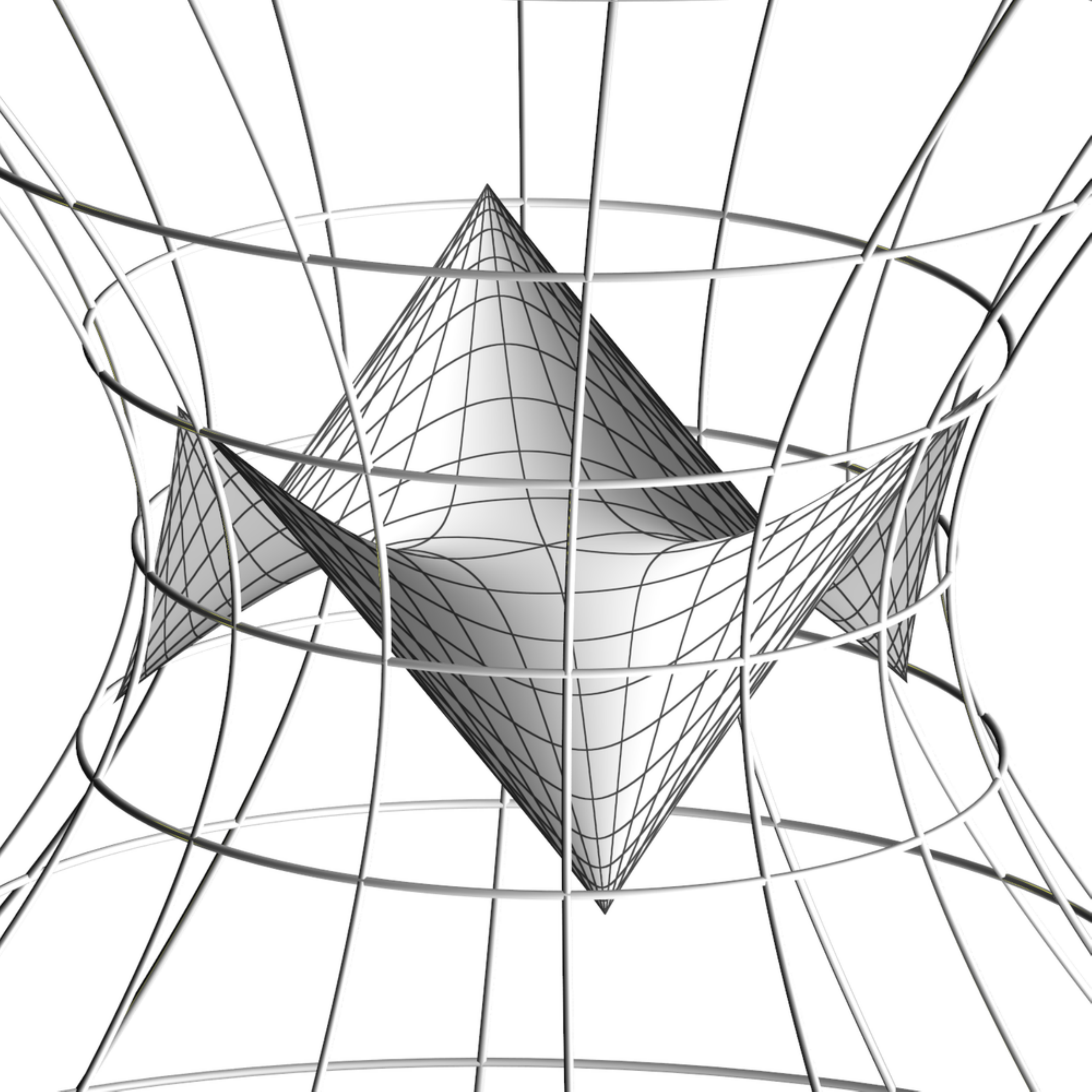}
    \caption{
      \small
      Minimal Smyth surface in $\spaceADS$ with cyclic symmetry of order three.}
    \label{fig:smyth-ads3}
  \end{subfigure}
  \begin{subfigure}[t]{0.49\textwidth}
    \centering
    \includegraphics[width=\textwidth]{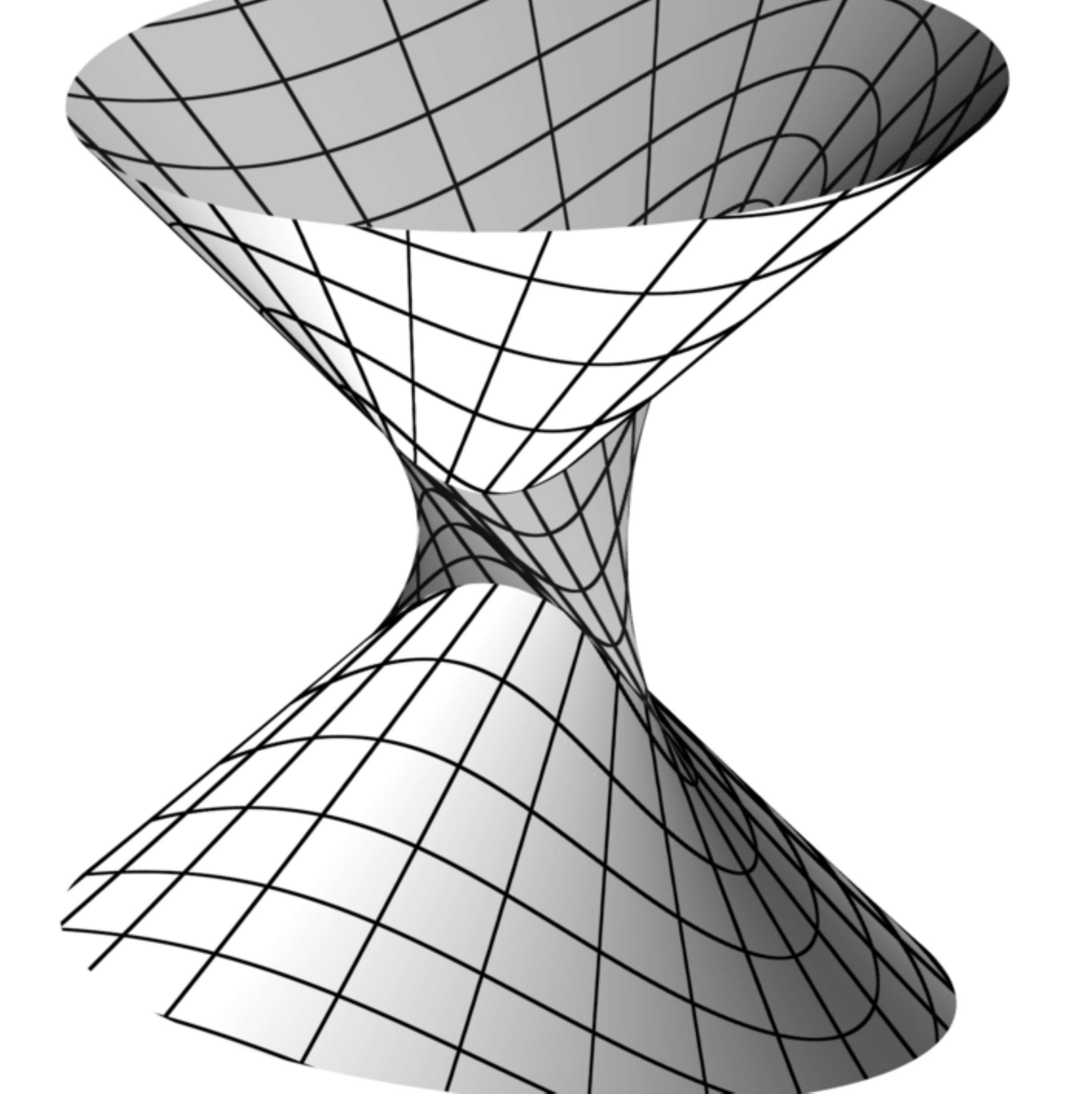}
    \caption{\small
      Swallowtail singularity on a minimal surface in $\spaceADS$.
    }
    \label{fig:swallowtail-ads3}
  \end{subfigure}
  \caption{}
  \label{fig:smyth}
\end{figure}

Minimal Delaunay planes in $\spaceADS$ have been described in
\cite{Bakas_Pastras_2016} in terms of their elliptic spectral data. By
imposing the extrinsic closing condition along the equivariant
direction they form a real 1-dimensional family of geometrically
distinct surfaces.  The conformal factor with respect to the
coordinate $\deriv w$ on $\bbC/(2\pi \imi\bbZ)$ is a solution of the
sinh-Gordon equation with opposite sign as for CMC surfaces in
$\spaceS$~\eqref{eq:gauss-equation}, and the Hopf differential
is a constant multiple of $(\deriv w)^2$. The conformal factor
can be given explicitly in terms of the Weierstrass $\wp$-function
on a rectangular elliptic curve; see~\cite{Bakas_Pastras_2016}. The surface is
rotationally symmetric, and the conformal factor depends only on one
(real) variable and is periodic, but blows up once each (intrinsic)
period where the surface intersects the ideal boundary at infinity.
Moreover, the conformal factor $v^2=e^{2u}$ vanishes to second order
once in each (intrinsic) period.  Geometrically this means that one
tangent direction is mapped to a lightlike vector, while the
orthogonal tangent direction (with respect to the Riemann surface
structure) spans the kernel of the differential of the CMC surface.
In the equivariant case, at the singular points, the kernel of the
differential of the CMC surface coincides with the kernel of the
differential of the conformal factor $v^2$, giving us a cone point
(figure~\ref{fig:delaunay-h3}).  This phenomena does not happen
generally, where a swallowtail like
singularity is expected (figure~\ref{fig:swallowtail-ads3}).

In the following, we consider the Delaunay cylinders parametrized on
the 2-punctured sphere $\bbC^\ast$, where the Hopf differential is
a constant multiple of $(\deriv z)^2/z^2$. These can be
constructed via theorem~\ref{thm:gwr} analogously to Delaunay
cylinders in $\spaceH$.

A family of \DPW potentials inducing Delaunay surfaces on $\bbC^\ast$
with initial condition $\Phi(1)=\id$ is
\begin{equation}
  \label{eq:delaunay}
  A\frac{\imi\deriv z}{z},\quad
  A = B + B^\ast,\quad
  B = \begin{bmatrix}\tfrac{\imi c}{2} & a\lambda^{-1}\\
    b & -\tfrac{\imi c}{2}\end{bmatrix},\quad
  a\in\bbR^\ast,\quad b,\,c\in\bbR.
\end{equation}
Imposing the extrinsic closing condition with $\lambda_0=\imi$,
a $1$-parameter family of potentials is given by
\begin{equation}
  B = \frac{1}{2\sqrt{q^2+1}}
  \begin{bmatrix}\imi \sqrt{2(q^2+1)} & \lambda^{-1} + q\\
    \lambda + q & -\imi \sqrt{2(q^2+1)}\end{bmatrix}\\
\end{equation}
for $q\in\bbR$.
The meromorphic frame $\Phi$ is based at $z=1$ with $\Phi(1) = \id$.
As in the previous section we obtain:
\begin{theorem}
  The \DPW construction
  applied to the above data
  gives Delaunay cylinders in $\spaceADS$.
\end{theorem}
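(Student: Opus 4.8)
The plan is to mirror the argument used for the corresponding $\spaceH$ statement, since the $\spaceADS$ potential in~\eqref{eq:delaunay} has exactly the same structural features: it is $\imi A\,\deriv z/z$ with $A$ a $z$-independent loop, so the \DPW frame based at $z=1$ is again the matrix exponential $\Phi = \exp(\imi A\log z)$. First I would verify that $A$ is a unitary connection coefficient for the $\spaceADS$ real form~\eqref{eq:real-form}: by construction $A = B + B^\ast$, so $A^\ast = A$ automatically, and one checks that $A$ has the correct singularity structure (simple pole at $\lambda=0$, $\det A_{-1}=0$ from the off-diagonal nilpotent leading term, and $\DOT{A_{-1}}{A_{-1}^\ast}\neq0$ since $a\in\bbR^\ast$). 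Theorem~\ref{thm:gwr} then tells us that wherever $\Phi$ Iwasawa-factorizes, the unitary factor is a unitary frame and the \sym formula at $\lambda_0=\imi$, $\lambda_1=-\imi$ produces a conformal CMC (here minimal) immersion.

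Next I would check the closing conditions of remark~\ref{rem:closing} on the monodromy $M = \exp(2\pi \imi A)$ around the puncture $z=0$. Intrinsic closing $M^\ast = M$ is immediate from $A^\ast = A$. For extrinsic closing one computes the eigenvalues of $A$ at the \sym points: the explicit choice of $B$ with the prefactor $1/(2\sqrt{q^2+1})$ and the diagonal entry $\imi\sqrt{2(q^2+1)}$ is precisely engineered so that $A(\lambda_0)$ and $A(\lambda_1)$ have eigenvalues $\pm\tfrac12$ (equivalently, $\imi A$ has eigenvalues $\pm\imi/2$), whence $M(\lambda_0) = M(\lambda_1) = -\id$. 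Verifying this eigenvalue normalization — computing $\det A(\imi)$ and $\tr A(\imi)$ from the given matrix and confirming the characteristic polynomial is $\mu^2 + \tfrac14$ — is the one genuine calculation; because of the sign differences in the $\spaceADS$ involution ($e_0$-conjugation, and $\lambda\mapsto 1/\bar\lambda$ rather than $-1/\bar\lambda$) this is where an error could creep in, so I would treat it carefully, but it is still a finite linear-algebra check rather than a conceptual obstacle.

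Finally I would establish the surface of revolution (equivariance) structure exactly as in the $\spaceH$ case: changing coordinates $x + \imi y = \imi\log z$ gives $\Phi = \exp((x+\imi y)A) = \exp(xA)\exp(\imi yA)$, and since $\exp(xA)$ is $\spaceADS$-unitary (being the exponential of a unitary Lie-algebra element along the real direction), the Iwasawa unitary factor splits as $F(x,y) = \exp(xA)\,G(y)$ with $G$ the unitary factor of $\exp(\imi yA)$. Hence $F$ is equivariant under the one-parameter group $x\mapsto\exp(xA)\cdot$, and the \sym formula gives an equivariant immersion whose profile curve $X(y) = G_{\lambda_0}G_{\lambda_1}^{-1}$ is acted on by $X\mapsto \exp(xA(\lambda_0))\,X\,\exp(-xA(\lambda_1))$; the eigenvalue normalization $\pm\imi/2$ of $\imi A$ at the \sym points makes this action $2\pi$-periodic in $x$, so the orbits are closed and the surface is a genuine (rotationally symmetric, intrinsically periodic) Delaunay cylinder on $\bbC^\ast$. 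I expect no new obstacle here beyond bookkeeping the signs induced by the $\matSU_{11}$ real form; the substantive content is entirely parallel to the hyperbolic case, which is why the excerpt phrases it as "analogously to Delaunay cylinders in $\spaceH$."
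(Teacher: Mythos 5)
Your proposal is correct and follows essentially the same route as the paper: the paper gives no separate argument for the $\spaceADS$ case (``As in the previous section we obtain\dots''), and your proof simply transplants the $\spaceH$ Delaunay proof --- intrinsic closing from $A=A^\ast$, extrinsic closing from the eigenvalue normalization of $A$ at the \sym points $\lambda_0=\imi$, $\lambda_1=-\imi$, and equivariance via the splitting $\Phi=\exp(xA)\exp(\imi yA)$ --- which is exactly what is intended. The only caveat is a harmless normalization discrepancy (you write $M=\exp(2\pi\imi A)$ with eigenvalues $\pm\tfrac12$ of $A$, while the paper's convention is $M=\exp(2\pi A)$ with eigenvalues $\pm\imi/2$); either bookkeeping yields $M(\lambda_0)=M(\lambda_1)\in\{\pm\id\}$, so the closing argument goes through.
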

An example is shown in figure~\ref{fig:delaunay-ads3}.

\subsection{$n$-noids in $\spaceADS$}

We may define $n$-noids and open $n$-noids in $\spaceADS$ in an
analogous way as for surfaces in $\spaceH$. We can also adopt
Traizet's construction~\cite{Traizet_2017} as done for the case of
minimal surfaces in $\spaceH$ above to prove the existence
of open $n$-noids in $\spaceADS$. It would be nice to prove that these
examples are actually $n$-noids in the strong sense; a proof of this
conjecture might build up on the techniques developed in
\cite{Raujouan_2018}.

\begin{theorem}
For every $n>1$ there exist open minimal $n$-noids in $\spaceADS$.
\end{theorem}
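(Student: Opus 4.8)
The plan is to run the argument already carried out for $\spaceH$ in Theorem~\ref{thm:main}, making only the changes forced by the different real form. First I would record the $\spaceADS$-analogue of the potential~\eqref{eq:H3potential}: take
\begin{equation}
  \xi_t(z) = \begin{bmatrix} 0 & \lambda^{-1}\deriv z \\ t(\lambda^2+1)\,\omega(z) & 0\end{bmatrix},
  \quad
  \omega = \sum_{k=1}^n\Bigl(\frac{a_k}{(z-z_k)^2}+\frac{b_k}{z-z_k}\Bigr)\deriv z,
\end{equation}
with \sym points $\lambda_0=\imi$, $\lambda_1=-\imi$, now interpreted relative to the $\spaceADS$ involution $X^\ast(\lambda) = e_0\,\transpose{\ol{X(1/\ol\lambda)}}^{-1}e_0^{-1}$ of~\eqref{eq:real-form}. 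As in section~\ref{sec:H3}, for small $t>0$ the coefficients $a_k,b_k,z_k$ are allowed to be holomorphic functions of $\lambda$ on a disk of radius $1+\epsilon$, and the potential descends to an $n$-holed sphere once the $z_k$ are close enough to constants.

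Next I would prove the $\spaceADS$-version of Lemma~\ref{lem:potential}: that there are real weights $\tau_k\neq 0$ and points $p_k$ (lying off the relevant fixed-point locus of the $\spaceADS$ $\ast$-operator, i.e.\ off the unit circle, since $1/\ol\lambda$ now appears rather than $-1/\ol\lambda$) satisfying a balancing identity, for which the monodromy of $\Phi$ at $z=0$ is $\spaceADS$-unitary for small $t$. This is the implicit function theorem argument of~\cite[Propositions 3 and 4]{Traizet_2017} transplanted to the new involution, exactly as invoked in the proof of Lemma~\ref{lem:potential}; the only bookkeeping change is replacing $f^\ast(\lambda)=\ol{f(-1/\ol\lambda)}$ by $f^\ast(\lambda)=\ol{f(1/\ol\lambda)}$ throughout, which alters where the balancing condition is evaluated but not the structure of the fixed-point equations or the spectral-reality argument showing the $\tau_k$ are real and $\lambda$-independent. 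Once such a potential is in hand, I would solve $\deriv\Phi = \Phi\,\xi_t$ with $\Phi(0)=\id$ and observe, as in Theorem~\ref{thm:main}, that since the $\spaceADS$-Iwasawa factorization is defined on an open neighborhood of $\id$ in $\Lambda\matSL_2(\bbC)$ and $\xi_t\to\xi_0$ uniformly on a fixed $n$-holed subdomain $\Sigma$ as $t\to 0$, the frame $\Phi(z)$ admits an Iwasawa factorization on $\Sigma$ for $t$ small. Theorem~\ref{thm:gwr} with $\lambda_0=\imi$, $\lambda_1=-\imi$ then produces a conformal minimal immersion $f\colon\Sigma\to\spaceADS$; and because near each pole of $\xi_t$ the potential is gauge-equivalent to a perturbation of a Delaunay potential, regular-singular-point theory forces Delaunay eigenvalues around each hole, which is the definition of an open $n$-noid in $\spaceADS$. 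For $n\neq 3$ one exhibits the required $p_k,\tau_k$ explicitly as in the proof of Theorem~\ref{thm:main}; for $n=2$ the Delaunay cylinders constructed earlier in this section already provide the example, and $n=3$ is handled by the explicit trinoid potential together with the analogue of Theorem~\ref{thm:trinoidH}.

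The main obstacle is the transplantation of Traizet's monodromy argument to the $\spaceADS$ real form: one must check that the linearized monodromy map is still an isomorphism between the appropriate real Banach spaces when the reflection $\lambda\mapsto -1/\ol\lambda$ is replaced by $\lambda\mapsto 1/\ol\lambda$, i.e.\ when the relevant unitary group is $\matSU_{11}$ rather than the $\spaceH$ real form. The indefinite signature means the fixed-point locus of the involution on the unit circle plays a slightly different role, and one has to exclude $p_k$ on $\Sone$ and verify that the balancing formula~\eqref{eq:weight} has the right solvability properties in this signature; but since Traizet's method is formal in the choice of $\ast$-compatible reality condition (as the excerpt's proof of Lemma~\ref{lem:potential} already asserts for $\spaceH$), this is expected to go through with only cosmetic changes. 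The Iwasawa and regular-singular-point steps are identical to the $\spaceH$ case and present no new difficulty.
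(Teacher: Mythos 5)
Your proposal follows essentially the same route as the paper, which itself disposes of this theorem in one line by declaring the argument analogous to Theorem~\ref{thm:main} with the $\ast$-operator adapted to the $\spaceADS$ real form; your transplantation of Lemma~\ref{lem:potential}, the Iwasawa step near $\xi_0$, and the regular-singular-point argument for Delaunay eigenvalues is exactly what that one line encodes. One small slip worth noting: the excluded locus $\abs{p_k}\neq 1$ concerns the points $z_k(0)=p_k$ in the $z$-plane (where the limiting sphere meets the ideal boundary and the balancing denominators $1-\abs{p_k}^2$ would degenerate), not the fixed-point set of the $\lambda$-involution, though your conclusion is the right one.
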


The proof is analogous to the proof of theorem~\ref{thm:main},
taking the appropriate $\ast$-operator on
$\matSL_2(\bbC)$ and on holomorphic functions. Rather than repeating
the details of the arguments we give in the next section
a general proof of existence
of 3-noid potentials for surfaces in $\spaceS$, $\spaceH$, $\spaceADS$ and
$\spaceDS$.

\section{$3$-noids in symmetric spaces}
\label{sec:g3noid}

In this last section we extend slightly the class of surfaces and
consider CMC surfaces in the symmetric spaces $\spaceS$, $\spaceH$,
$\spaceADS$ and $\spaceDS$. Recall from section~\ref{sec:CMCDPW} that these
can also be described by the \DPW approach.  We present a general
proof of existence for \DPW potentials on a 3-punctured sphere,
denoted as trinoids in the following.

The main motivation for giving an explicit proof of existence for
trinoids is that the so constructed surfaces can be numerically
computed and visualized directly
(figures~\ref{fig:noid-h3},~\ref{fig:noid-h3-iso},~\ref{fig:trinoid-h3-halfspace} and~\ref{fig:trinoid}).
In particular, the
proof might help the reader to perform computer experiments via \DPW
method. Also, in the case of a 3-punctured sphere, the
range of end weights $q$ (determined by the quadratic residues at the
ends) is more tractable here than in the implicit function
theorem proof of theorem~\ref{thm:main}.

The construction of trinoids  is particularly simple
because the conjugacy class of the monodromy group in this case is
determined by the individual conjugacy classes of the
generators. In the case of the involution $\lambda\mapsto  1/\ol{\lambda}$
the closing conditions can be read off from the
monodromy eigenvalues, which by the theory of regular singular points,
can be read off from the potential. 

Each pole of our potential will be a Fuchsian singularity
which is \DPW gauge equivalent to a
perturbation of a Delaunay potential.
By the theory of regular
singular points, each monodromy around a puncture has Delaunay
eigenvalues. This potential constructs CMC trinoids if the closing
conditions of remark~\ref{rem:closing} are satisfied.  The following
theorem constructs trinoids in
$\spaceADS$, $\spaceH$ with $\abs{H}<1$, and $\spaceDS$ with $\abs{H}<1$,
in analogy to the construction of trinoids in $\spaceS$
in~\cite{Schmitt_Kilian_Kobayashi_Rossman_2007}.

To show that the trinoid closes it is necessary to compute a
unitarizer $X$ of the monodromy.  Unlike the case of $\spaceS$, for
the other spaceforms the unitarizer could fail to have an Iwasawa
factorization.  Then the frame with unitary monodromy $X\Phi$ likewise
fails to have an Iwasawa factorization, and hence does not construct a
trinoid.  Hence it is necessary to find a unitarizer which has an
Iwasawa factorization, or equivalently, a unitarizer in
$\Lambda_+\matSL_2(\bbC)$.

To solve the technical problem of the existence of a monodromy
unitarizer in $\Lambda_+\matSL_2(\bbC)$ we make the simplifying
restriction to \emph{isosceles} trinoids, for which two of the three
end parameters are equal. As seen in theorem~\ref{thm:trinoid}, under this
restriction, a diagonal monodromy unitarizer can be computed
explicitly via a scalar Birkhoff factorization.

\subsection{Real forms}

\begin{figure}[b]
  \centering
  \begin{subfigure}[t]{0.49\textwidth}
    \includegraphics[width=\textwidth]{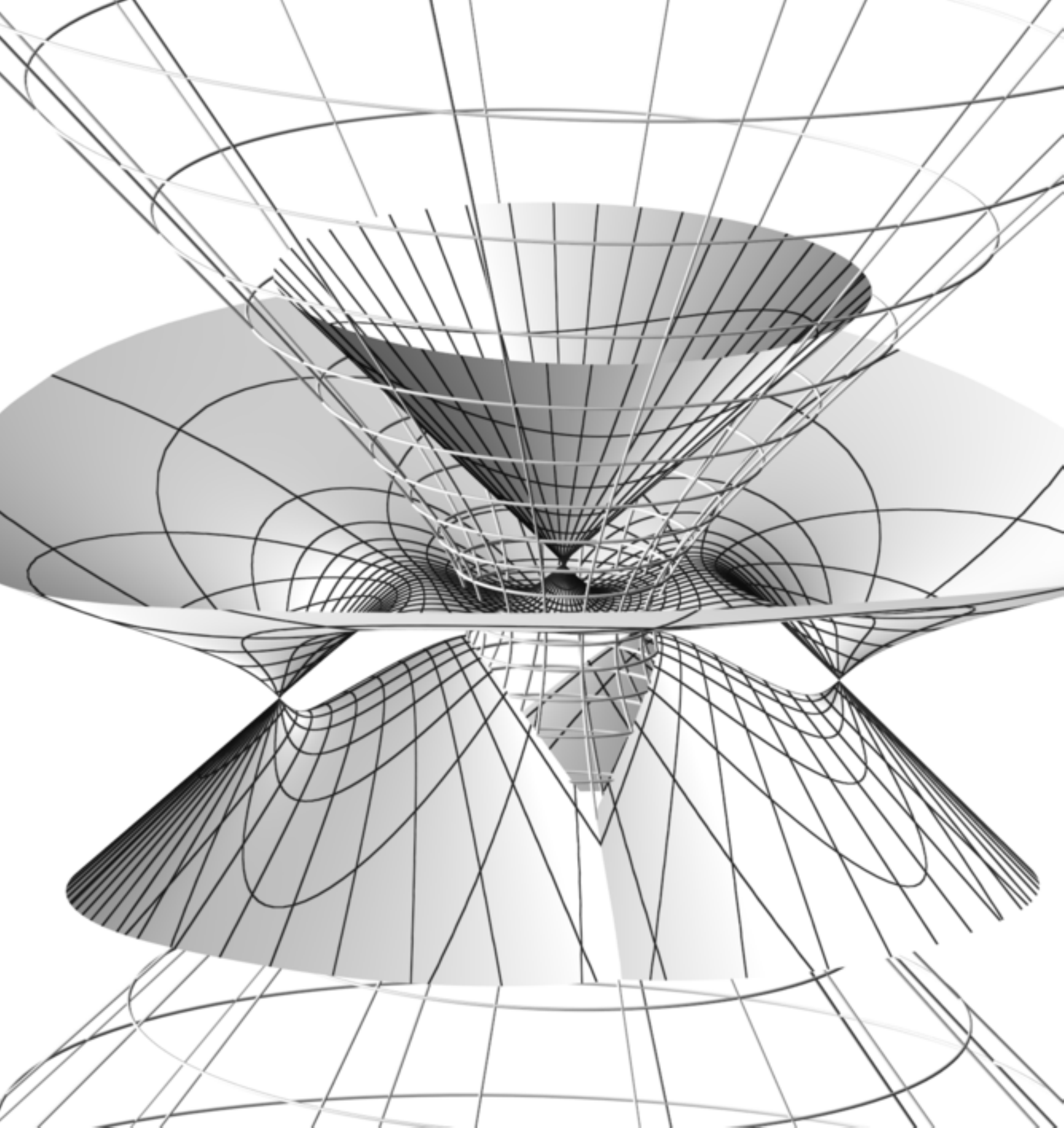}
    \caption{\small
      Minimal trinoid in $\spaceADS$
    }
    \label{fig:trinoid-ads3}
  \end{subfigure}
  \begin{subfigure}[t]{0.49\textwidth}
    \includegraphics[width=\textwidth]{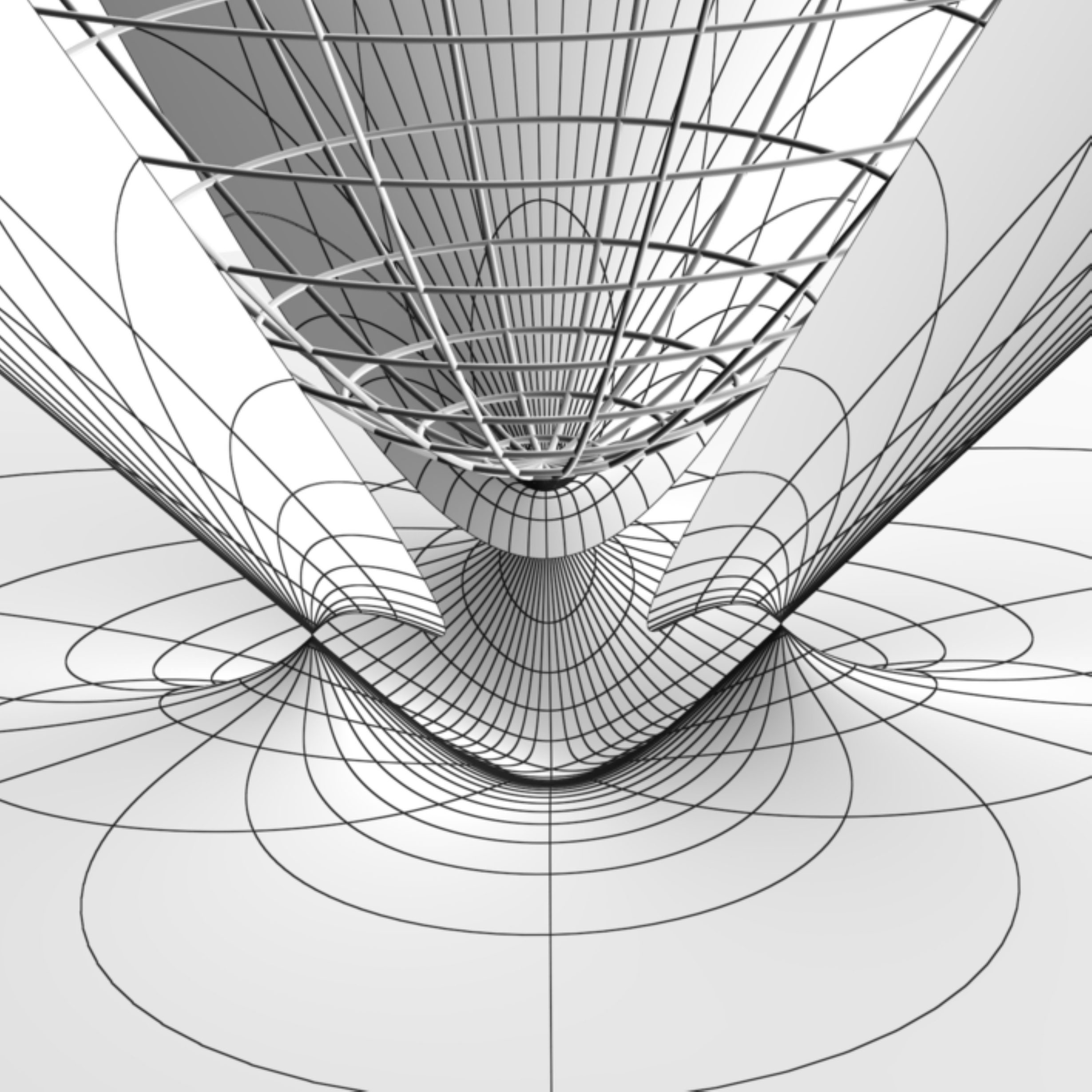}
    \caption{\small
      Minimal trinoid in $\spaceDS$
    }
    \label{fig:trinoid-ds3}
  \end{subfigure}
  \caption{\small
    Minimal trinoids in $\spaceADS$ and $\spaceDS$:
    three Delaunay half-cylinders glued
    to a minimal two-sphere (horizontal plane).
    A swallowtail singularity (figure~\ref{fig:swallowtail-ads3})
    appears on each Delaunay end.
  }
  \label{fig:trinoid}
\end{figure}

The four involutions~\eqref{eq:real-form}
on loops $X:\Sone\to\matSL_2\bbC$
can be indexed by $\delta\in\{\pm 1\}$ and $\epsilon\in\{\pm 1\}$:
\begin{equation}
  \label{eq:unitary}
  X^\ast(\lambda) = {\transpose{\ol{ \eta X(\delta/\ol{\lambda}) \eta^{-1} }}}^{-1},\quad
  \eta = \begin{cases}
    \id &
    \text{if $\epsilon=1$}\\
    \diag(\imi,\,-\imi) &
    \text{if $\epsilon=-1$}.
  \end{cases}
\end{equation}
The subgroup of loops $X$ satisfying $X^\ast = X$
is the real form for the symmetric spaces as tabulated:
\begin{equation}
  \label{eq:trinoid-table}
  \begin{array}{c|c|c}
    &\epsilon=1 & \epsilon=-1\\
    \hline
    \delta=1 & \spaceS & \spaceADS\\
    \delta=-1 & \spaceH\ (\abs{H}<1) & \spaceDS\ (\abs{H}<1)
  \end{array}
\end{equation}
A loop $M:\Sone\to\matSL_2\bbC$ is \emph{unitary} if $M^\ast = M$.
A monodromy group is \emph{unitarizable}
if there exists a map $X:\disk_+\to\matSL_2\bbC$
such that for each $M$ in the group $XMX^{-1}$ extends holomorphically to $\Sone$ and is unitary.

For scalar loops define
\begin{equation}
  \label{eq:scalar-star}
  f^\ast(\lambda) = \ol{ f(\delta/\ol{\lambda}) }.
\end{equation}

\subsection{Trinoids potentials}

A potential for trinoids is
\begin{equation}
  \label{eq:trinoid-potential}
o  \begin{bmatrix}0 & \lambda^{-1}\\ f(\lambda) Q & 0\end{bmatrix}\deriv z,
\end{equation}
where $Q\,\deriv z^2$ is a holomorphic quadratic differential with three
double poles and real quadratic residues,
$f$ satisfying $(f/\lambda)^\ast = f/\lambda$ is
\begin{subequations}
  \label{eq:trinoid-f}
  \begin{align}
  \label{eq:trinoid-f1}
    \spaceS\text{ and }\spaceADS&:\quad
    f = (\lambda - \lambda_0)(\lambda - \lambda_0^{-1}),\quad
    \lambda_0\in\Sone\setminus\{\pm 1\}\\
  \label{eq:trinoid-f2}
    \spaceH\text{ and }\spaceDS&:\quad
    f = (\lambda - \lambda_0)(\lambda + \lambda_0^{-1}),\quad
    \lambda_0\in\bbR\setminus\{0\},
  \end{align}
\end{subequations}
and
$\lambda_0,\,\lambda_0^{-1}$
are the \sym points  $\lambda_0,\lambda_1$ are determined by
\begin{equation}
  \label{eq:delaunay-sym-points}
  \spaceS \text{ and }\spaceADS:\ \lambda_1 = 1/\lambda_0\in\Sone\setminus\{\pm 1\};
  \quad\quad
  \spaceH\text{ and }\spaceDS:\ \lambda_1 = -1/\lambda_0\in\bbR^\ast.
\end{equation}

For isosceles trinoids choose ends $z=1,\,-1,\,\infty$ and
\begin{equation}
  \label{eq:trinoid-hopf}
  Q = \frac{4a + b(z^2-1)}{ {(z^2-1)}^2 }
\end{equation}
satisfying $\qres_{z= \pm 1}Q\,\deriv z^2 = a$ and $\qres_{z=\infty}Q\,\deriv z^2 = b$.

\subsection{Unitarization}

The isosceles trinoid potential~\eqref{eq:trinoid-potential}--\eqref{eq:trinoid-hopf}
has symmetries
\begin{subequations}
  \label{eq:trinoid-potential-symmetry}
  \begin{gather}
    \sigma^\ast\xi(\lambda) = \xi(\lambda) . g_1\quad
    \sigma(z)=-z,\quad
    g_1=\diag(\imi,\,-\imi)\\
    \ol{\tau^\ast\xi(\delta/\ol{\lambda})} = \xi(\lambda) . g_2,\quad
    \tau(z) =\ol{z},\quad
    g_2=\diag(\sqrt{\delta}/\lambda,\,\lambda/\sqrt{\delta}).
  \end{gather}
\end{subequations}
Let $M_0$ and $M_1$ be the monodromies around
$z=1$ and $z=-1$ respectively, with basepoint $\Phi(0)=\id$.
The symmetries of the potential imply
\begin{equation}
  \label{eq:trinoid-monodromy}
  M_0 = \begin{bmatrix}r & p\lambda\\ -q\lambda^{-1} & r^\ast\end{bmatrix},
    \quad
    M_1 = \begin{bmatrix}r & -p\lambda\\q\lambda^{-1} & r^\ast\end{bmatrix}
\end{equation}
for some holomorphic functions $p,\,q,\,r:\bbC^\ast\to\bbC$
satisfying $rr^\ast + p q = 1$ and
\begin{equation}
  p^\ast = \delta p\quad\text{and}\quad q^\ast = \delta q.
\end{equation}

\begin{lemma}
  \label{lem:trinoid-monodromy}
  If $q/p^\ast$ in~\eqref{eq:trinoid-monodromy} has a Birkhoff factorization
  \begin{equation}
    q/p^\ast = \delta\epsilon x_+^\ast x_+,\quad
    x_+:\disk_+\to\bbC^\ast
  \end{equation}
  then $M_0$ and $M_1$ are unitarizable in the sense of~\eqref{eq:unitary}.
\end{lemma}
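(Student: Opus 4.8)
The plan is to produce a \emph{diagonal} unitarizer. I would set $X = \diag(x,\,x^{-1})$ for a scalar loop $x$ to be chosen, so that
\[
  X M_0 X^{-1} = \begin{bmatrix} r & x^2 p\lambda\\ -x^{-2}q\lambda^{-1} & r^\ast\end{bmatrix},
  \qquad
  X M_1 X^{-1} = \begin{bmatrix} r & -x^2 p\lambda\\ x^{-2}q\lambda^{-1} & r^\ast\end{bmatrix}.
\]
From~\eqref{eq:unitary} one reads off that a loop $M = (M_{ij})\in\matSL_2\bbC$ is unitary precisely when $M_{22} = M_{11}^\ast$ and $M_{21} = -\epsilon\,M_{12}^\ast$ (the remaining two identities then follow, since $\ast$ is an involution and the constant $\epsilon$ satisfies $\epsilon^\ast=\epsilon$). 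For $XM_0X^{-1}$ the diagonal relation holds automatically because $(r^\ast)^\ast = r$, so the only constraint is the single off-diagonal identity $-x^{-2}q\lambda^{-1} = -\epsilon\,(x^2 p\lambda)^\ast$; and the very same identity is the unitarity condition for $XM_1X^{-1}$.

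Next I would reduce this identity to a scalar equation for $x$ using the structural relations $p^\ast = \delta p$ and $q^\ast = \delta q$ recorded in~\eqref{eq:trinoid-monodromy}, together with $\lambda^\ast = \delta/\lambda$ (immediate from~\eqref{eq:scalar-star}) and the multiplicativity of $\ast$ on scalar loops. Since $(x^2 p\lambda)^\ast = (x^\ast)^2\,p^\ast\,\lambda^\ast = (x^\ast)^2\,p/\lambda$, the condition collapses to $x^{-2}q = \epsilon\,(x^\ast)^2\,p$, i.e.
\[
  (x\,x^\ast)^2 \;=\; \epsilon\,q/p \;=\; x_+^\ast x_+,
\]
the last equality following from $q/p = \delta\,q/p^\ast$ (again by $p^\ast = \delta p$) combined with the hypothesised factorization $q/p^\ast = \delta\epsilon\,x_+^\ast x_+$. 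Because $x_+\colon\disk_+\to\bbC^\ast$ is nowhere zero on the simply connected disk it admits a holomorphic square root there; taking $x$ to be such a root gives $(x^\ast)^2 = (x^2)^\ast = x_+^\ast$, so the displayed equation holds. Then $X = \diag(x,\,x^{-1})$ is holomorphic and invertible on all of $\disk_+$, hence lies in $\Lambda_+\matSL_2(\bbC)$; by construction $XM_0X^{-1}$ and $XM_1X^{-1}$ are unitary, and the third monodromy $(M_0M_1)^{-1}$ around $z=\infty$ is then unitary as well, as a product of unitary loops.

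The computation itself is short --- the symmetries~\eqref{eq:trinoid-potential-symmetry} of the potential are exactly what force the cancellations above --- so the one point that genuinely needs care, and the reason the hypothesis asks for $x_+$ valued in $\bbC^\ast$ on the \emph{closed} disk rather than merely for some factorization along $\Sone$, is that the unitarizer must acquire neither a pole nor a zero anywhere on $\disk_+$. Were $x$ to vanish or blow up at an interior point, $X\Phi$ would fail to admit an Iwasawa factorization exactly there and the unitarized frame would not close up to a trinoid; the nowhere-vanishing of $x_+$ on $\disk_+$ is precisely what secures $X\in\Lambda_+\matSL_2(\bbC)$, which is the form of unitarizer required in this section.
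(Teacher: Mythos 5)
Your proposal is correct and is essentially the paper's own argument: solving your scalar condition $(x\,x^\ast)^2 = x_+^\ast x_+$ by a holomorphic square root of $x_+$ on the simply connected disk produces exactly the unitarizer $X=\diag(\sqrt{x_+},\,1/\sqrt{x_+})$ used in the paper, whose conjugates of $M_0,\,M_1$ have the unitary form $\bigl[\begin{smallmatrix} r & \pm s\\ \mp\epsilon s^\ast & r^\ast\end{smallmatrix}\bigr]$ with $s=px_+\lambda$. The only difference is presentational: you derive the diagonal unitarizer from the entrywise unitarity conditions, while the paper states it and verifies directly.
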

\begin{proof}
  If $q/p^\ast = \delta\epsilon x_+^\ast x_+$, then
  $x_+:\disk_+\to\bbC^\ast$ has a single-valued square root $\disk_+\to\bbC^\ast$,
  so $X := \diag(\sqrt{x_+},\,1/\sqrt{x_+})$ is a single-valued
  map $\disk_+\to\matSL_2(\bbC)$. Then
  \begin{equation}
    P_0:=XM_0X^{-1} = \begin{bmatrix}r & s\\-\epsilon s^\ast & r^\ast\end{bmatrix},
      \quad
      P_1:=XM_1X^{-1} = \begin{bmatrix}r & -s\\\epsilon s^\ast & r^\ast\end{bmatrix},
        \quad
        s := px_+\lambda
  \end{equation}
  extend holomorphically to $\Sone$ and
  satisfy $P_0=P_0^\ast$ and $P_1=P_1^\ast$
  in the sense of~\eqref{eq:unitary}.
\end{proof}

\subsection{Scalar Birkhoff factorization}
\label{sec:birkhoff}
To unitarize the trinoid monodromy via lemma~\ref{lem:trinoid-monodromy}
we need a more detailed analysis of the scalar Birkhoff factorization.
With
$\Pone = \disk_+ \sqcup \Sone \sqcup \disk_-$,
\begin{equation}
  \disk_+:=\{\lambda\in\bbC\suchthat\abs{\lambda}<1\}
  \quad \disk_-:=\{\lambda\in\bbC\suchthat\abs{\lambda}>1\}\cup\{\infty\}
\end{equation}
let
\begin{subequations}
  \begin{align}
    \Lambda &= \{\text{real analytic loops on $\Sone\to\bbC^\ast$}\}\\
    \Lambda_+ &= \{f\in\Lambda\suchthat \text{$f$ is the boundary of a holomorphic
      map $\disk_+\to\bbC^\ast$}\}\\
    \Lambda_- &= \{f\in\Lambda\suchthat \text{$f$ is the boundary of a holomorphic
      map $\disk_-\to\bbC^\ast$}\}.
  \end{align}
\end{subequations}

1.
By~\cite{Pressley_Segal_1986}, every loop $f\in\Lambda$ has a unique Birkhoff factorization
\begin{subequations}
  \begin{gather}
    f = c \lambda^n f_- f_+,\quad n\in\bbZ,\quad c\in\bbC^\ast,\quad\\
    f_+\in\Lambda_+,\quad f_-\in\Lambda_-,\quad
    f_+(0)=1,\quad f_-(\infty)=1.
  \end{gather}
\end{subequations}

Let star be as in~\eqref{eq:scalar-star} for either choice of $\delta\in\{\pm 1\}$.

2.
If $f\in\Lambda$ satisfies $f=f^\ast$, then $f$ has a unique Birkhoff factorization
\begin{equation}
  f = \epsilon f_+^\ast f_+,\quad \epsilon\in\{\pm 1\},\quad
  f_+\in\Lambda_+,\quad f_+(0)\in\bbR_+.
\end{equation}
This defines an homomorphism
\begin{equation}
  \label{eq:sign0}
  \sign:\{f\in\Lambda\suchthat f=f^\ast\}\to\{\pm 1\},\quad
  f\mapsto \epsilon.
\end{equation}

A \emph{meromorphic loop} on $\Sone$ is meromorphic in a neighborhood of $\Sone$.
Let $\loopgroup$ be the subgroup of meromorphic loops on $\Sone$
\begin{equation}
  \loopgroup = \{\text{$f\suchthat f^\ast = f$ and $\Div f$ even}\},
\end{equation}
where $\Div f$ even means that the order of each zero and pole of $f$ on $\Sone$ is even.

3.
Every $f\in\loopgroup$ has a unique Birkhoff factorization
\begin{equation}
  f = \epsilon f_+^\ast f_+,\quad \epsilon\in\{\pm 1\},\quad
  \Div f_+ = \half\Div f
\end{equation}
where $f_+$ is the boundary of a holomorphic map $\disk_+\to\bbC^\ast$.
This extends the homomorphism~\eqref{eq:sign0}
to an homomorphism
\begin{equation}
  \sign:\loopgroup\to\{\pm 1\},\quad
  f\mapsto \epsilon.
\end{equation}

4.
For every meromorphic loop $f$ on $\Sone$ with $f=f^\ast$
\begin{equation}
  \label{eq:sign-square}
  \sign[f^2] = 1.
\end{equation}

\subsection{The trace polynomial}
Given a monodromy representation $M_0,\,M_1,\,M_2$ with $M_0M_1M_2=\id$ on the
three-punctured sphere,
define the polynomial~\cite{Goldman_1988}
\begin{equation}
  \label{eq:phi}
  \varphi = 1 - t_0^2 - t_1^2 - t_2^2 + 2 t_0 t_1 t_2,\quad
  t_k = \half\tr(M_k),\quad k\in\{0,\,1,\,2\}.
\end{equation}
The trace polynomial vanishes precisely where the monodromy representation
is reducible. In the case of the involution $\lambda\mapsto 1/\ol{\lambda}$,
if the halftraces are in $(-1,\,1)$, then
the monodromy is $\matSU_2$-unitarizable if $\varphi > 0$ and
$\matSU_{11}$-unitarizable if $\varphi < 0$.

For the trinoid potential~\eqref{eq:trinoid-potential}
with quadratic residues $(q_0,\,q_1,\,q_2)\in\bbR^3$  of $Q$,
and $f$ as in~\eqref{eq:trinoid-f},
\begin{equation}
  \label{eq:t}
  t_k = \cos(2\pi \nu_k),\quad
  \nu_k = \half - \half\sqrt{1 + q_k \kappa},\quad
  \kappa = 4\lambda^{-1}f(\lambda),\quad k\in\{0,\,1,\,2\}.
\end{equation}
The function $\kappa:\Sone\to\bbC$ satisfies $\kappa^\ast =\kappa$.

The trace polynomial $\varphi:\Sone\to\bbC$ for the monodromies
of the trinoid potential satisfies
$\varphi^\ast = \varphi$.
Putting together~\eqref{eq:phi} and~\eqref{eq:t},
its series expansion in $\kappa$ at $\kappa=0$ is
\begin{equation}
  \label{eq:phi-series}
  \varphi(\kappa) = c \kappa^4 + \mathrm{O}(\kappa^5),\quad
  c = \frac{\pi^4}{64}(q_0+q_1+q_2)(-q_0+q_1+q_2)(q_0-q_1+q_2)(q_0+q_1-q_2).
\end{equation}

\begin{lemma}
  \label{lem:phi}
  Choose $(q_0,\,q_1,\,q_2)\in\bbR^3$ with $c\ne 0$.
  For $s$ in a small enough interval $(0,\,s_0)$,
  $\varphi_{sq}\in\loopgroup$ and $\sign[\varphi_{sq}]=\sign(c)$.
\end{lemma}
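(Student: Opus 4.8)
The plan is to factor the trinoid trace polynomial into a manifestly sign-neutral part and a unit that is uniformly close to the constant $c$, and then read off $\sign[\varphi_{sq}]$ from the multiplicativity of $\sign$ on $\loopgroup$. First I would record the scaling identity: since each halftrace in~\eqref{eq:t} depends on the end residue only through the product $q_k\kappa$, replacing $(q_0,q_1,q_2)$ by $(sq_0,sq_1,sq_2)$ amounts to replacing $\kappa$ by $s\kappa$, so $\varphi_{sq}(\kappa)=\varphi_q(s\kappa)$ \emph{exactly} (as analytic functions, $\varphi$ being a fixed polynomial in the $t_k=-\cos(\pi\sqrt{1+q_k\kappa})$). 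Combined with the expansion~\eqref{eq:phi-series} this gives
\[
  \varphi_{sq} = s^4\,\kappa^4\,\Theta(s\kappa),\qquad
  \Theta(w):=\varphi_q(w)/w^4 = c + \mathrm{O}(w),
\]
where $\Theta$ is \emph{entire} in $w$ (each $t_k$ is an entire function of $q_k\kappa$, since $\cos$ is even, and $\varphi_q$ vanishes to order $\ge 4$ at $\kappa=0$) with $\Theta(0)=c\neq0$. In particular this already resolves the apparent branching in $\nu_k$: $\varphi$ is holomorphic wherever $\kappa$ is, hence on a neighbourhood of $\Sone$.

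Next I would verify $\varphi_{sq}\in\loopgroup$. The loop $\kappa=4\lambda^{-1}f(\lambda)$ is rational in $\lambda$ with no pole on $\Sone$, and by~\eqref{eq:trinoid-f} its zeros on $\Sone$, if any, are simple (two of them, at $\lambda=\lambda_0$ and $\lambda_0^{\pm1}$, for $\spaceS$ and $\spaceADS$, and in general none for $\spaceH$ and $\spaceDS$). Choose $s_0>0$ small enough that $\abs{\Theta(s\kappa)-c}<\abs{c}/2$ on the compact set $\Sone$ for all $s<s_0$ (possible since $\Theta(0)=c$ and $\kappa$ is bounded). Then $\Theta(s\kappa)$ is holomorphic and nowhere zero on $\Sone$, so the zeros of $\varphi_{sq}$ on $\Sone$ are precisely those of $\kappa^4$, each of order $4$; thus $\Div\varphi_{sq}$ is even, and together with $\varphi_{sq}^\ast=\varphi_{sq}$ (already noted for the trinoid trace polynomial) this shows $\varphi_{sq}\in\loopgroup$ and makes $\sign[\varphi_{sq}]$ meaningful.

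For the sign I would apply the homomorphism $\sign$ on $\loopgroup$ to the three factors of $\varphi_{sq}=s^4\cdot\kappa^4\cdot\Theta(s\kappa)$, each in $\loopgroup$ for $s<s_0$. The constant $s^4>0$ contributes $\sign[s^4]=1$; the factor $\kappa^4$ contributes $\sign[(\kappa^2)^2]=1$ by~\eqref{eq:sign-square}, since $\kappa^2$ is a meromorphic loop with $(\kappa^2)^\ast=\kappa^2$ (using $\kappa^\ast=\kappa$). For the last factor, $\Theta(s\kappa)/c=1+\mathrm{O}(s)$ takes values in the right half-plane, so a single-valued branch $\ell:=\log(\Theta(s\kappa)/c)$ exists, and $\ell^\ast=\ell$ because $\Theta(s\kappa)^\ast=\Theta(s\kappa)$ and $c\in\bbR$. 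The operation $\ast$ sends frequency $n$ to $-n$ (with a conjugation), so $\ell^\ast=\ell$ forces the negative-frequency part of $\ell$ to equal the $\ast$ of its positive-frequency part up to the constant mode, which is real and can be split evenly; writing $\ell=\ell_+ + \ell_+^\ast$ with $\ell_+$ the boundary value of a holomorphic map on $\disk_+$ and $\ell_+(0)\in\bbR$ gives $\Theta(s\kappa)=c\,g_+^\ast g_+$ with $g_+:=e^{\ell_+}\in\Lambda_+$. Then $\Theta(s\kappa)=\sign(c)\,h_+^\ast h_+$ with $h_+:=\sqrt{\abs{c}}\,g_+$ and $h_+(0)\in\bbR_+$, so by the uniqueness of the Birkhoff factorization in section~\ref{sec:birkhoff} we get $\sign[\Theta(s\kappa)]=\sign(c)$. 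Multiplying, $\sign[\varphi_{sq}]=\sign(c)$. (The last step could instead be deduced from local constancy of $\sign$ together with $\Theta(s\kappa)\to c$ as $s\to0$.)

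The routine parts here are the scaling identity and the frequency-splitting bookkeeping. I expect the two points needing genuine care to be: showing that $\varphi_{sq}$ really lies in $\loopgroup$ — which is where the hypothesis $c\neq0$ enters, excluding spurious zeros of $\Theta(s\kappa)$ on $\Sone$ and forcing the divisor of $\varphi_{sq}$ there to be exactly $4\Div\kappa$, hence even — and pinning down the sign of the near-constant unit $\Theta(s\kappa)$, for which the factored form $\varphi_{sq}=s^4\kappa^4\Theta(s\kappa)$ from the scaling identity is precisely what keeps the perturbation uniformly small on $\Sone$ and lets the $\log$/Birkhoff argument close.
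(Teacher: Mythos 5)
Your proposal is correct and follows essentially the same route as the paper: the scaling identity $\varphi_{sq}(\kappa)=\varphi_q(s\kappa)$ plus the expansion~\eqref{eq:phi-series} keep $\varphi_{sq}/\kappa^4$ uniformly close to a nonzero constant on $\Sone$, so the only zeros on $\Sone$ are those of $\kappa^4$ (order $4$, hence even divisor), giving $\varphi_{sq}\in\loopgroup$, and the sign is then read off from the half-plane observation together with~\eqref{eq:sign-square} and multiplicativity of $\sign$. Your only deviations are cosmetic: you carry the harmless $s^4$ normalization explicitly and certify the sign of the near-constant unit by a logarithm/frequency-splitting factorization, where the paper instead takes a single-valued square root $f$ of $\sign(c)\varphi_{sq}/\kappa^4$ and applies $\sign[(\kappa^2 f)^2]=1$.
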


\begin{proof}
  From the series expansion~\eqref{eq:phi-series},
  \begin{equation}
    \abs{\varphi_q/\kappa^4-c} < \abs{c/2}
    \quad\text{for all}\quad
    \abs{\kappa} < \kappa_0.
  \end{equation}
  Let $m = 1+\max_{\lambda\in\Sone}\abs{\kappa(\lambda)}$.
  Then for all $\abs{\kappa}<m$ and $s\in(0,\,\kappa_0/m)$,
  we have $\abs{s\kappa} < \kappa_0$, so
  \begin{equation}
    \abs{\varphi_{sq}(\kappa)/\kappa^4-c}
    =\abs{\varphi_{q}(s\kappa)/\kappa^4-c}
    < \abs{c/2}
  \end{equation}
  Hence $\varphi_{sq}/\kappa^4$ maps $\Sone$ into
  the disk centered at $c$ with radius $\abs{c/2}$.
  This implies that $\varphi_{sq}$ has no zeros
  on $\Sone$, except at the zeros of $\kappa$ in the case $\kappa$ has zeros on $\Sone$.
  Since these zeros are of order $4$, then $\varphi\in\loopgroup$.

  Since $\sign(c)\varphi_{sq}/\kappa^4$ takes values in the open right halfplane,
  it has a single-valued square root $f:\Sone\to\bbC^\ast$ satisfying $f^\ast = f$.
  By~\eqref{eq:sign-square},
  $\sign[\varphi_{sq}]=\sign(c)\sign[\kappa^4 f^2] = \sign(c)$.
\end{proof}

In the case of isosceles trinoids, for which $(q_0,\,q_1,\,q_2) = (a,\,a,\,b)$
with $a$, $b$ as in~\eqref{eq:trinoid-hopf},
then $c = \frac{\pi^4}{64}b^2(4a^2-b^2)$, so
$\sign[\varphi]=+1$ in a subregion of $\{\abs{b} < 2\abs{a}\}$.
and
$\sign[\varphi]=-1$ in a subregion of $\{\abs{b} > 2\abs{a}\}$.

\subsection{Trinoids}

\begin{theorem}
  \label{thm:trinoid}
  For each of the four symmetric spaces tabulated
  in~\eqref{eq:trinoid-table} there exists a two real parameter family
  of conformal CMC isosceles trinoid potentials satisfying the closing
  conditions with prescribed mean curvature $H$.
\end{theorem}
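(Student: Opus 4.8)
The plan is to verify, for the isosceles trinoid potential~\eqref{eq:trinoid-potential}--\eqref{eq:trinoid-hopf}, the two closing conditions of remark~\ref{rem:closing} together with the prescription of the mean curvature, reducing the one substantial point --- intrinsic closing --- to the scalar Birkhoff machinery of section~\ref{sec:birkhoff}. Fixing the symmetric space fixes $(\delta,\epsilon)$ via~\eqref{eq:trinoid-table} and the shape of $f$ via~\eqref{eq:trinoid-f}, and prescribing $H$ amounts to solving for $\lambda_0$ in the mean-curvature column of~\eqref{eq:sym-point} subject to~\eqref{eq:delaunay-sym-points}: this realizes every $H\in\bbR$ for $\spaceS,\spaceADS$ (where $\lambda_0\in\Sone\setminus\{\pm1\}$) and every $H\in(-1,1)$ for $\spaceH,\spaceDS$ (where $\lambda_0\in\bbR^\ast$). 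The extrinsic closing condition is then automatic: by~\eqref{eq:trinoid-f} and~\eqref{eq:delaunay-sym-points} one has $f(\lambda_0)=f(\lambda_1)=0$, so at either \sym point the potential degenerates to the nilpotent $\bigl[\begin{smallmatrix}0&\lambda^{-1}\\0&0\end{smallmatrix}\bigr]\deriv z$, which is holomorphic in $z$ everywhere on $\bbC$; hence the frame is single-valued in $z$ there, and all three monodromies equal $\id$ at $\lambda_0$ and $\lambda_1$. The Delaunay-eigenvalue property at each end is the regular-singular-point statement already recorded, each pole of $Q$ being \DPW gauge equivalent to a perturbation of a Delaunay potential.

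For intrinsic closing I would use the form~\eqref{eq:trinoid-monodromy} of $M_0,M_1$ forced by the symmetries~\eqref{eq:trinoid-potential-symmetry} and appeal to Lemma~\ref{lem:trinoid-monodromy}: it suffices that $q/p^\ast\in\loopgroup$ with $\sign[q/p^\ast]=\delta\epsilon$, for then the diagonal loop $X=\diag(\sqrt{x_+},1/\sqrt{x_+})\in\Lambda_+\matSL_2(\bbC)$ unitarizes the monodromy; since $X$ is $z$-constant, the conjugated frame $X\Phi$ has the same potential, unitary monodromy, and (by the previous paragraph) $\id$ at the \sym points, so by theorems~\ref{thm:gwr} and~\ref{thm:unitary-frame} it yields a conformal CMC trinoid of mean curvature $H$ wherever its Iwasawa factorization exists.

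The key point is that this Birkhoff sign is read off from the trace polynomial~\eqref{eq:phi}. Writing $t_0=\tfrac12\tr M_0$ and using $rr^\ast+pq=1$, an elementary manipulation of~\eqref{eq:phi} for the isosceles monodromy gives
\begin{equation*}
\varphi=-\,pq\,(r-r^\ast)^2,\qquad q/p^\ast=\delta\,(q/p),\qquad pq=\delta\,p^2\,(q/p^\ast).
\end{equation*}
Now $r-r^\ast$ is anti-$\ast$-invariant, so $\imi(r-r^\ast)$ is $\ast$-invariant, and $p$ (if $\delta=1$) resp.\ $\imi p$ (if $\delta=-1$) is $\ast$-invariant; hence by~\eqref{eq:sign-square}, $\sign[(r-r^\ast)^2]=-1$ and $\sign[p^2]=\delta$. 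Combining this with multiplicativity of $\sign$ on $\loopgroup$ and $\sign[-1]=-1$, $\sign[\delta]=\delta$, the three displayed identities give $\sign[q/p^\ast]=\sign[pq]=\sign[\varphi]$. Applying Lemma~\ref{lem:phi} to rescaled residues $(sa,sa,sb)$ with $s>0$ small (equivalently, $\abs a,\abs b$ small) shows $\varphi\in\loopgroup$ --- in particular $\Div\varphi$, hence $\Div(pq)$ and $\Div(q/p^\ast)$, is even --- and $\sign[\varphi]=\sign(c)$ with $c=\tfrac{\pi^4}{64}b^2(4a^2-b^2)$. Choosing $(a,b)$ in the open region where $4a^2-b^2$ has sign $\delta\epsilon$ --- that is, $\abs b<2\abs a$ for $\spaceS$ and $\spaceDS$, and $\abs b>2\abs a$ for $\spaceADS$ and $\spaceH$ --- then makes $\sign[q/p^\ast]=\delta\epsilon$, meeting the hypothesis of Lemma~\ref{lem:trinoid-monodromy}. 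Letting $(a,b)$ range over this region produces the asserted two-parameter family of potentials for the given $H$.

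The step I expect to be the main obstacle is the sign bookkeeping above: deriving the three identities cleanly and tracking $\sign$ through squares, through the constant $\delta$, and across $q/p^\ast$ versus $pq$, all while remaining in a regime where Lemma~\ref{lem:phi} applies and where $\varphi$ has no zeros on $\Sone$ except the order-four zeros at the zeros of $\kappa$. A related technicality, relevant only to $\spaceS$ and $\spaceADS$ (where $\lambda_0,\lambda_0^{-1}\in\Sone$ are zeros of $\kappa$, so the local monodromies there degenerate to $\id$), is to check that the simple zeros of $p$ and of $q$ at the \sym points cancel, so that $q/p^\ast$ is a meromorphic loop with even divisor there; for $\spaceH$ and $\spaceDS$, $\lambda_0\in\bbR^\ast$ lies off $\Sone$ and $\kappa$ has no zeros on $\Sone$. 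Finally, one should note that a unitarizer in $\Lambda_+\matSL_2(\bbC)$, not an arbitrary one, is required so that $X\Phi$ retains an Iwasawa factorization and theorem~\ref{thm:gwr} applies --- this being precisely what the diagonal unitarizer of Lemma~\ref{lem:trinoid-monodromy} furnishes.
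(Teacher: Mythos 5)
Your proposal is correct and follows essentially the same route as the paper: the symmetry-forced monodromy form~\eqref{eq:trinoid-monodromy}, the identity $\varphi = {(\imi(r-r^\ast))}^2pq$, Lemma~\ref{lem:phi} to fix $\sign[\varphi]=\delta\epsilon$ on a region of $(a,b)$, the transfer of that sign to $q/p^\ast$ via~\eqref{eq:sign-square} (your $pq=\delta p^2(q/p^\ast)$ bookkeeping is just a rearrangement of the paper's $q/p^\ast=(pq)/(p^\ast p)$), and then Lemma~\ref{lem:trinoid-monodromy} plus theorem~\ref{thm:gwr}. Your extra remarks on extrinsic closing at the \sym points and on the evenness of divisors are details the paper leaves to remark~\ref{rem:closing} and the group structure of $\loopgroup$, not a different method.
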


\begin{proof}
  Choose a symmetric space,
  $\epsilon\in\{\pm 1\}$ and $\delta\in\{\pm 1\}$
  as tabulated in~\eqref{eq:trinoid-table},
  and a isosceles trinoid potential~\eqref{eq:trinoid-potential}--\eqref{eq:trinoid-hopf}.

  By lemma~\ref{lem:phi}, there exists a subregion of $\{(a,\,b)\in\bbR^2\}$
  with $(a,\,b)$ as in~\eqref{eq:trinoid-hopf},
  in which $\varphi\in \loopgroup$ and $\sign[\varphi]=\delta\epsilon$.

  Let $M_0$ and $M_1$ be the monodromies
  of the meromorphic frame $\Phi$ based at $\Phi(0)=\id$
  as in~\eqref{eq:trinoid-monodromy}.
  The respective halftraces $t_0,\,t_1,\,t_2$ of $M_0,\,M_1,\,M_2 = M_1^{-1}M_0^{-1}$
  are
  \begin{equation}
    t_0 = t_1 = \half(r+r^\ast)
    \quad\text{and}\quad
    t_2 = \half(r^2 - 2pq + {r^\ast}^2)
  \end{equation}
  so the trace polynomial~\eqref{eq:phi} is
  \begin{equation}
    \label{eq:phi-in-terms-of-monodromy}
    \varphi = {(\imi(r-r^\ast))}^2pq.
  \end{equation}

  Since $\varphi\in \loopgroup$ and ${(\imi(r-r^\ast))}^2\in \loopgroup$, then $pq\in \loopgroup$.
  Since $p^\ast p\in \loopgroup$ then $q/p^\ast = (pq)/(p^\ast p)\in \loopgroup$.
  By~\eqref{eq:sign-square}
  $\sign[{(\imi(r-r^\ast))}^2]=1$
  and $\sign[p^\ast p] = 1$, so
  \begin{subequations}
    \begin{align}
      \sign[q/p^\ast] &=
      \sign[(pq)/(p^\ast p)] =
      \sign[pq]\sign[p^\ast p] = \sign[pq]\\
      &=
      \sign[{(\imi(r-r^\ast))}^2]\sign[pq] = \sign[\varphi] = \delta\epsilon.
    \end{align}
  \end{subequations}

  By the definition of $\sign$, $p/q^\ast$ has a factorization
  $p/q^\ast = \delta\epsilon x_+^\ast x_+$, where $x_+:\disk_+\to\bbC^\ast$.
  By lemma~\ref{lem:trinoid-monodromy}
  there exists a unitarizer $X$ of the monodromy in the sense of~\eqref{eq:unitary}.

  By theorem~\ref{thm:gwr} the immersion induced by $X\Phi$
  via $r$-Iwasawa factorization
  is conformal and CMC, and by remark~\ref{rem:closing} it closes around
  each of its three Delaunay ends.
\end{proof}

\begin{remark}
  \mbox{}

  1. In the case of $\spaceS$
  theorem~\ref{thm:trinoid} reconstructs a two-dimensional subfamily
  of the three-dimensional family
  of trinoids constructed in~\cite{Schmitt_Kilian_Kobayashi_Rossman_2007}
  without the isosceles constraint.

  2. In the other three symmetric spaces,
  where the Iwasawa factorization can leave the big cell,
  the domain of the trinoids constructed by
  theorem~\ref{thm:trinoid} is not known,
  but numerical experiments indicate
  that in general the whole punctured Riemann sphere is mapped into the light cone,
  intersecting the ideal boundary along curves
  (figures~\ref{fig:noid-h3},~\ref{fig:noid-h3-iso},~\ref{fig:trinoid-h3-halfspace} and~\ref{fig:trinoid}).

  3. Noids with more than three ends can be constructed
  using a cyclic branched cover of the Riemann sphere
  (figures~\ref{fig:noid-h3} and~\ref{fig:noid-h3-iso}).
\end{remark}

\subsection{Dressing}

\begin{figure}[b]
  \centering
  \includegraphics[width=0.45\textwidth]{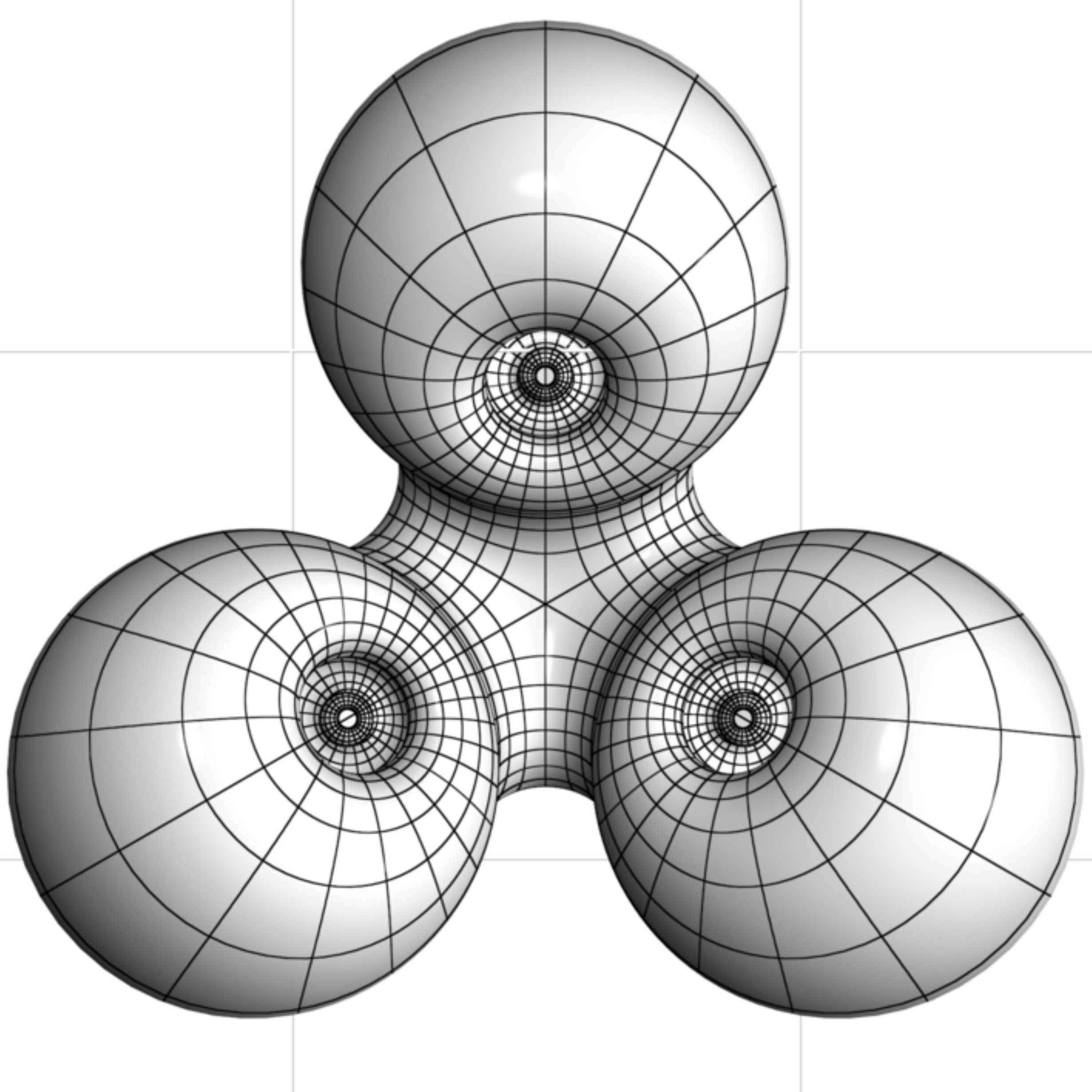}
  \includegraphics[width=0.45\textwidth]{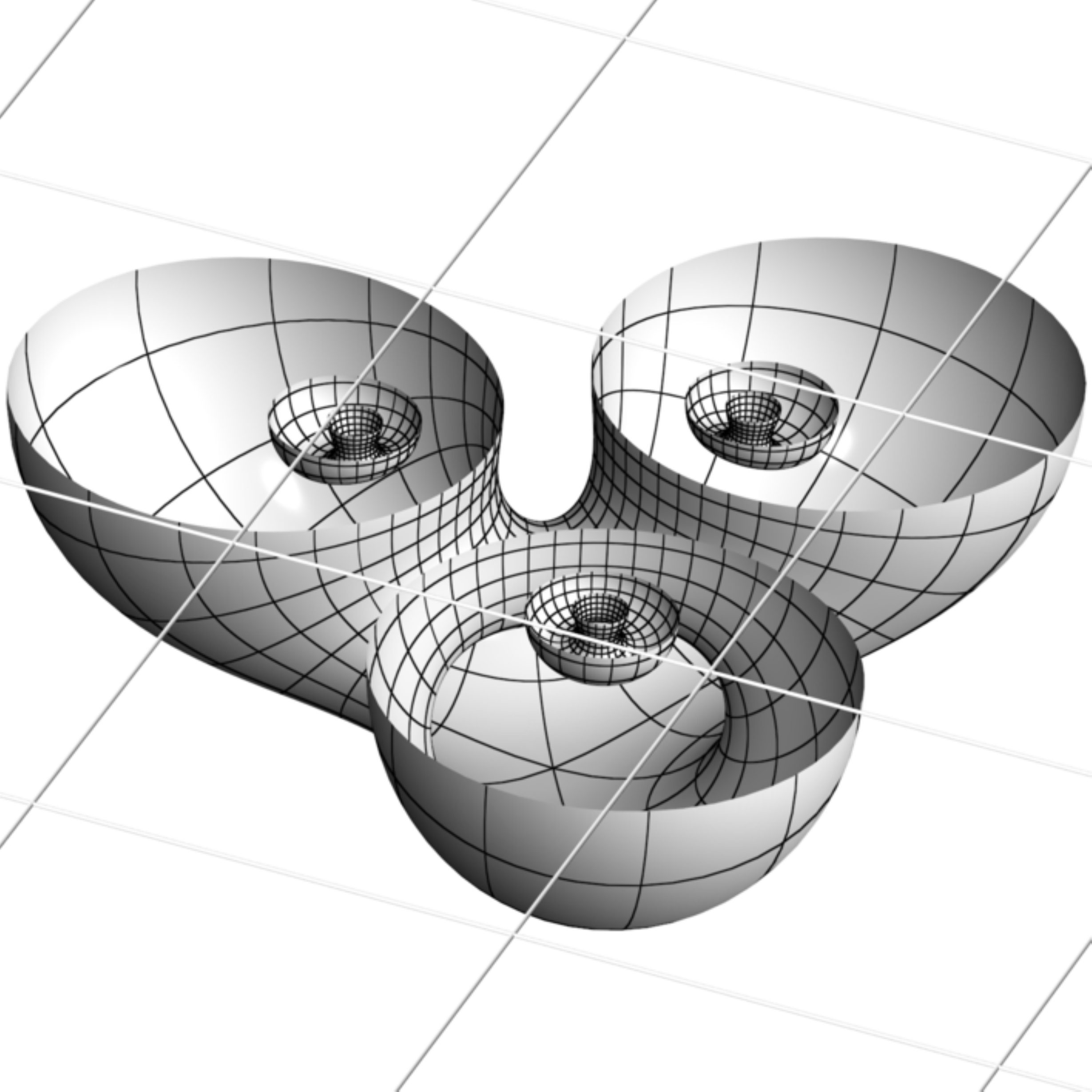}
  \caption{\small
    Two views of an equilateral minimal trinoid in
    $\spaceH\cup\Stwo\cup\spaceH$ stereographically projected to $\bbR^3$ via
    the Poincar\'e halfspace model.  The cutaway view (right) shows the
    intersection of the surface with the ideal boundary along
    nested topological circles.}
  \label{fig:trinoid-h3-halfspace}
\end{figure}

By theorem~\ref{thm:trinoid} we can construct equilateral
trinoids in $\spaceDS$ (figure~\ref{fig:trinoid-ds3}),
and isosceles non-equilateral trinoids in
$\spaceH$ (figure~\ref{fig:noid-h3-iso}).
In this section we construct equilateral trinoids in
$\spaceH$ (figure~\ref{fig:noid-h3}) by dressing equilateral trinoids
in $\spaceDS$. The dressing action interchanges the real
forms for $\spaceDS$ and $\spaceH$.

More explicitly, the \emph{dressing action} of a loop $g$ (defined on
an a circle of radius $r\in(0,\,1]$) on a frame $F$, written $g\dress F$,
is by definition the $r$-unitary factor of $gF$ of its $r$-Iwasawa factorization.
In analogy to~\cite{Terng_Uhlenbeck_2000} we take $g$ to be
a diagonal \emph{simple factor}
\begin{equation}
  \label{eq:simple-factor}
    g = \diag( p^{1/2},\, p^{-1/2} ),\quad
  p = \frac{\lambda - \mu}{\ol{\mu}\lambda + 1},\quad
  p^\ast = -\frac{1}{p}
\end{equation}
on an $r$-circle, $r < \abs{\mu}$.

\begin{lemma}
  \label{lem:simple-factor-dressing-swap}
  Diagonal simple factor dressing $F\mapsto g\dress F$
  interchanges the real form for $\spaceDS$ with the real form for $\spaceH$.
\end{lemma}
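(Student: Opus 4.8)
The plan is to track how the $\ast$-operator transforms under left multiplication by the diagonal simple factor $g$ and the subsequent $r$-Iwasawa factorization. Recall that the real forms for $\spaceDS$ and $\spaceH$ differ only in the sign $\delta$: both use $\epsilon=-1$ (i.e.\ conjugation by $\eta = \diag(\imi,-\imi)$), but $\spaceDS$ has $\delta=1$ and $\spaceH$ has $\delta=-1$ in the notation of~\eqref{eq:unitary}. Write $\ast$ for the $\spaceDS$-involution and $\ast'$ for the $\spaceH$-involution, so that $X^{\ast'}(\lambda) = \ol{\eta X(-1/\ol\lambda)\eta^{-1}}^{\mathrm t\,-1}$ while $X^\ast(\lambda) = \ol{\eta X(1/\ol\lambda)\eta^{-1}}^{\mathrm t\,-1}$. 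The two are related by the deck substitution $\lambda\mapsto -\lambda$: precisely, $X^{\ast'}(\lambda) = \bigl(X(-\,\cdot\,)\bigr)^\ast(\lambda)$.

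First I would compute $g^\ast$ versus $g^{\ast'}$. From~\eqref{eq:simple-factor} the simple factor is diagonal with entries $p^{\pm 1/2}$ and $p = (\lambda-\mu)/(\ol\mu\lambda+1)$; one checks that for the $\spaceDS$-star $p^\ast(\lambda) = \ol{p(1/\ol\lambda)} = -1/p(\lambda)$, which is the identity recorded in~\eqref{eq:simple-factor}. Then I would verify that for the $\spaceH$-star one instead gets $p^{\ast'}(\lambda) = \ol{p(-1/\ol\lambda)}$, and that this equals $p(\lambda)$ itself (up to the usual Möbius bookkeeping with $\mu$): in other words, $g$ is $\spaceDS$-antiunitary-like but $\spaceH$-unitary, or vice versa — the precise statement being that $g^\ast = g^{-1}$ composed with the sign swap. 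The key algebraic fact is that replacing $1/\ol\lambda$ by $-1/\ol\lambda$ sends the pole/zero pattern of $p$ to that of $1/p$, so the star of $g$ for one real form is (a constant times) $g$ and for the other is $g^{-1}$.

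Next I would use the characterization of dressing as the $r$-unitary factor. Suppose $F$ is $\spaceDS$-unitary, i.e.\ $F^\ast = F$. Factor $gF = \tilde F \tilde B$ with $\tilde F$ $r$-unitary for the $\spaceDS$-star and $\tilde B \in \Lambda^r_+$; by definition $g\dress F = \tilde F$. I want to show $\tilde F$ is $\spaceH$-unitary, i.e.\ $\tilde F^{\ast'} = \tilde F$. Apply $\ast'$ to $gF = \tilde F\tilde B$: using $\ast' = $ (sign swap)$\circ\ast$ and the multiplicativity of these involutions, $(gF)^{\ast'} = F^{\ast'} g^{\ast'}$. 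Now $F^{\ast'}(\lambda) = \ol{\eta F(-1/\ol\lambda)\eta^{-1}}^{\mathrm t\,-1}$; this is not $F$ in general, but it is obtained from $F^\ast = F$ by the substitution $\lambda\mapsto -\lambda$, so $F^{\ast'}$ equals $F$ evaluated with a reflected argument — and crucially $g^{\ast'}$ exactly compensates the discrepancy between $g$ and its reflection because of the pole-pattern computation above. Carrying the substitution through, $(gF)^{\ast'} = h\,(gF)$ for an explicit $\Lambda_+^r$-valued (in fact diagonal) loop $h$ coming from $\tilde B^{\ast'}\tilde B^{-1}$-type terms; then uniqueness of the $r$-Iwasawa factorization for the $\spaceH$-star forces the $\spaceH$-unitary factor of $gF$ to coincide with $\tilde F$, giving $\tilde F^{\ast'} = \tilde F$. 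The symmetric argument, starting from an $\spaceH$-unitary $F$ and using $(g^{-1})^\ast \sim g$, shows $g\dress F$ is $\spaceDS$-unitary, so the interchange is genuine.

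The main obstacle I anticipate is bookkeeping the interaction between the $r$-circle (with $r<|\mu|<1$) and the two different star-operations, which invert $\lambda$ through circles of radius $1$ rather than $r$: one must be careful that $g$, defined and holomorphic in a neighborhood of the $r$-circle with its only pole at $\lambda = -1/\ol\mu$ outside the unit disk, has its $\ast$- and $\ast'$-images landing in the correct $\Lambda^r_\pm$ pieces so that the uniqueness of $r$-Iwasawa can be invoked. Concretely, the delicate point is checking that $g^{\ast'}$ (respectively $g^\ast$) is holomorphic and invertible on the closed $r$-disk — equivalently that its pole $\lambda = \mu$ lies in $\disk^r_-$ — which is where the hypothesis $r<|\mu|$ is used. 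Once the domains are pinned down, the rest is the straightforward substitution-and-uniqueness argument sketched above.
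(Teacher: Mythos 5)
Your argument rests on a misidentification of the two real forms, and that error propagates into the key computations. In the paper's indexing~\eqref{eq:unitary}--\eqref{eq:trinoid-table} (equivalently~\eqref{eq:real-form}), $\spaceH$ corresponds to $(\delta,\epsilon)=(-1,1)$ and $\spaceDS$ to $(\delta,\epsilon)=(-1,-1)$: both involutions use the \emph{same} substitution $\lambda\mapsto-1/\ol\lambda$, and they differ only by conjugation with $e_0=\diag(\imi,-\imi)$, i.e.\ by a sign on the off-diagonal entries. Your premise that they share $\epsilon=-1$ and differ in $\delta$, hence are related by the deck substitution $\lambda\mapsto-\lambda$, is false. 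Consequently the scalar facts at the heart of your proof are wrong: with $p=(\lambda-\mu)/(\ol\mu\lambda+1)$ one has $\ol{p(-1/\ol\lambda)}=-1/p(\lambda)$, and this one identity $p^\ast=-1/p$ is the relevant one for \emph{both} real forms (the $e_0$-conjugation acts trivially on diagonal loops, so $g$ has the same star for $\spaceH$ and $\spaceDS$); it is not true that the $\spaceDS$-star of $p$ is $\ol{p(1/\ol\lambda)}$, nor that the $\spaceH$-star of $p$ equals $p$. So the compensation mechanism you invoke (``$g^{\ast'}$ exactly compensates the discrepancy between $g$ and its reflection'') does not exist, and the uniqueness-of-Iwasawa step that is supposed to finish the proof --- the asserted relation $(gF)^{\ast'}=h\,(gF)$ with $h\in\Lambda^r_+$ is never derived --- has nothing to rest on.

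The actual mechanism, which is the paper's proof, is entrywise and purely algebraic. Writing $f^\ast(\lambda)=\ol{f(-1/\ol\lambda)}$ for the common scalar star, $\spaceDS$-unitary loops have the shape $\bigl[\begin{smallmatrix}s&t\\ t^\ast&s^\ast\end{smallmatrix}\bigr]$ and $\spaceH$-unitary loops the shape $\bigl[\begin{smallmatrix}s&t\\ -t^\ast&s^\ast\end{smallmatrix}\bigr]$. The paper uses the explicit simple-factor dressing formula $g\dress F=gFk^{-1}g^{-1}$, where $k$ is the constant $\spaceDS$-unitary matrix built from $F^{-1}(\mu)\ell$; since $k$ is constant, $Fk^{-1}$ keeps the $\spaceDS$ shape, and conjugation by $g=\diag(p^{1/2},p^{-1/2})$ multiplies the off-diagonal entries by $p^{\pm1}$, so $p^\ast=-1/p$ turns the lower-left entry into $-(tp)^\ast$, i.e.\ converts the $\spaceDS$ shape into the $\spaceH$ shape (and conversely, up to a sign). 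If you prefer to avoid quoting the dressing formula and argue via uniqueness of the $r$-Iwasawa factorization, you must in effect prove it: show $gk\in\Lambda^r_+$ and that $gFk^{-1}g^{-1}$ is $r$-unitary for the target form, using $r<\abs{\mu}$ and the position of the zero $\mu$ and pole $-1/\ol\mu$ of $p$ outside the $r$-circle --- your concern about domains is legitimate, but it cannot substitute for the corrected star computation.
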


\begin{proof}
The dressing action of the simple factor $g$ on a frame $F$ in the real form for
$\spaceDS$
is computed explicitly and algebraically as
\begin{equation}
  g\dress F = g F k^{-1} g^{-1},\quad
  k = \tfrac{1}{\sqrt{{\abs{u}}^2 - {\abs{v}}^2}}
  \big[\begin{smallmatrix}u & \ol{v}\\v & \ol{u}\end{smallmatrix}\big],\quad
  \big[\begin{smallmatrix}u\\v\end{smallmatrix}\big] = F^{-1}(\mu)\ell,\quad\
  \ell = \big[\begin{smallmatrix}1\\0\end{smallmatrix}\big].
\end{equation}
We have
  \begin{equation}
    Fk^{-1} = \begin{bmatrix}s & t\\t^\ast & s^\ast\end{bmatrix}
      \quad\Longrightarrow\quad
      g\dress F = g F k^{-1} g^{-1} =
      \begin{bmatrix}s & t p\\t^\ast p^\ast & s^\ast\end{bmatrix}.
  \end{equation}
  Hence $g\dress F$ is in the real form for $\spaceH$.
  The proof for the other direction is the same except for a sign change.
\end{proof}

\begin{theorem}
  \label{thm:trinoid-h3}
There exists a real
$1$-parameter family of equilateral trinoids in $\spaceH$
for each mean curvature $\abs{H}<1$
(figure~\ref{fig:noid-h3}).
\end{theorem}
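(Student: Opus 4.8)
The plan is to realize each equilateral trinoid in $\spaceH$ as a dressing of an equilateral trinoid in $\spaceDS$, using Lemma~\ref{lem:simple-factor-dressing-swap} to pass between the two real forms. First I would fix $H$ with $\abs{H}<1$ and take the \sym points $\lambda_0\in\bbR^\ast$, $\lambda_1=-1/\lambda_0$ realizing $H$; by~\eqref{eq:sym-point} and~\eqref{eq:delaunay-sym-points} these are the \sym points for both $\spaceH$ and $\spaceDS$, so the mean curvature is the same before and after dressing. For equilateral weights $(q_0,q_1,q_2)=(a,a,a)$ the constant in~\eqref{eq:phi-series} is $c=\tfrac{3\pi^4}{64}a^4>0$, hence $\sign[\varphi]=+1=\delta\epsilon$ for $\spaceDS$ (where $\delta=\epsilon=-1$ in~\eqref{eq:trinoid-table}); thus Theorem~\ref{thm:trinoid}, restricted to the equilateral locus, applies and produces for this $H$ a real one-parameter family --- parametrized by the common necksize $a$ in a small interval supplied by Lemma~\ref{lem:phi} --- of equilateral \DPW potentials on the three-punctured sphere whose frame $\Phi$ with $\Phi(0)=\id$ has $\spaceDS$-unitary monodromy obeying the closing conditions, hence gives an equilateral minimal trinoid in $\spaceDS$. (Note that $\spaceH$ itself is not accessible this way, since it would require $\sign[\varphi]=\delta\epsilon=-1$, impossible for equilateral weights.)

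Next I would dress the $\spaceDS$-unitary frame $F$ of such a trinoid by a diagonal simple factor $g$ as in~\eqref{eq:simple-factor}, with $\abs{\mu}>1$ chosen so that $g$ is regular at $\lambda_0$ and $\lambda_1$. By Lemma~\ref{lem:simple-factor-dressing-swap} the dressed frame $\hat F:=g\dress F$ lies in the real form for $\spaceH$; writing $gF=\hat F\hat B$ for the $\spaceH$-Iwasawa factorization, the dressed connection $\hat\eta=\eta.(\hat B^{-1})$ is flat, still has a simple pole at $\lambda=0$ with $\det\hat\eta_{-1}=0$ and the required nondegeneracy (because $\hat B$ is holomorphic and invertible at $\lambda=0$ and $\mu\ne 0$), and satisfies $\hat\eta^\ast=\hat\eta$ since $\hat F^\ast=\hat F$; so $\hat\eta$ is a unitary connection and, by Theorem~\ref{thm:gwr} with \sym points $\lambda_0,\lambda_1$, $\hat F$ induces a conformal minimal immersion of the three-punctured sphere into $\spaceH$ of mean curvature $H$ wherever $\hat F$ is nondegenerate. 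Dressing by $g$ conjugates each end monodromy, so the end monodromies of $\hat F$ have the same eigenvalues as those of $F$ and are $\spaceH$-unitary; since near each puncture the potential is still \DPW gauge equivalent to a perturbation of a Delaunay potential, these remain Delaunay eigenvalues. The intrinsic closing condition then holds because the end monodromies of $\hat F$ are $\spaceH$-unitary, and the extrinsic closing condition because $M_k(\lambda_0)=M_k(\lambda_1)=\pm\id$ is conjugation invariant and $g$ is regular at $\lambda_0,\lambda_1$.

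To see that $\hat F$ is still equilateral, I would work in coordinates placing the three ends at the cube roots of unity, in which the order-three symmetry is the rotation $z\mapsto e^{2\pi\imi/3}z$ and is realized on the frame by conjugation by a constant diagonal loop; since $g$ is diagonal it commutes with this loop, so the dressing action is equivariant with respect to the rotation and $\hat F$ inherits the cyclic symmetry of order three. Finally, as $a$ varies the Delaunay end eigenvalues vary and are preserved under dressing, so the equilateral minimal trinoids in $\spaceH$ obtained in this way form a genuine real one-parameter family for each $\abs{H}<1$, as claimed.

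The step I expect to be the main obstacle is controlling the global behavior of $\hat F$: the algebraic dressing formula $g\dress F=gFk^{-1}g^{-1}$ of Lemma~\ref{lem:simple-factor-dressing-swap} degenerates exactly along the real-analytic set on which $F^{-1}(\mu)\,\bigl[\begin{smallmatrix}1\\0\end{smallmatrix}\bigr]$ becomes null for the indefinite Hermitian form, and there the $\spaceH$-immersion reaches the ideal boundary. One then has to check --- as was done above for the hyperbolic disk and the Delaunay cylinders --- that the surface extends across these curves to a conformal immersion of the whole punctured sphere into $\mathcal L\cong\spaceS$, using the rational-in-$\lambda$ form of $\hat F$ together with~\eqref{eq:lightconematrix}. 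By contrast, the real-form bookkeeping for the passage $\spaceDS\to\spaceH$ needs no further work, being already carried out in Lemma~\ref{lem:simple-factor-dressing-swap}.
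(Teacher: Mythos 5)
Your overall route is the paper's route: produce the equilateral family in $\spaceDS$ from theorem~\ref{thm:trinoid} (your sign computation $c=\tfrac{3\pi^4}{64}a^4>0$, forcing $\delta\epsilon=+1$, is exactly why the equilateral case lands in $\spaceDS$ and not $\spaceH$), and then move to $\spaceH$ by dressing with a diagonal simple factor via lemma~\ref{lem:simple-factor-dressing-swap}. However, there is a genuine gap at the extrinsic closing step. You assert that ``dressing by $g$ conjugates each end monodromy,'' with the only constraint on $g$ being $\abs{\mu}>1$ so that $g$ is regular at the \sym points. That is not true for a generic choice of $\mu$: writing $g\dress F = gFk^{-1}g^{-1}$ as in lemma~\ref{lem:simple-factor-dressing-swap}, the factor $k$ is built from $F^{-1}(\mu)\ell$, and under a deck transformation $F\mapsto M F$ this vector changes by $M^{-1}(\mu)$. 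Only if $\ell$ is an eigenvector of $M(\mu)$ does $k$ transform trivially, and only then is the monodromy of the dressed frame $gMg^{-1}$; otherwise the dressed monodromy acquires an extra, $\mu$-dependent factor and the condition $M(\lambda_0)=M(\lambda_1)=\pm\id$ is generically destroyed, so your appeal to conjugation invariance does not apply. Since this must hold simultaneously for both generators $M_0,M_1$, one needs $\ell$ to be a \emph{common} eigenvector of the monodromy group at $\lambda=\mu$, which forces the representation to be reducible there. This is precisely what the paper arranges: $\mu$ is chosen in the zero set of the trace polynomial $\varphi$ of~\eqref{eq:phi} (a discrete set accumulating at $0$ and $\infty$), away from the \sym points, and $\ell$ is taken to be the common eigenline, in analogy to~\cite{Kilian_Schmitt_Sterling_2004}; this is the step your proposal is missing, and without it the second half of the argument does not go through.

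Two smaller points. Your equivariance argument for the order-three symmetry is a reasonable supplement (the paper does not spell this out), but note that the symmetry of the dressed frame again hinges on how $k$ transforms under the symmetry, so it too uses the eigenline choice of $\mu$, not merely that $g$ is diagonal. Finally, your closing worry about where the Iwasawa/dressing formula degenerates and the surface meets the ideal boundary is consistent with the paper: the theorem only claims a trinoid frame satisfying the closing conditions, and the global domain question is addressed separately (remark~\ref{rem:kobayashi} and theorem~\ref{thm:main} for small necksizes), so no further work is required there for this statement.
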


\begin{proof}
Let $F$ be a unitary frame for a trinoid in the $1$-parameter family
of equilateral trinoids in $\spaceDS$ with specified mean curvature
$\abs{H}<1$ (theorem~\ref{thm:trinoid}).
The dressed frame $g\dress F$ is in the real form for $\spaceH$
(lemma~\ref{lem:simple-factor-dressing-swap})
so it satisfies the intrinsic closing condition for $\spaceH$.

To satisfy the extrinsic closing condition we choose the simple factor $g$
as follows.
The subset of the $\lambda$ plane along which the
monodromy group of $F$ is reducible is the zero set of the trace polynomial
$\varphi$~\eqref{eq:phi}.
With $t$ the halftrace of the monodromy of $F$ around each end,
we have $\varphi= {(1-t)}^2(1+2t)$ with zero set
\begin{equation}
  \big\{
  \mu\in\bbC^\ast\suchthat \half -\half \sqrt{ 1 + 4 q \mu^{-1}f(\mu)} \in\bbZ/3
    \big\},\quad \text{$f$ as in~\eqref{eq:trinoid-f2}},
    \end{equation}
a discrete subset of $\bbC^\ast$
which accumulates at $0$ and $\infty$.
Using~\eqref{eq:trinoid-monodromy} and~\eqref{eq:phi-in-terms-of-monodromy},
$\mu$ can be found
in this zero set away from the \sym points
such that $\ell$ is a common eigenvalue
of the monodromy group. This choice of $\mu$ implies that
the monodromy of $g\dress F$ is $gMg^{-1}$,
where $M$ is a monodromy of $F$
(in analogy to~\cite{Kilian_Schmitt_Sterling_2004} for noids in $\bbR^3$).
Hence
$g\dress F$ satisfies the extrinsic closing conditions for $\spaceH$,
so it is the frame for an equilateral trinoid in $\spaceH$.
\end{proof}

\begin{remark}
  \label{rem:kobayashi}
  \mbox{}

  The \DPW construction of trinoids (theorem~\ref{thm:trinoid})
  is as follows:
  \begin{enumerate}
  \item
    In terms of a potential~\eqref{eq:trinoid-potential}
    on the three-punctured sphere,
    compute a dressing $C$ which unitarizes the
    monodromy group of the corresponding holomorphic frame $\Phi$
    based at $\Phi(z_0)=\id$.
  \item
    Show that the dressed holomorphic frame
    $C\Phi$ is in the big cell of the Iwasawa decomposition
    in some subregion of the domain.
  \item
    Construct a CMC immersion of that subregion into $\spaceH$
    via the \sym formula~\eqref{eq:sym} applied
    to the unitary Iwasawa factor of $C\Phi$.
  \end{enumerate}

  Following this program with gauge equivalent trinoid potentials,
  Kobayashi in \cite[Theorem 4.1]{Kobayashi_2010}
  gives an incomplete construction of equilateral trinoids in $\spaceH$
  with mean curvature $\abs{H}< 1$, failing to address step (2).
  Our theorem~\ref{thm:trinoid-h3} fills this gap
  by constructing a dressing $C$ in the loop group $\Lambda_+\matSL_2(\bbC)$.
  Thus $C\Phi$ is in the big cell of the Iwasawa decomposition
  at, and hence in a neighborhood of the basepoint $z_0$. Moreover, it follows from Theorem \ref{thm:main}  that for small necksizes $C\Phi$ is in the big cell of the Iwasawa decomposition on the 3-holed sphere obtained by removing  3 discs around the punctures of the 3-punctured sphere.

  Dorfmeister, Inoguchi and Kobayashi further state without proof
  in~\cite[\S 10.5]{Dorfmeister_Inoguchi_Kobayashi_2014}
  that the theorem of Kobayashi mentioned
  above constructs CMC immersions of the three-punctured sphere into $\spaceH$.
  On the contrary, in light of the numerical evidence of
  figures~\ref{fig:noid-h3},~\ref{fig:noid-h3-iso},~\ref{fig:trinoid-h3-halfspace}
  and~\ref{fig:trinoid}, we conjecture that
  in analogy to 2-noids (Delaunay surfaces), the incompletely constructed
  trinoids in \cite{Kobayashi_2010}
  map the three-punctured sphere not into
  $\spaceH$ but into $\spaceH\cup\Stwo\cup\spaceH$.
\end{remark}

\providecommand{\bysame}{\leavevmode\hbox to3em{\hrulefill}\thinspace}
\providecommand{\MR}{\relax\ifhmode\unskip\space\fi MR }
\providecommand{\MRhref}[2]{%
  \href{http://www.ams.org/mathscinet-getitem?mr=#1}{#2}
}
\providecommand{\href}[2]{#2}

\end{document}